\crefname{hypothesis}{Hypothesis}{Hypotheses}
\title{Dirichlet-Neumann learning algorithm for solving elliptic interface problems\thanks{Submitted to the editors DATE.
\funding{Qi Sun is supported in part by the National Natural Science Foundation of China under Grant 12201465.}}}
\author{Qi Sun\thanks{School of Mathematical Sciences, Tongji University, Shanghai 200092, China
(\email{qsun\_irl@tongji.edu.cn}, \email{2111166@tongji.edu.cn}).}
\and Xuejun Xu\footnotemark[2]$\,\,^,$\thanks{Institute of Computational Mathematics, AMSS, Chinese Academy of Sciences, Beijing 100190, China (\email{xxj@lsec.cc.ac.cn}).}
\and Haotian Yi\footnotemark[2]}
\begin{document}

\maketitle

\begin{abstract}
Non-overlapping domain decomposition methods are natural for solving interface problems arising from various disciplines, however, the numerical simulation requires technical analysis and is often available only with the use of high-quality grids, thereby impeding their use in more complicated situations. To remove the burden of mesh generation and to effectively tackle with the interface jump conditions, a novel mesh-free scheme, \textit{i.e.}, Dirichlet-Neumann learning algorithm, is proposed in this work to solve the benchmark elliptic interface problem with high-contrast coefficients as well as irregular interfaces. By resorting to the variational principle, we carry out a rigorous error analysis to evaluate the discrepancy caused by the boundary penalty treatment for each decomposed subproblem, which paves the way for realizing the Dirichlet-Neumann algorithm using neural network extension operators. The effectiveness and robustness of our proposed methods are demonstrated experimentally through a series of elliptic interface problems, achieving better performance over other alternatives especially in the presence of erroneous flux prediction at interface.
\end{abstract}

\begin{keywords}
Elliptic interface problem, discontinuous coefficients, compensated deep Ritz method, artificial neural networks
\end{keywords}

\begin{MSCcodes}
65M55, 65Nxx, 92B20, 49S05
\end{MSCcodes}

\section{Introduction}

Many problems in science and engineering are carried out with domains separated by curves or surfaces, \textit{e.g.}, the abrupt changes in material properties between adjacent regions, from which the interface problems naturally arise. A widely studied benchmark example is the elliptic interface problem with high-contrast coefficients \cite{li2006immersed,chen1998finite,leveque1994immersed}, whose solution lies in the Sobolev space $H^{1+\epsilon}(\Omega)$ with $\epsilon>0$ possibly close to zero \cite{mercier2003minimal}. Due to the low regularity of solution at the interface, classical numerical methods, such as finite difference and finite element methods  \cite{leveque2007finite,brenner2008mathematical}, require the generation of an interface-fitted mesh in the discretization of the computation domain \cite{chen1998finite}, which could be technically involved and time consuming especially when the geometry of interface gets complicated. To ease the burden of manual mesh construction, numerical methods based on unfitted meshes, \textit{e.g.}, the immersed interface method \cite{leveque1994immersed} and many others, have emerged as an attractive alternative for solving the elliptic interface problems \cite{li2006immersed}. However, using unfitted mesh, \textit{e.g}, a uniform Cartesian mesh, often makes it difficult to enforce the jump conditions across subdomain interfaces accurately \cite{chen2017interface,he2022mesh}. Although both types of methods have shown to be effective to some extent, their practical implementation is not an easy task due to the complex geometry of interface and the discontinuity of solution.

\begin{figure}[t!]
\centering
\includegraphics[width=0.835\textwidth]{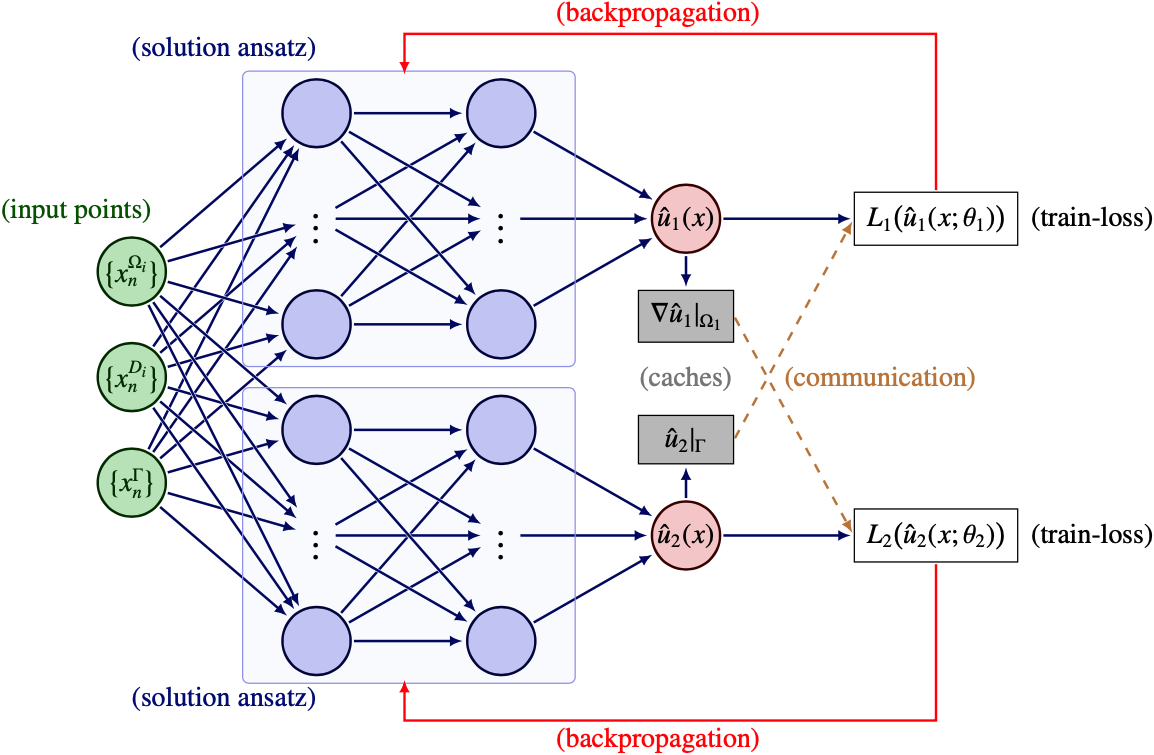}
\vspace{-0.2cm}
\caption{Computational graph of our proposed Dirichlet-Neumann learning algorithm.}
\label{fig-computation-graph}
\vspace{-0.45cm}
\end{figure}

Note that the entire computation domain has already been separated as the union of multiple non-overlapping subdomains, each of which corresponds to a local boundary value problem after endowing the subdomain interface with an appropriate boundary condition \cite{quarteroni1999domain}. As a result, the elliptic interface problem being considered can also be tackled by a non-overlapping Dirichlet-Neumann algorithm at the continuous level \cite{yang1997parallel,yang2000finite,quarteroni1999domain}, where the decomposed subproblems are typically solved using the mesh-based numerical methods \cite{leveque2007finite,brenner2008mathematical,leveque2002finite}. However, the complex geometry of subdomain interfaces remains a major concern during the meshing process. Fortunately, the domain decomposition methods \cite{toselli2004domain,quarteroni1999domain,dolean2015introduction} are essentially continuous schemes that greatly differ from the aforementioned interface-fitted or -unfitted methods, making it computationally feasible to adopt the mesh-free deep learning techniques \cite{goodfellow2016deep,karniadakis2021physics,yu2018deep} as the local problem solvers \cite{heinlein2021combining}. Thanks to the rapid development of artificial intelligence science, much attention has recently been paid to combining deep learning with insight from the domain decomposition methods. The physics-informed neural networks \cite{raissi2019physics,karniadakis2021physics,lagaris2000neural,lagaris1998artificial}, among others \cite{yu2018deep,zang2020weak,sirignano2018dgm}, has been utilized to discretize and solve the Dirichlet and Neumann subproblems within the classical Dirichlet-Neumann algorithm framework \cite{li2020deep}, which is named ``DeepDDM'' and applied to a simple interface problem as a proof of concept. To further enhance its scalability properties, the DeepDDM method is extended with the aid of coarse space correction \cite{mercier2021coarse,quarteroni1999domain}. However, these works fail to address the impact of erroneous flux prediction \cite{dockhorn2019discussion,bajaj2021robust} at interface, which can degrade the overall performance \cite{sun2022domain}. Note that in the degenerate case of homogeneous jump conditions, the continuity of averaged solution between neighbouring subdomains, as well as its first and higher-order derivatives, are explicitly enforced through additional penalty terms in a series of papers \cite{jagtap2020extended,jagtap2020conservative,shukla2021parallel,wu2022improved,hu2021extended}, which also suffer from the issue of erroneous flux predictions. Worse still, these strategies can not be applied to resolving discontinuous solutions, where the higher-order derivatives of network solutions at interface could be meaningless due to the lack of global regularity \cite{mercier2003minimal}. Designing specific network architectures is another way of dealing with the complex geometry and jump condition of elliptic interface problems \cite{lai2022shallow,hu2022discontinuity,tseng2022cusp,hu2022hybrid,dong2021local}, \textit{e.g.}, augmenting an additional coordinate variable as a feature input of the solution ansatz \cite{lai2022shallow}, replacing neural network structures with extreme learning machines \cite{dong2021local,dwivedi2021distributed} or graph neural networks \cite{taghibakhshi2022learning}, while the theoretical study is still in need to gain a deeper understanding.

In contrast to the existing methods that require an explicit computation of the flux data along subdomain interfaces, a novel Dirichlet-Neumann learning algorithm using neural network extension operators (see \autoref{fig-computation-graph}) is proposed in this work for solving the elliptic interface problems with high-contrast coefficients as well as irregular interfaces. By resorting to the variational principle and applying the Green's formula to Dirichlet-Neumann algorithm \cite{quarteroni1999domain,yang1997parallel,yang2000finite}, a rigorous error analysis is first established to estimate the discrepancies caused by the penalty treatment\footnote{The boundary conditions are included as ``soft'' penalty terms in the training loss function.} of boundary conditions \cite{jiao2021error,duan2021analysis,muller2022error}, which paves the way for constructing the training loss functions for each decomposed subproblem. As a direct result, the flux transmission between neighbouring subdomains is realized without explicitly computing the derivatives of network solution at interface (see also \autoref{fig-computation-graph}) and thereby alleviating the deterioration of outer iterations in the presence of erroneous flux predictions. Moreover, a comparison study is presented to demonstrate the robustness and effectiveness of our proposed methods over the DeepDDM scheme \cite{li2020deep}, followed by a series of numerical examples to validate our statements.

The rest of this paper is organized as follows. In \autoref{Section-Preliminaries}, we begin by recalling the Dirichlet-Neumann algorithm for solving elliptic interface problems in the continuous level, then the numerical methods, \textit{i.e.}, mesh-based and mesh-free solvers, are briefly reviewed and compared. To combine domain decomposition methods with ideas from machine learning in a consistent manner, a rigorous error analysis of boundary penalty method for both the Dirichlet and Neumann subproblems is presented in \autoref{Section-Method}, followed by the implementation details of our proposed Dirichlet-Neumann learning algorithm. Then, the numerical results on a series of benchmark problems are reported in \autoref{Section-Experiment}. Finally, we summarize our work in \autoref{Section-Conlcusion}.


\section{Preliminaries}\label{Section-Preliminaries}

\subsection{Elliptic Interface Problem with High-Contrast Coefficients}\label{Section-IntfcProb}

\begin{figure}[t!]
\centering
\includegraphics[width=0.65\textwidth]{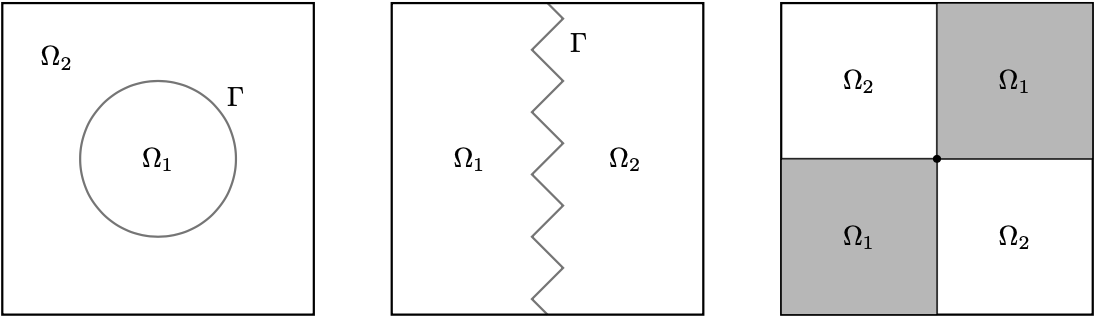}
\vspace{-0.2cm}
\caption{A Lipschitz domain $\Omega\subset\mathbb{R}^2$ that is decomposed into several non-overlapping subdregions.}
\label{fig-domain-decomposition}
\vspace{-0.3cm}
\end{figure}

Let $\Omega\subset\mathbb{R}^d$ be a bounded domain with Lipschitz boundary $\partial\Omega$, which is first assumed to be partitioned into two non-overlapping subdomains as depicted in \autoref{fig-domain-decomposition}, that is,
\begin{equation*}
	\overline{\Omega} = \overline{\Omega_1\cap\Omega_2},\ \ \ \Omega_1\cap\Omega_2=\emptyset,\ \ \ \Gamma=\partial\Omega_1\cap\partial\Omega_2.
\end{equation*}
Then, we consider the elliptic interface problem with high-contrast coefficients and natural jump conditions, which is often formally written as \cite{li2006immersed}
\begingroup
\renewcommand*{\arraystretch}{1.1}
\begin{equation}
\begin{array}{cl}
-\nabla \cdot \left( c(x) \nabla u(x)  \right) + u(x) = f(x)\ \ & \text{in}\ \Omega,\\
u(x) = 0\ \ & \text{on}\ \partial \Omega, \\
\llbracket u(x) \rrbracket = 0\ \ \text{and}\ \ \llbracket c(x) \nabla u(x)\cdot\bm{n} \rrbracket = q(x) \ \ & \text{on}\ \Gamma,
\end{array}
\label{IntfcProb-StrongForm}
\end{equation}
\endgroup
where $f(x)$ represents a given function of $L^2(\Omega)$, $\bm{n}=\bm{n}_2$ ($\bm{n}_1$) the unit outer normal vector for subdomain $\Omega_2$ ($\Omega_1$) and notation $\llbracket \cdot \rrbracket$ the difference of quantity across the interface, namely, for any point $X\in \Gamma$,
\begingroup
\renewcommand*{\arraystretch}{1.3}
\begin{equation}
\begin{array}{c}
\lim\limits_{x\rightarrow X,\, x\in \Omega_1} u(x) = \lim\limits_{x\rightarrow X,\, x\in \Omega_2} u(x), \\
-\lim\limits_{x\rightarrow X,\, x\in \Omega_1} c(x) \nabla u(x)\cdot \bm{n}_1 - \lim\limits_{x\rightarrow X,\, x\in \Omega_2} c(x) \nabla u(x)\cdot\bm{n}_2 = q(X).
\end{array}
\end{equation}
\endgroup
Notably, $c(x)$ is a piecewise constant function that has a finite jump of function value across the interface $\Gamma$, that is,
\begingroup
\renewcommand*{\arraystretch}{1.1}
\begin{equation*}
c(x) = \left\{
\begin{array}{cl}
c_1>0\ \ & \text{in}\ \Omega_1,\\
c_2\gg c_1\ \ & \text{in}\ \Omega_2.
\end{array}\right.
\end{equation*}
\endgroup
which is typically caused by the abrupt changes in material properties or the interaction of fluid dynamics \cite{li2006immersed}. More broadly, the regular coefficients $c_1\approx c_2>0$ or $c_1=c_2$ can be regarded as a degenerate case of problem \eqref{IntfcProb-StrongForm}.

By setting $V_i = \big\{ v_i\in H^1(\Omega_i)\, \big|\, v_i|_{\partial\Omega\cap\partial\Omega_i} = 0 \big\}$, $V_i^0=H_0^1(\Omega_i)$, and defining 
\begin{equation*}
	b_i(u_i, v_i) = \int_{\Omega_i} c_i \nabla u_i \cdot \nabla v_i + u_i v_i \,dx,\ \ (f, v_i)_i = \int_{\Omega_i} f v_i\, dx,\ \ (q,v_2)_{L^2(\Gamma)} = \int_\Gamma qv\,ds,
\end{equation*}
for $i=1$, 2, the Green's formula implies that the weak formulation of \eqref{IntfcProb-StrongForm} reads: find $u_1\in V_1$ and $u_2\in V_2$ such that
\begingroup
\renewcommand*{\arraystretch}{1.1}
\begin{equation}
\begin{array}{cl}
b_1(u_1, v_1) = (f, v_1)_1 & \text{for any}\ v_1\in V_1^0,\\
u_1 = u_2 & \text{on}\ \Gamma, \\
b_2(u_2, v_2) \!=\! (f, v_2)_2 \!+\! (f, R_1\gamma_0 v_2)_1 \!-\! b_1(u_1, R_1\gamma_0 v_2) \!-\! (q,v_2)_{L^2(\Gamma)} & \text{for any}\ v_2\in V_2,
\end{array}
\label{IntfcProb-WeakForm}
\end{equation}
\endgroup
where $\gamma_0 v= v|_\Gamma$ is the restriction of $v\in H^1(\Omega_i)$ on $\Gamma$ and $R_i: H_{00}^{\frac12}(\Gamma)\to V_i$ any differentiable extension operator \cite{quarteroni1999domain}. 

As a direct result, by employing a suitable relaxation parameter $\rho\in (0,\rho_{\textnormal{max}})$ \cite{na2022domain,gander2015optimized}, an iterative scheme (also known as the Dirichlet-Neumann algorithm \cite{yang1997parallel,quarteroni1999domain}) can be developed for solving the elliptic interface problem \eqref{IntfcProb-WeakForm}: given the initial guess of the unknown solution value at interface $u_\Gamma^{[0]}\in H_{00}^{\frac12}(\Gamma)$, then solve for $k\geq 0$, 
\begin{itemize}[leftmargin=*]
\item[1)] $\displaystyle u_1^{[k]} = \operatorname*{arg\,min}_{u_1\in V_1,\, u_1|_\Gamma = u_\Gamma^{[k]}}  \frac12  b_1(u_1, u_1) - (f, u_1)_1$\hfill\refstepcounter{equation}\textup{(\theequation)}\label{IntfcProb-DN-DirichletSubProb-RitzForm}
\item[2)] $\displaystyle u_2^{[k]} = \operatorname*{arg\,min}_{u_2\in V_2} \frac12 b_2(u_2, u_2) - (f, u_2)_2 + b_1(u_1^{[k]}, R_1\gamma_0 u_2) - (f, R_1\gamma_0 u_2)_1 + (q,u_2)_{L^2(\Gamma)}$\hfill\refstepcounter{equation}\textup{(\theequation)}\label{IntfcProb-DN-NeumannSubProb-RitzForm}%
\item[3)] $\displaystyle u_\Gamma^{[k+1]} = \rho u_2^{[k]} + (1-\rho) u_\Gamma^{[k]}\ \ \ \text{on}\ \Gamma$\hfill\refstepcounter{equation}\textup{(\theequation)}\label{IntfcProb-DN-Update}
\vspace{0.4em}
\end{itemize}
until certain stopping criteria are met \cite{toselli2004domain}. It is noteworthy that in contrast to the strong form \eqref{IntfcProb-StrongForm}, the jump conditions are enforced without explicitly computing and exchanging the interface flux of local solutions \cite{sun2022domain}. 

\begin{remark}
Although the solution $u(x)$ of interface problem \eqref{IntfcProb-StrongForm} lies in the space $H^{1+\epsilon}(\Omega)$ with $\epsilon>0$ possibly close to zero \cite{mercier2003minimal}, the decomposed solutions $u_1^{[k]}$, $u_2^{[k]}$ of subproblems \eqref{IntfcProb-DN-DirichletSubProb-RitzForm}, \eqref{IntfcProb-DN-NeumannSubProb-RitzForm} are regular under mild geometric assumptions on the boundary \cite{evans2010partial,gilbarg1977elliptic}. More precisely, it is assumed that the geometries of subdomains $\Omega_1$, $\Omega_2$ are smooth enough to ensure the regularity $u_1^{[k]}\in H^2(\Omega_1)$, $u_2^{[k]}\in H^2(\Omega_2)$ of the decomposed solutions for error estimates in \autoref{Section-Method}.
\end{remark}


\subsection{Related Work}\label{Section-RelatedWork}

Traditional numerical methods for solving the elliptic interface problems \eqref{IntfcProb-StrongForm} can be roughly categorized into two mainstreams by using either an interface-fitted or -unfitted mesh in the discretization of the domain. Provided a mesh that aligns exactly with the interface, the interface jump conditions can be naturally absorbed into a finite element formulation \cite{brenner2008mathematical}, achieving high accuracy with nearly optimal error bounds \cite{chen1998finite}. However, a major drawback is the requirement of an interface-fitted mesh generator, which could be time consuming and technically involved for complicated interface geometries in two and higher dimensions. As such, the second type of methods use interface-unfitted meshes (\textit{e.g.}, the Cartesian mesh) and have attracted enormous attention due to the easiness of mesh generation. A rich literature can be found in this direction including, but not limited to immersed interface methods \cite{leveque1994immersed}, extended finite element methods \cite{fries2010extended}, and many others. However, there can be significant difficulties in enforcing jump conditions across the complicated interfaces \cite{chen2017interface}, as well as a rigorous error analysis that is typically an ad hoc.

On the other hand, the Dirichlet-Neumann algorithm (\ref{IntfcProb-DN-DirichletSubProb-RitzForm}, \ref{IntfcProb-DN-NeumannSubProb-RitzForm}, \ref{IntfcProb-DN-Update}) illustrated in section \ref{Section-IntfcProb} is essentially a continuous method for solving the elliptic interface problem \eqref{IntfcProb-StrongForm}, thereby making it feasible to integrate with techniques from the deep learning community \cite{goodfellow2016deep,karniadakis2021physics}. Consequently, one of the most straightforward learning approach \cite{li2020deep} is based on the Dirichlet-Neumann algorithm written in terms of differential operators \cite{toselli2004domain}, namely, for $k\geq 1$,
\begingroup
\renewcommand*{\arraystretch}{1.3}
\begin{equation}
\begin{array}{cl}
\textnormal{Dirichlet subproblem} \!\!&\!\! \left\{
\begin{array}{cl}
-\nabla \cdot ( c_1 \nabla u_1^{[k]} ) + u_1^{[k]} = f & \text{in}\ \Omega_1,\\
u_1^{[k]} = 0 &  \text{on}\ \partial \Omega_1\setminus\Gamma, \\
\!\! u_1^{[k]} = \rho u_2^{[k-1]} + (1-\rho) u_1^{[k-1]} & \text{on}\ \Gamma,
\end{array}\right.
\\ 
\textnormal{Neumann subproblem} \!\!&\!\! \left\{
\begin{array}{cl}
-\nabla \cdot ( c_2 \nabla u_2^{[k+1]} ) + u_2^{[k+1]} = f & \text{in}\ \Omega_2,\\
u_2^{[k+1]} = 0 & \text{on}\ \partial \Omega_2\setminus\Gamma, \\
\!\! c_2 \nabla u_2^{[k+1]} \cdot \bm{n}_2 = -q - c_1 \nabla u_1^{[k]}\cdot\bm{n}_1 & \text{on}\ \Gamma,
\end{array}\right.
\end{array}
\label{IntfcProb-DN-StrongForm}
\end{equation}
\endgroup
where all the decomposed subproblems in \eqref{IntfcProb-DN-StrongForm} are approximately solved using the physics-informed neural networks \cite{raissi2019physics} or the deep Ritz method \cite{yu2018deep}. However, though the training loss tends to decrease as the iteration proceeds, the trained network solutions are prone to returning erroneous flux prediction along the interfaces \cite{dockhorn2019discussion,wang2021understanding,bajaj2021robust}, thereby hampering the convergence of outer iteration due to the incorrect transmission conditions being posed (see remark \ref{Remark-Err-Intfc} or \autoref{Section-Experiment}). It is also interesting to note that similar idea has been applied in areas of overlapping domain decomposition \cite{li2019d3m,li2020deep,sheng2022pfnn,li2022deep}, which achieves empirical success to a certain extent since the updated interface conditions are taken from the interior of neighbouring subdomains rather than its boundaries. Unfortunately, these overlapping methods face difficulties in handling problems with large jumps in the coefficient or discontinuous solutions.

Besides the meshless method mentioned above, recent advances in scientific machine learning \cite{baker2019workshop} have also led to an increasing interest in solving elliptic interface problems \eqref{IntfcProb-StrongForm} using various neural network models. For instance, by removing the inhomogeneous boundary conditions, a deep Ritz-type approach \cite{yu2018deep} is developed in \cite{wang2020mesh} for solving the interface problem using one single neural network. Based on the observation that the solution of interface problem is typically piecewise-continuous, an improved learning method is to employ a piecewise neural network to approximate the solution of \eqref{IntfcProb-StrongForm} on multiple subdomains, where the training loss function could be built from either the least square principle \cite{he2022mesh} or the variational formulation \cite{guo2021deep} of the underlying system \eqref{IntfcProb-StrongForm}. In addition, setting appropriate penalty weights among various loss terms is a crucial but painstaking process, which can be mitigated by assigning adaptive weights to different loss terms \cite{wu2022inn,wang2021understanding}. Note that in the case of homogeneous interface jump conditions, the continuity of averaged solution between neighbouring subdomains, as well as its first and higher-order derivatives \cite{jagtap2020extended,jagtap2020conservative,shukla2021parallel,wu2022improved,hu2021extended}, can be explicitly enforced through additional penalty terms posed on the interfaces. This family of methods are quite general and parallelizable, however, high-order derivatives at interface are meaningless for low-regularity solutions and no domain knowledge of the underlying equation is used during the training mode, leading to large errors near the interface that degrade the overall performance \cite{hu2022augmented}. Unfortunately, all these algorithms fail to address the issue of erroneous flux estimation at interface, which is the focus of this work. Another way of dealing with the jump conditions is to design specific network architectures that can capture the jump discontinuity across subdomain interfaces \cite{lai2022shallow,hu2022discontinuity,tseng2022cusp,hu2022hybrid}, while the theoretical work will be needed to obtain a deeper understanding.
 

\section{Method}\label{Section-Method}

In this section, we start with the convergence analysis of our proposed Dirichlet-Neumann learning algorithm for solving the elliptic interface problem \eqref{IntfcProb-StrongForm} in the ``continuum-level'', which offers a novel theoretical understanding of how to combine deep learning models \cite{raissi2019physics,yu2018deep,sirignano2018dgm} with ideas from domain decomposition methods \cite{quarteroni1999domain,toselli2004domain}. Then, the implementation details for realizing our proposed method is illustrated and summarized (see also \autoref{fig-computation-graph}).

Note that the convergence results of the iterative sequence of subdomain solutions (\ref{IntfcProb-DN-DirichletSubProb-RitzForm}, \ref{IntfcProb-DN-NeumannSubProb-RitzForm}, \ref{IntfcProb-DN-Update}) have been well-studied at the continuous level \cite{yang1997parallel}, it is then sufficient to show that these iterative solutions could be accurately approximated using modern deep learning techniques.

\subsection{Error Estimates}

To realize the iterative schemes (\ref{IntfcProb-DN-DirichletSubProb-RitzForm} -- \ref{IntfcProb-DN-Update}) using neural networks \cite{goodfellow2016deep}, the essential boundary conditions are treated in a ``soft'' manner by modifying the energy functional with boundary penalty terms \cite{yu2018deep,karniadakis2021physics,sirignano2018dgm}, namely,
\begin{equation}
	\hat{u}_1^{[k]} = \operatorname*{arg\,min}_{\hat{u}_1 \in H^1(\Omega_1) } \frac12  b_1(\hat{u}_1, \hat{u}_1) - (f, \hat{u}_1)_1 + \frac{\beta_D}{2} \left( \lVert \hat{u}_1 \rVert_{L^2(\partial\Omega_1\cap\partial\Omega)}^2 + \lVert \hat{u}_1 - u_\Gamma^{[k]} \rVert_{L^2(\Gamma)}^2 \right)
	\label{IntfcProb-DN-DirichletSubProb-RitzForm-BndryPenalty}	
\end{equation}
for the Dirichlet subproblem \eqref{IntfcProb-DN-DirichletSubProb-RitzForm} at the $k$-th outer iteration, where $\beta_D>0$ is a user-defined penalty coefficient \cite{wang2021understanding}. On the other hand, by extending the local solution $u_2\in V_2$ of decomposed problem \eqref{IntfcProb-DN-NeumannSubProb-RitzForm} to its neighboring subdomain (not relabelled),
\begin{equation}
	R_1\gamma_0 u_2(x) = u_2(x) \in V_1 
	\label{Extension-ContinuousLevel}
\end{equation}
the modified loss functional associated with the Neumann subproblem \eqref{IntfcProb-DN-NeumannSubProb-RitzForm} gives
\begin{equation}
\begin{split}
	\hat{u}_2^{[k]}  = \operatorname*{arg\,min}_{\hat{u}_2\in H^1(\Omega)} & \frac12 b_2(\hat{u}_2, \hat{u}_2) - (f, \hat{u}_2)_2 + b_1(\hat{u}_1^{[k]}, \hat{u}_2) - (f, \hat{u}_2)_1 + \frac{\beta_N}{2} \lVert \hat{u}_2 \rVert_{L^2(\partial\Omega)}^2 \\
	& + (q,\hat{u}_2)_{L^2(\Gamma)} 
\end{split}	
	\label{IntfcProb-DN-NeumannSubProb-RitzForm-BndryPenalty}	
\end{equation}
where $\beta_N>0$ denotes another penalty coefficient. It is noteworthy that the minimizer of functional \eqref{IntfcProb-DN-NeumannSubProb-RitzForm-BndryPenalty} is now defined globally over the whole domain, which differs from the traditional mesh-based treatment \cite{toselli2004domain}. 

Before introducing the neural network parametrization of the unknown solutions in \eqref{IntfcProb-DN-DirichletSubProb-RitzForm-BndryPenalty} and \eqref{IntfcProb-DN-NeumannSubProb-RitzForm-BndryPenalty}	, the error estimations induced by the relaxation from exact boundary conditions to a penalization-based approach are established in what follows.

\begin{theorem}
	Let $u_1^{[k]}$ be the solution of problem \eqref{IntfcProb-DN-DirichletSubProb-RitzForm} and $\hat{u}_1^{[k]}$ the solution of problem \eqref{IntfcProb-DN-DirichletSubProb-RitzForm-BndryPenalty}, then there holds
\begin{equation}
		\lVert \hat{u}_1^{[k]} - u_1^{[k]} \rVert_{H^1(\Omega_1)} \leq C(\Omega_1,u_1^{[k]}) \frac{c_1}{\beta_D} \sqrt{\frac{\hat{c}_1}{\check{c}_1}}
\end{equation}
where $\hat{c}_1=\max\{c_1,1\}$, $\check{c}_1=\min\{c_1,1\}$, and $C(\Omega_1,u_1^{[k]})$ represents a generic constant that depends on the subdomain $\Omega_1$ and the solution $u_1^{[k]}$ of Dirichlet subproblem \eqref{IntfcProb-DN-DirichletSubProb-RitzForm}.
\label{IntfcProb-DN-DirichletSubProb-ErrAnlys}
\end{theorem}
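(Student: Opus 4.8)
The plan is to compare the Euler--Lagrange (variational) characterizations of the constrained minimizer $u_1^{[k]}$ and the penalized minimizer $\hat{u}_1^{[k]}$, and to estimate their difference $e := \hat{u}_1^{[k]} - u_1^{[k]}$ in the energy norm induced by $b_1$. First I would record the two weak equations. The penalized problem \eqref{IntfcProb-DN-DirichletSubProb-RitzForm-BndryPenalty} has the first-order optimality condition $b_1(\hat{u}_1^{[k]}, v) + \beta_D\big(\langle \hat{u}_1^{[k]}, v\rangle_{L^2(\partial\Omega_1\cap\partial\Omega)} + \langle \hat{u}_1^{[k]} - u_\Gamma^{[k]}, v\rangle_{L^2(\Gamma)}\big) = (f,v)_1$ for all $v\in H^1(\Omega_1)$. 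For the exact problem I would invoke the stated regularity $u_1^{[k]}\in H^2(\Omega_1)$, so that the strong form $-\nabla\cdot(c_1\nabla u_1^{[k]}) + u_1^{[k]} = f$ holds and Green's formula gives $b_1(u_1^{[k]}, v) = (f,v)_1 + c_1\langle \nabla u_1^{[k]}\cdot\bm{n}, v\rangle_{L^2(\partial\Omega_1)}$ for all $v\in H^1(\Omega_1)$. Subtracting the two, and using the exact boundary values $u_1^{[k]} = 0$ on $\partial\Omega_1\cap\partial\Omega$ and $u_1^{[k]} = u_\Gamma^{[k]}$ on $\Gamma$ to turn the penalty terms into traces of $e$, produces the error equation
$$ b_1(e, v) + \beta_D\langle e, v\rangle_{L^2(\partial\Omega_1)} = -\,c_1\langle \nabla u_1^{[k]}\cdot\bm{n}, v\rangle_{L^2(\partial\Omega_1)}\qquad\forall\, v\in H^1(\Omega_1). $$

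Two consequences I would extract immediately. Testing with $v = e$ gives the energy identity $b_1(e,e) + \beta_D\lVert e\rVert_{L^2(\partial\Omega_1)}^2 = -c_1\langle\nabla u_1^{[k]}\cdot\bm{n}, e\rangle_{L^2(\partial\Omega_1)}$; dropping the nonnegative term $b_1(e,e)$ and applying Cauchy--Schwarz yields the boundary bound $\lVert e\rVert_{L^2(\partial\Omega_1)}\le (c_1/\beta_D)\lVert\nabla u_1^{[k]}\cdot\bm{n}\rVert_{L^2(\partial\Omega_1)}$, which already carries the desired $1/\beta_D$ factor. Testing instead with $v\in H_0^1(\Omega_1)$ makes both the right-hand side and the penalty term vanish, so $b_1(e,v)=0$ for all $v\in H_0^1(\Omega_1)$; hence $e$ is the $b_1$-harmonic (minimal-energy) extension of its own boundary trace, i.e. $b_1(e,e) = \min\{\, b_1(w,w): w\in H^1(\Omega_1),\ w|_{\partial\Omega_1} = e|_{\partial\Omega_1}\,\}$.

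The final step combines these. Using coercivity $b_1(e,e)\ge \check{c}_1\lVert e\rVert_{H^1(\Omega_1)}^2$, the minimality just noted, and continuity $b_1(w,w)\le\hat{c}_1\lVert w\rVert_{H^1(\Omega_1)}^2$ evaluated on any bounded extension $w$ of $e|_{\partial\Omega_1}$, I obtain $\lVert e\rVert_{H^1(\Omega_1)}^2\le (\hat{c}_1/\check{c}_1)\,\inf_{w|_{\partial\Omega_1}=e|_{\partial\Omega_1}}\lVert w\rVert_{H^1(\Omega_1)}^2$, which is exactly where the factor $\sqrt{\hat{c}_1/\check{c}_1}$ enters and reduces the problem to estimating the trace of $e$ in the norm dual to the extension theorem (essentially $H^{1/2}(\partial\Omega_1)$). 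It then remains to show this trace norm is itself $O(c_1/\beta_D)$, with the resulting constant absorbed into $C(\Omega_1, u_1^{[k]})$.

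The main obstacle is precisely this last trace estimate. The plain energy identity controls $\lVert e\rVert_{L^2(\partial\Omega_1)}$ at rate $1/\beta_D$ but only delivers $\lVert e\rVert_{H^1(\Omega_1)}$ at the weaker rate $\beta_D^{-1/2}$; closing the extra half power to reach the stated $1/\beta_D$ in the $H^1$ norm cannot be achieved by testing alone and requires finer information --- either a duality (Aubin--Nitsche-type) argument against an auxiliary Robin problem, or elliptic regularity applied to the homogeneous interior equation satisfied by $e$ together with its effective Robin condition $c_1\nabla e\cdot\bm{n} + \beta_D e = -c_1\nabla u_1^{[k]}\cdot\bm{n}$ on $\partial\Omega_1$. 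This is also where regularity and trace information on $u_1^{[k]}$ beyond $H^2(\Omega_1)$ must be encoded inside $C(\Omega_1,u_1^{[k]})$, and I would expect the careful bookkeeping of these constants to be the most delicate part of the argument.
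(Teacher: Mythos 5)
Your setup is correct as far as it goes, and it in fact recovers the paper's Step~1 in weak form: your error equation $b_1(e,v) + \beta_D\langle e,v\rangle_{L^2(\partial\Omega_1)} = -c_1\langle\nabla u_1^{[k]}\cdot\bm{n}_1, v\rangle_{L^2(\partial\Omega_1)}$ is exactly the statement that $e=\hat{u}_1^{[k]}-u_1^{[k]}$ solves a homogeneous equation with the Robin boundary condition $c_1\nabla e\cdot\bm{n}_1+\beta_D e=-c_1\nabla u_1^{[k]}\cdot\bm{n}_1$, which mirrors the paper's characterization \eqref{DirichletSubProb-BndryPenalty-StrongForm}. But the proof is not complete: the entire assertion of the theorem --- the full $O(\beta_D^{-1})$ rate in $H^1(\Omega_1)$, rather than the $O(\beta_D^{-1/2})$ rate that your energy identity yields --- is precisely the step you leave open, and the routes you sketch for closing it (an Aubin--Nitsche duality argument, boundary elliptic regularity, an $H^{1/2}(\partial\Omega_1)$ bound on the trace of $e$) are neither carried out nor actually needed.

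The missing idea is elementary and is the heart of the paper's Steps~3--4: a competitor argument using an $H^1$ lifting of the flux. Since $u_1^{[k]}\in H^2(\Omega_1)$, the trace theorem provides $\varphi\in H^1(\Omega_1)$ with $\varphi|_{\partial\Omega_1}=-(\nabla u_1^{[k]}\cdot\bm{n}_1)|_{\partial\Omega_1}$ and $\lVert\varphi\rVert_{H^1(\Omega_1)}\le C(\Omega_1)\lVert u_1^{[k]}\rVert_{H^2(\Omega_1)}$. Your error equation is the Euler--Lagrange equation of the convex functional
\begin{equation*}
\mathcal{G}(w)=\frac12 b_1(w,w)+\frac{\beta_D}{2}\bigl\lVert w+c_1\beta_D^{-1}\nabla u_1^{[k]}\cdot\bm{n}_1\bigr\rVert^2_{L^2(\partial\Omega_1)},
\end{equation*}
so $e$ is its global minimizer. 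Comparing with the competitor $w=c_1\beta_D^{-1}\varphi$, whose boundary term vanishes identically by the choice of $\varphi$, gives
\begin{equation*}
\frac{\check{c}_1}{2}\lVert e\rVert^2_{H^1(\Omega_1)}\le\frac12 b_1(e,e)\le\mathcal{G}(e)\le\mathcal{G}\bigl(c_1\beta_D^{-1}\varphi\bigr)=\frac{c_1^2}{2\beta_D^2}\,b_1(\varphi,\varphi)\le\frac{\hat{c}_1c_1^2}{2\beta_D^2}\lVert\varphi\rVert^2_{H^1(\Omega_1)},
\end{equation*}
which is the claimed estimate with $C(\Omega_1,u_1^{[k]})=C(\Omega_1)\lVert u_1^{[k]}\rVert_{H^2(\Omega_1)}$. (Equivalently, test your error equation with $v=e-c_1\beta_D^{-1}\varphi$ and apply Cauchy--Schwarz; so your claim that the rate ``cannot be achieved by testing alone'' is incorrect --- it fails only for the naive choice $v=e$.) The paper runs this same mechanism, except that it first splits $u_1^{[k]}=w_1+g_1$ and $\hat{u}_1^{[k]}=\hat{w}_1+\hat{g}_1$ as in \eqref{DirichletSubProb-Extension-StrongForm}--\eqref{DirichletSubProb-BndryPenalty-Extension-StrongForm} and applies the competitor argument to each pair separately; working directly with $e$ as you do would make that decomposition unnecessary, but without the lifting-competitor step your argument does not reach the stated bound.
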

\begin{proof}
\textit{Step 1)} We first denote by $L_1(\hat{u}_1)$ the loss function of problem \eqref{IntfcProb-DN-DirichletSubProb-RitzForm-BndryPenalty}, that is, a functional on $H^1(\Omega_1)$
\begin{equation}
	\mathcal{L}_1(\hat{u}_1) = \frac12  b_1(\hat{u}_1, \hat{u}_1) - (f, \hat{u}_1)_1 + \frac{\beta_D}{2} \left( \lVert \hat{u}_1 \rVert_{L^2(\partial\Omega_1\cap\partial\Omega)}^2 + \lVert \hat{u}_1 - u_\Gamma^{[k]} \rVert_{L^2(\Gamma)}^2 \right),
	\label{DirichletSubProb-RitzForm-BndryPenalty-Loss}
\end{equation}
and then derive the optimality conditions that are satisfied by the unique global minimizer of \eqref{DirichletSubProb-RitzForm-BndryPenalty-Loss}. To be precise, the function $\hat{u}_1\in H^1(\Omega_1)$ is decomposed as a sum of two local functions, \textit{i.e.}, $ \hat{u}_1 = \hat{u}_1^{[k]} + g$ with $\hat{u}_1^{[k]}\in H^1(\Omega_1)$ satisfying 
\begingroup
\renewcommand*{\arraystretch}{1.3}
\begin{equation}
\left\{
\begin{array}{cl}
\displaystyle - \nabla \cdot ( c_1 \nabla \hat{u}_1^{[k]} ) + \hat{u}_1^{[k]} = f\ \ & \text{in}\ \Omega_1,\\
\displaystyle \hat{u}_1^{[k]} + c_1\beta_D^{-1} \nabla \hat{u}_1^{[k]} \cdot \bm{n}_1 =  0\ \ & \text{on}\ \partial\Omega_1\cap\partial\Omega, \\
\displaystyle \hat{u}_1^{[k]} + c_1\beta_D^{-1} \nabla \hat{u}_1^{[k]} \cdot \bm{n}_1 = u_\Gamma^{[k]} \ \ & \text{on}\ \Gamma, \\
\end{array}\right.
\label{DirichletSubProb-BndryPenalty-StrongForm}
\end{equation}
\endgroup
in the sense of distributions. Then, by applying the Green's formula to \eqref{DirichletSubProb-BndryPenalty-StrongForm}, a direct calculation of \eqref{DirichletSubProb-RitzForm-BndryPenalty-Loss} implies that\footnote{More details can be found in the technical \textbf{Appendix A}.} for any $\hat{u}_1\in H^1(\Omega_1)$,
\begin{equation*}
	\mathcal{L}_1(\hat{u}_1) = \mathcal{L}_1(\hat{u}_1^{[k]}) + \int_{\Omega_1} \left( \frac{c_1}{2}|\nabla g|^2 + \frac12|g|^2 \right) dx + \frac{\beta_D}{2} \int_{\partial\Omega_1} |g|^2 ds \geq \mathcal{L}_1(\hat{u}_1^{[k]}).
\end{equation*}
Or, equivalently, the unique weak solution of problem \eqref{DirichletSubProb-BndryPenalty-StrongForm} is the global minimizer of functional \eqref{DirichletSubProb-RitzForm-BndryPenalty-Loss}. Notably, when comparing \eqref{DirichletSubProb-BndryPenalty-StrongForm} with the original Dirichlet subproblem \eqref{IntfcProb-DN-DirichletSubProb-RitzForm} (written in terms of differential operators), \textit{i.e.}, 
\begingroup
\renewcommand*{\arraystretch}{1.3}
\begin{equation}
\left\{
\begin{array}{cl}
\displaystyle - \nabla \cdot ( c_1 \nabla u_1^{[k]} ) + u_1^{[k]} = f\ \ & \text{in}\ \Omega_1,\\
\displaystyle u_1^{[k]} =  0\ \ & \text{on}\ \partial\Omega_1\cap\partial\Omega, \\
\displaystyle u_1^{[k]} = u_\Gamma^{[k]} \ \ & \text{on}\ \Gamma, \\
\end{array}\right.
\label{IntfcProb-DN-DirichletSubProb-StrongForm}
\end{equation}
\endgroup
the Dirichlet boundary condition is modified to be of a Robin type owing to the boundary penalty treatment in \eqref{IntfcProb-DN-DirichletSubProb-RitzForm-BndryPenalty}. 

\textit{Step 2)} Now we are ready to quantitatively estimate the error induced from the ``soft'' boundary enforcement, that is, the distance between the weak solutions of \eqref{IntfcProb-DN-DirichletSubProb-StrongForm} and \eqref{DirichletSubProb-BndryPenalty-StrongForm}. To deal with the inhomogeneous boundary conditions in \eqref{IntfcProb-DN-DirichletSubProb-StrongForm} and \eqref{DirichletSubProb-BndryPenalty-StrongForm}, let us write $u_1^{[k]} = w_1 + g_1$ with an extension $g_1\in V_1$ of $u_\Gamma^{[k]}$ into $\Omega_1$ \cite{gilbarg1977elliptic}, namely,
\begingroup
\renewcommand*{\arraystretch}{1.3}
\begin{equation}
\left\{
\begin{array}{cl}
\displaystyle - \nabla \cdot ( c_1 \nabla w_1 ) + w_1 = f &\!\!  \text{in}\ \Omega_1,\\
\displaystyle w_1 = 0 &\!\!  \text{on}\ \partial\Omega_1, \\
\end{array}\right.
\ 
\left\{
\begin{array}{cl}
\displaystyle - \nabla \cdot ( c_1 \nabla g_1 ) + g_1 = 0 &\!\!  \text{in}\ \Omega_1,\\
\displaystyle g_1 =  0 &\!\!  \text{on}\ \partial\Omega_1\cap\partial\Omega, \\
\displaystyle g_1 = u_\Gamma^{[k]} &\!\!  \text{on}\ \Gamma, \\
\end{array}\right.
\label{DirichletSubProb-Extension-StrongForm}
\end{equation}
\endgroup
and $\hat{u}_1^{[k]} = \hat{w}_1 + \hat{g}_1$ with another extension $\hat{g}_1\in H^1(\Omega_1)$ of $u_\Gamma^{[k]}$ into $\Omega_1$, that is,
\begingroup
\renewcommand*{\arraystretch}{1.3}
\begin{equation}
\left\{
\begin{array}{cl}
\displaystyle - \nabla \cdot ( c_1 \nabla \hat{w}_1 ) + \hat{w}_1 = f &\!\!  \text{in}\ \Omega_1,\\
\displaystyle \hat{w}_1 + c_1\beta_D^{-1} \nabla \hat{w}_1 \cdot \bm{n}_1 =  0 &\!\!  \text{on}\ \partial\Omega_1,\\
\end{array}\right.
\ 
\left\{
\begin{array}{cl}
\displaystyle - \nabla \cdot ( c_1 \nabla \hat{g}_1 ) + \hat{g}_1 = 0 &\!\!  \text{in}\ \Omega_1,\\
\displaystyle \hat{g}_1 + c_1\beta_D^{-1} \nabla \hat{g}_1 \cdot \bm{n}_1 =  0 &\!\! \text{on}\ \partial\Omega_1\cap\partial\Omega, \\
\displaystyle \hat{g}_1 + c_1\beta_D^{-1} \nabla \hat{g}_1 \cdot \bm{n}_1 = u_\Gamma^{[k]} &\!\!  \text{on}\ \Gamma, \\
\end{array}\right.
\label{DirichletSubProb-BndryPenalty-Extension-StrongForm}
\end{equation}
\endgroup
then it immediately follows from the triangle inequality that
\begin{equation*}
	\lVert \hat{u}_1^{[k]} - u_1^{[k]} \rVert_{H^1(\Omega_1)} = \lVert (\hat{w}_1 + \hat{g}_1) - (w_1+g_1) \rVert_{H^1(\Omega_1)} \leq \lVert \hat{w}_1 - w_1 \rVert_{H^1(\Omega_1)} +  \lVert \hat{g}_1 - g_1 \rVert_{H^1(\Omega_1)}.
\end{equation*}

\textit{Step 3)} Based on the variational form, the extension function $g_1$ in \eqref{DirichletSubProb-Extension-StrongForm} satisfies
\begin{equation*}
	b_1(g_1,\hat{g}_1) \!=\! \int_{\Omega_1}  ( c_1 \nabla g_1 \cdot \nabla \hat{g}_1 + g_1\hat{g}_1 ) \, dx \!=\! \int_{\partial\Omega_1} (c_1 \nabla g_1 \cdot \bm{n_1}) \hat{g}_1 \, ds \!=\! (c_1 \nabla g_1 \cdot \bm{n_1}, \hat{g}_1)_{L^2(\partial\Omega_1)} 
\end{equation*}
with $\hat{g}_1\in H^1(\Omega_1)$ being used as the test function, while the extension function $\hat{g}_1$ of \eqref{DirichletSubProb-BndryPenalty-Extension-StrongForm} is the minimizer of energy functional\footnote{Here and in what follows, we do not distinguish between the general function and the optimal solution for notational simplicity.}
\begingroup
\renewcommand*{\arraystretch}{2.}
\begin{equation*}
\begin{array}{cl}
\mathcal{F}_1 (\hat{g}_1)\!\!\!\!\! & = \displaystyle \frac12 b_1(\hat{g}_1,\hat{g}_1) + \frac{\beta_D}{2} \left( \lVert \hat{g}_1 \rVert^2_{L^2(\partial\Omega_1\cap\partial\Omega)} + ( \hat{g}_1-2u_\Gamma^{[k]}, \hat{g}_1 )_{L^2(\Gamma)} \right) - b_1(g_1,\hat{g}_1) \\
& \displaystyle \ \ \ +\ (c_1 \nabla g_1 \cdot \bm{n_1}, \hat{g}_1)_{L^2(\partial\Omega_1)} \\
& = \displaystyle \frac12 b_1(\hat{g}_1 - g_1,\hat{g}_1 - g_1) + \frac{\beta_D}{2} \Big( \lVert \hat{g}_1 + c_1\beta_D^{-1} \nabla g_1\cdot \bm{n}_1 \rVert^2_{L^2(\partial\Omega_1\cap\partial\Omega)} \\
& \displaystyle \ \ \ +\ \lVert \hat{g}_1 + c_1\beta_D^{-1} \nabla g_1 \cdot \bm{n}_1 - u_\Gamma^{[k]} \rVert^2_{L^2(\Gamma)} \Big) - \frac12 b_1( g_1, g_1 ) \\
& \ \ \ \displaystyle -\ \frac{\beta_D}{2} \left( \lVert c_1\beta_D^{-1} \nabla g_1 \cdot \bm{n}_1 \rVert^2_{L^2(\partial\Omega_1\cap\partial\Omega)} + \lVert c_1\beta_D^{-1}\nabla g_1 \cdot \bm{n}_1 - u_\Gamma^{[k]} \rVert^2_{L^2(\Gamma)} \right)
\end{array}
\end{equation*}
\endgroup
and from which we can conclude that the function $\hat{g}_1$ is also the minimizer of 
\begin{equation*}
\begin{split}
\mathcal{G}_1(\hat{g}_1) & =\displaystyle \frac12 b_1(\hat{g}_1 - g_1,\hat{g}_1 - g_1) + \frac{\beta_D}{2} \Big( \lVert \hat{g}_1 + c_1\beta_D^{-1} \nabla g_1\cdot \bm{n}_1 \rVert^2_{L^2(\partial\Omega_1\cap\partial\Omega)} \\
& \displaystyle \ \ \ + \lVert \hat{g}_1 + c_1\beta_D^{-1} \nabla g_1 \cdot \bm{n}_1 - u_\Gamma^{[k]} \rVert^2_{L^2(\Gamma)} \Big).
\end{split}
\end{equation*}
On the one hand, by defining $\check{c}_1 = \min\{c_1,1\}$, it is obvious that 
\begin{equation}
	\mathcal{G}_1(\hat{g}_1) \geq \int_{\Omega_1} \left( \frac{c_1}{2}|\nabla (\hat{g}_1 - g_1) |^2 + \frac12 | \hat{g}_1 - g_1 |^2 \right)dx \geq \frac{\check{c}_1}{2} \lVert \hat{g}_1 - g_1 \rVert_{H^1(\Omega_1)}^2
	\label{DirichletSubProb-temp1}
\end{equation}
On the other hand, due to the fact that $g_1\in H^2(\Omega_1)$ under mild assumptions \cite{evans2010partial}, we have by using the trace theorem \cite{lions2012non} that $(\nabla g_1 \cdot \bm{n}_1)|_{\partial \Omega_1}\in H^{\frac12}(\partial \Omega_1)$ and therefore there exists a function $\phi\in H^1(\Omega_1)$ such that $\phi |_{\partial \Omega_1} =  - (\nabla g_1 \cdot \bm{n}_1) |_{\partial \Omega_1}$. 

Then, by choosing $\bar{g} = c_1\beta_D^{-1}\phi + g_1$ and using the boundary conditions of $g_1$ \eqref{DirichletSubProb-Extension-StrongForm}, the optimality of $\hat{g}_1$ among all functions in $H^1(\Omega_1)$ implies that
\begin{equation*}
\begin{split}
	\mathcal{G}_1(\hat{g}_1) & \leq  \displaystyle \mathcal{G}_1( \bar{g} ) = c_1^2\beta_D^{-2} \int_{\Omega_1} b_1(\phi,\phi) + \frac{\beta_D}{2} \left( \lVert g_1 \rVert^2_{L^2(\partial\Omega_1\cap\partial\Omega)} + \lVert g_1 - u_\Gamma^{[k]} \rVert^2_{L^2(\Gamma)} \right) \\
	& \leq \displaystyle \frac{\hat{c}_1}{2} c_1^2\beta_D^{-2} \lVert \phi \rVert_{H^1(\Omega_1)}^2
\end{split}	
\end{equation*}
where $\hat{c}_1 = \max\{c_1,1\}$. As a direct result, we have by \eqref{DirichletSubProb-temp1} that
\begin{equation*}
	\lVert \hat{g}_1 - g_1 \rVert_{H^1(\Omega_1)} \leq \frac{c_1}{\beta_D}\sqrt{\frac{\hat{c}_1}{\check{c}_1}} \lVert \phi \rVert_{H^1(\Omega_1)}. 
\end{equation*}

\textit{Step 4)} It remains to prove that the solution $\hat{w}_1\in H^1(\Omega_1)$ of the Robin problem \eqref{DirichletSubProb-BndryPenalty-Extension-StrongForm} can converge to the solution $w_1\in H^1_0(\Omega_1)$ of the Dirichlet problem \eqref{DirichletSubProb-Extension-StrongForm} as $\beta_D\to\infty$ \cite{jiao2021error,duan2021analysis}. Similar as before, by employing $\hat{w}_1\in H^1(\Omega_1)$ as the test function in \eqref{DirichletSubProb-Extension-StrongForm} and resorting to variational form of \eqref{DirichletSubProb-BndryPenalty-Extension-StrongForm}, the function $\hat{w}_1\in H^1(\Omega_1)$ minimizes
\begingroup
\renewcommand*{\arraystretch}{2.}
\begin{equation*}
\begin{array}{cl}
\mathcal{I}_1 (\hat{w}_1)\!\!\!\!\! & = \displaystyle \frac12 b_1(\hat{w}_1,\hat{w}_1) - (f,\hat{w}_1)_1 + \frac{\beta_D}{2} \lVert \hat{w}_1 \rVert^2_{L^2(\partial\Omega_1)} - b_1(w_1,\hat{w}_1) + (f,\hat{w}_1) \\
& \displaystyle \ \ \ +\ (c_1 \nabla w_1 \cdot \bm{n_1}, \hat{w}_1)_{L^2(\partial\Omega_1)} \\
& = \displaystyle \frac12 b_1(\hat{w}_1 - w_1,\hat{w}_1 - w_1) + \frac{\beta_D}{2} \lVert \hat{w}_1 + c_1\beta_D^{-1} \nabla w_1\cdot \bm{n}_1 \rVert^2_{L^2(\partial\Omega_1)}  \\
& \displaystyle \ \ \ -\ \frac12 b_1( w_1, w_1 ) - \frac{\beta_D}{2} \lVert c_1\beta_D^{-1} \nabla w_1 \cdot \bm{n}_1 \rVert^2_{L^2(\partial\Omega_1)} 
\end{array}
\end{equation*}
\endgroup
and therefore is also the minimizer of the energy functional 
\begin{equation*}
	\mathcal{J}_1(\hat{w}_1) = \frac12 b_1(\hat{w}_1 - w_1,\hat{w}_1 - w_1) + \frac{\beta_D}{2} \lVert \hat{w}_1 + c_1\beta_D^{-1} \nabla w_1\cdot \bm{n}_1 \rVert^2_{L^2(\partial\Omega_1)}.
\end{equation*}

Note that $w_1\in H^2(\Omega_1)$ under mild assumptions \cite{evans2010partial}, the trace theorem implies that $(\nabla w_1 \cdot \bm{n}_1)|_{\partial \Omega_1}\in H^{\frac12}(\partial \Omega_1)$ and therefore there exists a function $\varphi\in H^1(\Omega_1)$ such that $\varphi |_{\partial \Omega_1} =  - (\nabla w_1 \cdot \bm{n}_1) |_{\partial \Omega_1}$ \cite{lions2012non}. As a consequence, by employing a particular function $\bar{w} = c_1\beta_D^{-1}\varphi + w_1\in H^1(\Omega_1)$ and using the boundary condition of $w_1$ in \eqref{DirichletSubProb-Extension-StrongForm}, 
\begin{equation*}
\begin{split}	
	\mathcal{J}_1(\hat{w}_1) & \displaystyle \leq \mathcal{J}_1( \bar{w} ) = c_1^2\beta_D^{-2} \int_{\Omega_1} \left( \frac{c_1}{2}|\nabla \varphi |^2 + \frac12 | \varphi |^2 \right)dx + \frac{\beta_D}{2} \lVert w_1 \rVert^2_{L^2(\partial\Omega_1)} \\
    & \displaystyle \leq \frac{\hat{c}_1}{2} c_1^2\beta_D^{-2} \lVert \varphi \rVert_{H^1(\Omega_1)}^2.
\end{split}		
\end{equation*}
On the other hand, it is obvious that
\begin{equation*}
\begin{split}	
	\mathcal{J}_1(\hat{w}_1) & \displaystyle \geq \frac12 b_1(\hat{w}_1 - w_1,\hat{w}_1 - w_1) = \int_{\Omega_1} \left( \frac{c_1}{2}|\nabla (\hat{w}_1 - w_1) |^2 + \frac12 | \hat{w}_1 - w_1 |^2 \right)dx \\
	& \displaystyle \geq \frac{\check{c}_1}{2} \lVert \hat{w}_1 - w_1 \rVert_{H^1(\Omega_1)}^2
\end{split}		
\end{equation*}
which leads to the error estimation that completes the proof
\begin{equation*}
	\lVert \hat{w}_1 - w_1 \rVert_{H^1(\Omega_1)} \leq \frac{c_1}{\beta_D} \sqrt{\frac{\hat{c}_1}{\check{c}_1}} \lVert \varphi \rVert_{H^1(\Omega_1)}.
\end{equation*}
\end{proof}

\begin{theorem}
	Assume that $\beta_D\to\infty$ in \eqref{IntfcProb-DN-DirichletSubProb-RitzForm-BndryPenalty} (or \textnormal{\textbf{Theorem \ref{IntfcProb-DN-DirichletSubProb-ErrAnlys}}}), let $u_2^{[k]}$ be the solution of problem \eqref{IntfcProb-DN-NeumannSubProb-RitzForm} and $\hat{u}_2^{[k]}$ the solution of problem \eqref{IntfcProb-DN-NeumannSubProb-RitzForm-BndryPenalty}, then there holds\footnote{The minimizer $\hat{u}_2^{[k]}\in H^1(\Omega_1)$ of \eqref{IntfcProb-DN-NeumannSubProb-RitzForm-BndryPenalty} is restricted on the subdomain $\Omega_2$ (denoted by $\hat{u}_2^{[k]} |_{\Omega_2}$).} 
\begin{equation}
	\lVert \hat{u}_2^{[k]} |_{\Omega_2} - u_2^{[k]} \rVert_{H^1(\Omega_2)} \leq C(\Omega_2,u_2^{[k]}) \frac{c_2}{\beta_N}\sqrt{\frac{\hat{c}_2}{\check{c}_2}}
\end{equation}
where $\hat{c}_2=\max\{c_2,1\}$, $\check{c}_2=\min\{c_2,1\}$, and $C(\Omega_2,u_2^{[k]})$ represents a generic constant that depends on the subdomain $\Omega_2$ and the solution $u_2^{[k]}$ of Neumann subproblem \eqref{IntfcProb-DN-NeumannSubProb-RitzForm}.
\label{IntfcProb-DN-NeumannSubProb-ErrAnlys}
\end{theorem}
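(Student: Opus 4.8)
The plan is to mirror the four-step strategy of \textbf{Theorem \ref{IntfcProb-DN-DirichletSubProb-ErrAnlys}}, the key being to first reduce the global, whole-domain minimization \eqref{IntfcProb-DN-NeumannSubProb-RitzForm-BndryPenalty} to an \emph{effective} boundary value problem posed only on $\Omega_2$, and then to recognize that the resulting estimate is a verbatim analogue of Step 4 (the Robin-versus-Dirichlet comparison). First I would compute the first-order optimality condition for the functional in \eqref{IntfcProb-DN-NeumannSubProb-RitzForm-BndryPenalty}. Splitting a test function $v\in H^1(\Omega)$ into its restrictions $v|_{\Omega_1}$, $v|_{\Omega_2}$ (which share a common trace on $\Gamma$) and integrating by parts on each subregion, the term $b_1(\hat{u}_1^{[k]},v)-(f,v)_1$ reduces---since $\hat{u}_1^{[k]}$ solves the Dirichlet PDE in $\Omega_1$---to the boundary pairing $(c_1\nabla\hat{u}_1^{[k]}\cdot\bm{n}_1,v)_{L^2(\partial\Omega_1)}$. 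Invoking the hypothesis $\beta_D\to\infty$, \textbf{Theorem \ref{IntfcProb-DN-DirichletSubProb-ErrAnlys}} yields $\hat{u}_1^{[k]}=u_1^{[k]}$, so on $\Gamma$ this pairing supplies exactly the flux datum $c_1\nabla u_1^{[k]}\cdot\bm{n}_1$. Collecting the variations then shows that $\hat{u}_2^{[k]}|_{\Omega_2}$ satisfies, in the weak sense, the interior equation $-\nabla\cdot(c_2\nabla\hat{u}_2^{[k]})+\hat{u}_2^{[k]}=f$ in $\Omega_2$, the exact Neumann flux $c_2\nabla\hat{u}_2^{[k]}\cdot\bm{n}_2=-q-c_1\nabla u_1^{[k]}\cdot\bm{n}_1$ on $\Gamma$, and a Robin condition $\hat{u}_2^{[k]}+c_2\beta_N^{-1}\nabla\hat{u}_2^{[k]}\cdot\bm{n}_2=0$ on $\partial\Omega_2\cap\partial\Omega$---the penalization on $\partial\Omega$ having relaxed only the exterior Dirichlet condition, while the interface condition remains the natural (exact) one.

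Once this reduction is in place, the remaining argument runs parallel to Step 4. Both $\hat{u}_2^{[k]}|_{\Omega_2}$ and the original solution $u_2^{[k]}$ minimize the same quadratic energy on $\Omega_2$ carrying the identical interface term $(q+c_1\nabla u_1^{[k]}\cdot\bm{n}_1,\cdot)_{L^2(\Gamma)}$; they differ only in that $u_2^{[k]}$ is constrained by $u_2^{[k]}=0$ on $\partial\Omega_2\cap\partial\Omega$ whereas $\hat{u}_2^{[k]}|_{\Omega_2}$ carries the penalty $\tfrac{\beta_N}{2}\lVert\cdot\rVert^2_{L^2(\partial\Omega_2\cap\partial\Omega)}$. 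Using $u_2^{[k]}$ as a test function and adding the (vanishing) weak identity it satisfies, I would complete the square to show that $\hat{u}_2^{[k]}|_{\Omega_2}$ also minimizes
\begin{equation*}
\mathcal{J}_2(w)=\frac12 b_2(w-u_2^{[k]},w-u_2^{[k]})+\frac{\beta_N}{2}\bigl\lVert w+c_2\beta_N^{-1}\nabla u_2^{[k]}\cdot\bm{n}_2\bigr\rVert^2_{L^2(\partial\Omega_2\cap\partial\Omega)}.
\end{equation*}
Since $u_2^{[k]}\in H^2(\Omega_2)$ under the standing regularity assumption, the trace theorem furnishes $\psi\in H^1(\Omega_2)$ with $\psi|_{\partial\Omega_2}=-\nabla u_2^{[k]}\cdot\bm{n}_2$, and the competitor $\bar{u}=c_2\beta_N^{-1}\psi+u_2^{[k]}$ makes the boundary penalty term vanish (because $u_2^{[k]}=0$ there), giving the upper bound $\mathcal{J}_2(\hat{u}_2^{[k]})\le\mathcal{J}_2(\bar{u})\le\tfrac{\hat{c}_2}{2}c_2^2\beta_N^{-2}\lVert\psi\rVert^2_{H^1(\Omega_2)}$. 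Pairing this with the coercivity lower bound $\mathcal{J}_2(\hat{u}_2^{[k]})\ge\tfrac{\check{c}_2}{2}\lVert\hat{u}_2^{[k]}|_{\Omega_2}-u_2^{[k]}\rVert^2_{H^1(\Omega_2)}$ yields the claimed estimate with $C(\Omega_2,u_2^{[k]})=\lVert\psi\rVert_{H^1(\Omega_2)}$.

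I expect the main obstacle to be the first step rather than the concluding estimate: one must argue rigorously that the restriction to $\Omega_2$ of the \emph{global} minimizer over $H^1(\Omega)$ is itself the minimizer of the effective $\Omega_2$-functional with the correct interface data. This hinges on the fact that the $\Omega_1$-contribution $b_1(\hat{u}_1^{[k]},\cdot)-(f,\cdot)_1$ is only \emph{linear} in the unknown $\hat{u}_2$, so that after integration by parts it collapses to a pure boundary functional on $\partial\Omega_1$ and, via $\hat{u}_1^{[k]}=u_1^{[k]}$, reproduces exactly the flux datum appearing in \eqref{IntfcProb-DN-NeumannSubProb-RitzForm}; verifying that this introduces no spurious coupling into the $\Omega_2$-estimate and that the interface flux is recovered exactly (so that only the exterior Robin--Dirichlet discrepancy contributes) is the delicate point. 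After this, the completion-of-squares and trace-extension machinery is identical to \textbf{Theorem \ref{IntfcProb-DN-DirichletSubProb-ErrAnlys}}.
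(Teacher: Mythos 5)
Your proposal is correct and takes essentially the same route as the paper: your first step (deriving the effective problem on $\Omega_2$ with the exact interface flux and an exterior Robin condition via the Euler--Lagrange conditions, using that the $\Omega_1$-contribution is linear in $\hat{u}_2$ and collapses to a boundary pairing) is exactly the paper's Step 1 characterization \eqref{NeumannSubProb-BndryPenalty-StrongForm}--\eqref{NeumannSubProb-BndryPenalty-StrongForm-Extension}, done there by the decomposition $\hat{u}_2=\hat{u}_2^{[k]}+g$ and Green's formula. Your second step, with the functional $\mathcal{J}_2$, the trace-extension competitor $\bar{u}=c_2\beta_N^{-1}\psi+u_2^{[k]}$, and the coercivity lower bound, coincides with the paper's Step 2 (its $\mathcal{G}_2$ and $\zeta$), yielding the identical estimate.
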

\begin{proof}
\textit{Step 1)} We first denote by $L_2(\hat{u}_2)$ the loss function of problem \eqref{IntfcProb-DN-NeumannSubProb-RitzForm-BndryPenalty}, 
\begin{equation}
	\mathcal{L}_2(\hat{u}_2) = \frac12 b_2(\hat{u}_2, \hat{u}_2) - (f, \hat{u}_2)_2 + b_1(\hat{u}_1^{[k]}, \hat{u}_2) - (f, \hat{u}_2)_1 + (q,\hat{u}_2)_{L^2(\Gamma)} + \frac{\beta_N}{2} \lVert \hat{u}_2 \rVert_{L^2(\partial\Omega)}^2,
	\label{NeumannSubProb-RitzForm-BndryPenalty-Loss}
\end{equation}
\textit{i.e.}, a functional on $H^1(\Omega)$, and then derive the optimality conditions that are satisfied by the global minimizer of \eqref{NeumannSubProb-RitzForm-BndryPenalty-Loss}. It's of particular noteworthy that the function $\hat{u}_2\in H^1(\Omega)$ is defined over the entire domain, which greatly differs from the standard Neumann subproblem \eqref{IntfcProb-DN-NeumannSubProb-RitzForm} that only depends on the subdomain $\Omega_2$. 

As such, we decompose $\hat{u}_2\in H^1(\Omega)$ as a sum of two global functions, namely, $ \hat{u}_2 = \hat{u}_2^{[k]} + g$, where the restriction of $\hat{u}_2^{[k]}\in H^1(\Omega)$ on subdomain $\Omega_2$ (not relabelled) is required to satisfy the equations
\begingroup
\renewcommand*{\arraystretch}{1.3}
\begin{equation}
\left\{
\begin{array}{cl}
\displaystyle - \nabla \cdot ( c_2 \nabla \hat{u}_2^{[k]} ) + \hat{u}_2^{[k]} = f\ \ & \text{in}\ \Omega_2,\\
\displaystyle \hat{u}_2^{[k]} + c_2\beta_N^{-1} \nabla \hat{u}_2^{[k]} \cdot \bm{n}_2 =  0\ \ & \text{on}\ \partial\Omega_2\cap\partial\Omega, \\
\displaystyle c_2 \nabla \hat{u}_2^{[k]} \cdot \bm{n}_2 = - q - c_1 \nabla \hat{u}_1^{[k]} \cdot \bm{n}_1 \ \ & \text{on}\ \Gamma, \\
\end{array}\right.
\label{NeumannSubProb-BndryPenalty-StrongForm}
\end{equation}
\endgroup
in the sense of distributions. Furthermore, the extension of function $\hat{u}_2^{[k]} |_{\Omega_2}$ to the other subdomain $\Omega_1$ is required to be weakly differentiable and to satisfy the Robin boundary condition
\begin{equation}
	\hat{u}_2^{[k]} + c_1\beta_N^{-1} \nabla \hat{u}_1^{[k]} \cdot \bm{n}_1 =  0\ \ \ \text{on}\ \partial\Omega_1\cap\partial\Omega, 
	\label{NeumannSubProb-BndryPenalty-StrongForm-Extension}
\end{equation}
in the weak sense. Then, by applying the Green's formula to \eqref{DirichletSubProb-BndryPenalty-StrongForm}, \eqref{NeumannSubProb-BndryPenalty-StrongForm} and using the jump condition \eqref{IntfcProb-StrongForm}, it can be deduced directly from \eqref{NeumannSubProb-RitzForm-BndryPenalty-Loss} that\footnote{More details can be found in the technical \textbf{Appendix B}.} for any $\hat{u}_2\in H^1(\Omega)$,
\begin{equation*}
	\mathcal{L}_2(\hat{u}_2) = \mathcal{L}_2(\hat{u}_2^{[k]}) + \int_{\Omega_2} \left( \frac{c_2}{2} |\nabla g|^2 + \frac12 |g|^2 \right) dx + \frac{\beta_N}{2} \int_{\partial\Omega} |g|^2ds \geq \mathcal{L}_2(\hat{u}_2^{[k]}),
\end{equation*}
namely, the global minimizer of \eqref{NeumannSubProb-RitzForm-BndryPenalty-Loss} can be characterized by the function $\hat{u}_2^{[k]}\in H^1(\Omega)$ that satisfies \eqref{NeumannSubProb-BndryPenalty-StrongForm} and \eqref{NeumannSubProb-BndryPenalty-StrongForm-Extension}. It is noted that only the restricted solution $\hat{u}_2^{[k]} |_{\Omega_2}\in H^1(\Omega_2)$, or equivalently, the weak solution of subproblem \eqref{NeumannSubProb-BndryPenalty-StrongForm} is of interest for error estimation, which is still denoted by $\hat{u}_2^{[k]}$ for short in the remaining of this proof. It is also noteworthy that when compared to the original Neumann subproblem \eqref{IntfcProb-DN-NeumannSubProb-RitzForm} (written in terms of differential operators), that is,
\begingroup
\renewcommand*{\arraystretch}{1.3}
\begin{equation}
\left\{
\begin{array}{cl}
\displaystyle - \nabla \cdot ( c_2 \nabla u_2^{[k]} ) + u_2^{[k]} = f\ \ & \text{in}\ \Omega_2,\\
\displaystyle u_2^{[k]} =  0\ \ & \text{on}\ \partial\Omega_2\cap\partial\Omega, \\
\displaystyle c_2 \nabla u_2^{[k]} \cdot \bm{n}_2 = - q - c_1 \nabla u_1^{[k]} \cdot \bm{n}_1 \ \ & \text{on}\ \Gamma, \\
\end{array}\right.
\label{IntfcProb-DN-NeumannSubProb-StrongForm}
\end{equation}
\endgroup
a Robin boundary condition is imposed on $\partial\Omega_2\cap\partial\Omega$ instead of the Dirichlet type, while the interface jump condition can be maintained exactly as long as the penalty coefficient $\beta_D$ in \eqref{IntfcProb-DN-DirichletSubProb-RitzForm} goes to infinity (see Theorem \ref{IntfcProb-DN-DirichletSubProb-ErrAnlys}). 

\textit{Step 2)} To simplify the error analysis, it is assumed that the penalty coefficient $\beta_D\to\infty$ in \eqref{IntfcProb-DN-DirichletSubProb-RitzForm-BndryPenalty}, \textit{i.e.}, $\hat{u}_1^{[k]} = u_1^{[k]}$ in \eqref{NeumannSubProb-BndryPenalty-StrongForm} and \eqref{IntfcProb-DN-NeumannSubProb-StrongForm}. With the solution $\hat{u}_2^{[k]}\in H^1(\Omega_2)$ being used as the test function, integration by parts for \eqref{IntfcProb-DN-NeumannSubProb-StrongForm} implies that
\begin{equation*}
	b_2( u_2^{[k]}, \hat{u}_2^{[k]} ) + ( q + c_1\nabla u_1^{[k]} \cdot \bm{n}_1, \hat{u}_2^{[k]} )_{L^2(\Gamma)}  - ( c_2\nabla u_2^{[k]} \cdot \bm{n}_2, \hat{u}_2^{[k]} )_{L^2(\partial\Omega_2\cap\partial\Omega)} = (f, \hat{u}_2^{[k]})_2
\end{equation*}
which can be employed to reformulate the energy functional\footnote{We do not distinguish between the general function and the optimal solution for simplicity.} of subproblem \eqref{NeumannSubProb-BndryPenalty-StrongForm} as
\begingroup
\renewcommand*{\arraystretch}{2.}
\begin{equation*}
\begin{array}{cl}
\mathcal{F}_2 (\hat{u}_2^{[k]})\!\!\!\!\! & = \displaystyle \frac12 b_2(\hat{u}_2^{[k]},\hat{u}_2^{[k]}) - (f, \hat{u}_2^{[k]})_2 + \frac{\beta_N}{2} \lVert \hat{u}_2^{[k]} \rVert^2_{L^2(\partial\Omega_2\cap\partial\Omega)} \\
& \ \ \ +\ ( q + c_1 \nabla \hat{u}_1^{[k]} \cdot \bm{n}_1, \hat{u}_2^{[k]} )_{L^2(\Gamma)} \\
& = \displaystyle  \frac12 b_2(\hat{u}_2^{[k]},\hat{u}_2^{[k]}) - b_2( u_2^{[k]}, \hat{u}_2^{[k]}) + \frac{\beta_N}{2} \lVert \hat{u}_2^{[k]} \rVert^2_{L^2(\partial\Omega_1\cap\partial\Omega)} \\
& \ \ \ +\ ( c_2\nabla u_2^{[k]} \cdot \bm{n}_2, \hat{u}_2^{[k]} )_{L^2(\partial\Omega_2\cap\partial\Omega)} \\
& = \displaystyle \frac12 b_2( \hat{u}_2^{[k]} - u_2^{[k]}, \hat{u}_2^{[k]} - u_2^{[k]}) + \frac{\beta_N}{2} \lVert \hat{u}_2^{[k]} + c_2\beta_N^{-1} \nabla u_2^{[k]} \cdot \bm{n}_2 \rVert^2_{L^2(\partial\Omega_2\cap\partial\Omega)} \\
& \ \ \ \displaystyle - \frac12 b_2( u_2^{[k]}, u_2^{[k]} ) - \frac{\beta_N}{2} \lVert c_2\beta_N^{-1} \nabla u_2^{[k]} \cdot \bm{n}_2 \rVert^2_{L^2(\partial\Omega_2\cap\partial\Omega)}.
\end{array}
\end{equation*}
\endgroup
As a result, we can conclude that the weak solution $\hat{u}_2^{[k]}\in H^1(\Omega_2)$ of \eqref{NeumannSubProb-BndryPenalty-StrongForm} is also the minimizer of functional
\begin{equation*}
	\mathcal{G}_2(\hat{u}_2^{[k]}) = \frac12 b_2( \hat{u}_2^{[k]} - u_2^{[k]}, \hat{u}_2^{[k]} - u_2^{[k]}) + \frac{\beta_N}{2} \lVert \hat{u}_2^{[k]} + c_2\beta_N^{-1} \nabla u_2^{[k]} \cdot \bm{n}_2 \rVert^2_{L^2(\partial\Omega_2\cap\partial\Omega)}
\end{equation*}
Clearly, be defining $\check{c}_2 = \min\{c_2,1\}$, it is obvious that
\begin{equation}
\begin{split}
	\mathcal{G}_2(\hat{u}_2^{[k]}) & \displaystyle \geq \frac12 b_2( \hat{u}_2^{[k]} - u_2^{[k]}, \hat{u}_2^{[k]} - u_2^{[k]}) \\
	& \displaystyle = \int_{\Omega_2} \left( \frac{c_2}{2} |\nabla (\hat{u}_2^{[k]} - u_2^{[k]}) |^2 + \frac12 |\hat{u}_2^{[k]} - u_2^{[k]}|^2  \right)dx \\
	& \displaystyle \geq \frac{\check{c}_2}{2} \lVert \hat{u}_2^{[k]} - u_2^{[k]} \rVert_{H^1(\Omega_2)}^2.
\end{split}	
	\label{NeumannSubProb-temp1}
\end{equation}
On the other hand, note that $u_2^{[k]}\in H^2(\Omega_2)$ under mild assumptions \cite{evans2010partial,gilbarg1977elliptic}, we have by using the trace theorem \cite{lions2012non} that $(\nabla u_2^{[k]} \cdot \bm{n}_2)|_{\partial \Omega_2}\in H^{\frac12}(\partial \Omega_2)$ and therefore there exists a function $\zeta\in H^1(\Omega_2)$ such that $\zeta |_{\partial \Omega_2} =  - (\nabla u_2^{[k]} \cdot \bm{n}_2) |_{\partial \Omega_2}$. In particular, the trace operator has a continuous linear right inverse and therefore there holds
\begin{equation*}
	\lVert \zeta \rVert_{H^1(\Omega_2)} \leq C \lVert \nabla u_2^{[k]} \cdot \bm{n}_2 \rVert_{H^{\frac12}(\partial\Omega_2)} \leq C\lVert u_2^{[k]} \rVert_{H^2(\Omega_2)} 
\end{equation*}
where $C>0$ is a generic constant that only depends on the subdomain $\Omega_1$ \cite{bramble1995interpolation}. 

Then by setting $ \bar{u}_2 = c_2\beta_N^{-1}\zeta + u_2^{[k]} $ and using the homogeneous boundary condition of  $u_2^{[k]}$ in \eqref{IntfcProb-DN-NeumannSubProb-StrongForm}, the optimality of $\hat{u}_2^{[k]}$ among all functions in $H^1(\Omega_2)$ implies 
\begin{equation*}
\begin{split}
	\mathcal{G}_2(\hat{u}_2^{[k]}) & \displaystyle \leq \mathcal{G}_2( \bar{u}_2 ) = c_2^2\beta_N^{-2} \int_{\Omega_1} \left( \frac{c_1}{2}|\nabla \zeta |^2 + \frac12 | \zeta |^2 \right)dx + \frac{\beta_N}{2}  \lVert u_2^{[k]} \rVert^2_{L^2(\partial\Omega_1\cap\partial\Omega)}  \\
	& \displaystyle \leq \frac{\hat{c}_2}{2} c_2^2\beta_N^{-2} \lVert \zeta \rVert_{H^1(\Omega_1)}^2
\end{split}	
\end{equation*}
where $\hat{c}_2 = \max\{c_2,1\}$. Consequently, we have by \eqref{NeumannSubProb-temp1} that
\begin{equation*}
	\lVert \hat{u}_2^{[k]} - u_2^{[k]} \rVert_{H^1(\Omega_1)} \leq \frac{c_2}{\beta_N}\sqrt{\frac{\hat{c}_2}{\check{c}_2}} \lVert \zeta \rVert_{H^1(\Omega_1)}.
\end{equation*}
\end{proof}

To put it differently, by sending the penalty coefficients $\beta_D\to\infty$ in \eqref{IntfcProb-DN-DirichletSubProb-RitzForm-BndryPenalty} and $\beta_N\to\infty$ in \eqref{IntfcProb-DN-NeumannSubProb-RitzForm-BndryPenalty}, the minimizers of our relaxed optimization problems (\ref{IntfcProb-DN-DirichletSubProb-RitzForm-BndryPenalty}, \ref{IntfcProb-DN-NeumannSubProb-RitzForm-BndryPenalty}) could converge to that of the classical Dirichlet-Neumann algorithm (\ref{IntfcProb-DN-DirichletSubProb-RitzForm}, \ref{IntfcProb-DN-NeumannSubProb-RitzForm}), which paves the way for constructing the Dirichlet-Neumann learning algorithm in the practical scenario. Moreover, our analysis \eqref{NeumannSubProb-BndryPenalty-StrongForm} also sheds light on the tuning of penalty coefficients, that is, $\beta_N$ should be increased as $c_2$ increases, otherwise the trained network solution would fail to capture the Dirichlet boundary condition on $\partial\Omega_2\cap\partial\Omega$.


\subsection{Dirichlet-Neumann Learning Algorithm}

Next, the unknown solutions in \eqref{IntfcProb-DN-DirichletSubProb-RitzForm-BndryPenalty} and \eqref{IntfcProb-DN-NeumannSubProb-RitzForm-BndryPenalty} are parametrized using artificial neural networks \cite{goodfellow2016deep}, that is,
\begin{equation*}
	\hat{u}_1^{[k]}(x) = \hat{u}_1(x;\theta_1^{[k]}) \ \ \ \text{and}\ \ \ \hat{u}_2^{[k]}(x) = \hat{u}_2(x;\theta_2^{[k]})
\end{equation*}
where $\theta_i^{[k]}$ denotes the collection of trainable parameters at the $k$-th outer iteration for $i=1$, 2. More specifically, the fully connected neural network \cite{goodfellow2016deep} or other kinds of architectures can be deployed to construct the solution ansatz (more details about the model setup are included in \textbf{Appendix C}). Moreover, thanks to the mesh-free characteristic of artificial neural networks, the extension operator \eqref{Extension-ContinuousLevel} can be realized in a very straightforward way, that is,
\begin{equation}
	R_1\gamma_0 \hat{u}_2(x,\theta_2) = \hat{u}_2(x,\theta_2)
	\label{Extension-DiscreteLevel}
\end{equation}
which is required to be differentiable within $\Omega_1$ and to satisfy the zero boundary values on $\partial\Omega_1\cap\partial\Omega$ through an additional penalty term in \eqref{IntfcProb-DN-NeumannSubProb-RitzForm-BndryPenalty}. One can also employ a piecewise neural network \cite{he2022mesh} to realize the extension operation \eqref{Extension-DiscreteLevel}.

Accordingly, to discretize the energy functionals \eqref{IntfcProb-DN-DirichletSubProb-RitzForm-BndryPenalty} and \eqref{IntfcProb-DN-NeumannSubProb-RitzForm-BndryPenalty}, the routine way of generating training points inside each subdomain and at its boundary is to use the Monte Carlo method or its variants \cite{metropolis1949monte}, namely,
\begin{equation*}
	X_{\Omega_i} = \big\{ x_n^{\Omega_i} \big\}_{n=1}^{N_{\Omega_i}},\ \ \ X_{D_i} = \big\{ x_n^{D_i} \big\}_{n=1}^{N_{D_i}},\ \ \ \text{and}\ \ \ X_{\Gamma} = \big\{ x_n^{\Gamma} \big\}_{n=1}^{N_{\Gamma}},
\end{equation*}
where $D_i := \partial\Omega_i\cap\partial\Omega$, $N_{\Omega_i}$, $N_{D_i}$ and $N_\Gamma$ denote the sample size of training datasets $X_{\Omega_i}$, $X_{D_i}$ and $X_\Gamma$, respectively. 

As a result, by defining the following empirical loss functions for $1\leq i, j \leq 2$, 
\begingroup
\renewcommand*{\arraystretch}{2.5}
\begin{equation*}
\begin{array}{c}
\displaystyle L_{\Omega_i} ( \hat{u}_i ) = \frac{1}{N_{\Omega_i}} \sum_{n=1}^{N_{\Omega_i}} \left( \frac{c_i}{2} | \nabla \hat{u}_i(x_n^{\Omega_i};\theta_i) |^2 - f(x_n^{\Omega_i}) \hat{u}_i(x_n^{\Omega_i};\theta_i)\right), \\
\displaystyle L_{D_i} ( \hat{u}_j ) = \frac{1}{N_{D_i}} \sum_{n=1}^{N_{D_i}} | \hat{u}_j(x_n^{D_i};\theta_j) |^2,\ L_{\Gamma_N} \big( \hat{u}_2 \big) = \frac{1}{N_\Gamma} \sum_{n=1}^{N_\Gamma}  q( x_n^\Gamma )\hat{u}_2(x_n^\Gamma;\theta_2),\\
\displaystyle L_{\Gamma_D} ( \hat{u}_1, u_\Gamma^{[k]} ) = \frac{1}{N_\Gamma} \sum_{n=1}^{N_\Gamma} | \hat{u}_1(x_n^\Gamma;\theta_1) - u_\Gamma^{[k]}( x_n^\Gamma ) |^2,
\end{array}
\end{equation*}
\vspace{-0.8cm}
\begin{equation*}
\begin{array}{c}
\displaystyle L_N ( \hat{u}_2, \hat{u}_1^{[k]} ) = \frac{1}{N_{\Omega_1}} \sum_{n=1}^{N_{\Omega_1}} \left( c_1 \nabla \hat{u}_1(x_n^{\Omega_1};\theta_1^{[k]}) \cdot \nabla \hat{u}_2(x_n^{\Omega_1};\theta_2) - f(x_n^{\Omega_1}) \hat{u}_2( x_n^{\Omega_1}; \theta_2 ) \right),
\end{array}
\end{equation*}
\endgroup
the learning task associated with the Dirichlet subproblem \eqref{IntfcProb-DN-DirichletSubProb-RitzForm-BndryPenalty} is now given by
\begin{equation}
	\theta_1^{[k]} = \operatorname*{arg\,min}_{\theta_1} L_{\Omega_1} ( \hat{u}_1 ) + \frac{\beta_D}{2} \left( L_{D_1} ( \hat{u}_1 ) + L_{\Gamma_D} ( \hat{u}_1, u_\Gamma^{[k]} ) \right),
	\label{IntfcProb-DN-DirichletSubProb-DeepRitzForm}
\end{equation}
while that of the Neumann subproblem \eqref{IntfcProb-DN-NeumannSubProb-RitzForm-BndryPenalty} takes on the form (referred to as \textit{Compensated Deep Ritz Method})
\begin{equation}
	\theta_2^{[k]} = \operatorname*{arg\,min}_{\theta_2} L_{\Omega_2} ( \hat{u}_2 ) + L_N ( \hat{u}_2, \hat{u}_1^{[k]} ) + L_{\Gamma_N} ( \hat{u}_2 ) + \frac{\beta_N}{2} \left( L_{D_1} ( \hat{u}_2 ) + L_{D_2}( \hat{u}_2 ) \right).
	\label{IntfcProb-DN-NeumannSubProb-DeepRitzForm}
\end{equation}
It is noteworthy that although the loss value of \eqref{IntfcProb-DN-DirichletSubProb-DeepRitzForm} continues to decrease as the training proceeds, the trained model is often observed to possess highly fluctuating errors at and near the boundaries \cite{wang2021understanding,dockhorn2019discussion,bajaj2021robust}, which eventually leads to a failure to approximate the interface flux and therefore hampers the subsequent amendment \eqref{IntfcProb-DN-NeumannSubProb-StrongForm} \cite{sun2022domain}. Fortunately, by employing our algorithm \eqref{IntfcProb-DN-NeumannSubProb-DeepRitzForm}, the Neumann subproblem can now be solved without explicitly enforcing the flux jump condition \eqref{IntfcProb-StrongForm}, thereby ensuring the convergence of outer iteration in the presence of inaccurate flux prediction. 

\begin{algorithm}[t!]
\caption{Dirichlet-Neumann Learning Algorithm}
\begin{algorithmic}
\STATE{\% \textit{Initialization} }
\STATE{-- specify the network architecture $\hat{u}_i(x;\theta_i)$ $(i=1$, 2) for each subproblem;}
\STATE{-- generate the Monte Carlo sampling points $X_\Gamma$, $X_{\Omega_i}$, and $X_{D_i}$ for $i=1$, 2; }
\STATE{\% \textit{Outer Iteration Loop} }
\STATE{Start with the initial guess $u_\Gamma^{[0]}$ of unknown solution values at interface $\Gamma$;}
\FOR{$k \gets 0$ to $K$ (maximum number of outer iterations)}
\WHILE{stopping criteria are not satisfied}
\STATE{\% \textit{Dirichlet Subproblem-Solving via Deep Ritz Method} }
\STATE{
\vspace{-.62cm}
\begingroup
\renewcommand*{\arraystretch}{1.1}
\begin{equation*}
\theta_1^{[k]} = \operatorname*{arg\,min}_{\theta_1} L_{\Omega_1}( \hat{u}_1 ) + \frac{\beta_D}{2} \left( L_{D_1}( \hat{u}_1 ) + L_{\Gamma}( \hat{u}_1, u_\Gamma^{[k]} ) \right)
\end{equation*}
\endgroup
\vspace{-0.45cm}
}
\STATE{\% \textit{Neumann Subproblem-Solving via Compensated Deep Ritz Method} }
\STATE{
\vspace{-.62cm}
\begingroup
\renewcommand*{\arraystretch}{1.1}
\begin{equation*}
\theta_2^{[k]} = \operatorname*{arg\,min}_{\theta_2} L_{\Omega_2}( \hat{u}_2 ) + L_N( \hat{u}_2, \hat{u}_1^{[k]} ) + L_{\Gamma_N}( \hat{u}_2 ) + \frac{\beta_N}{2} \left( L_{D_1}( \hat{u}_2 ) + L_{D_2}( \hat{u}_2 ) \right)
\end{equation*}
\endgroup
\vspace{-0.45cm}
}
\STATE{\% \textit{Update of Unknown Solution Values at Interface} }
\STATE{
\vspace{-.62cm}
\begingroup
\renewcommand*{\arraystretch}{1.1}
\begin{equation*}
u_\Gamma^{[k+1]}(x_n^\Gamma) = \rho \hat{u}_2(x_n^\Gamma;\theta_2^{[k]}) + (1-\rho) u_\Gamma^{[k]}(x_n^\Gamma), \ \ i=1,\cdots,N_\Gamma,
\end{equation*}
\endgroup
\vspace{-0.6cm}
}
\ENDWHILE
\ENDFOR
\end{algorithmic}
\label{Algorithm-DNLM}
\end{algorithm}

More specifically, the detailed iterative scheme is summarized in Algorithm \ref{Algorithm-DNLM}, where the stopping criteria can be constructed by measuring the difference in solutions between two consecutive iterations \cite{li2020deep,he2022mesh}. We also note that as an alternative to the deep Ritz method \eqref{IntfcProb-DN-DirichletSubProb-DeepRitzForm} \cite{yu2018deep}, the Dirichlet subproblem \eqref{IntfcProb-DN-DirichletSubProb-RitzForm-BndryPenalty} can be solved using the physics-informed neural networks (abbreviated as PINNs in what follows) \cite{raissi2019physics}, which is known to empirically work better for problems with sufficient smooth solutions. To be precise, by incorporating the residual of the equation \eqref{IntfcProb-StrongForm} into the loss function 
\begin{equation*}
	L^{\textnormal{PINNs}}_{\Omega_1} ( \hat{u}_1 ) = \frac{1}{N_{\Omega_1}} \sum_{n=1}^{N_{\Omega_1}} \left| - \nabla \cdot \big( c_1 \nabla \hat{u}_1(x_n^{\Omega_1};\theta_1) \big) + \hat{u}_1(x_n^{\Omega_1};\theta_1) - f(x_n^{\Omega_1})\right|^2
\end{equation*}
the learning task of Dirichlet subproblem \eqref{IntfcProb-DN-DirichletSubProb-RitzForm-BndryPenalty} can alternatively be formulated as
\begin{equation*}
	\theta_1^{[k]} = \operatorname*{arg\,min}_{\theta_1} L^{\textnormal{PINNs}}_{\Omega_1} ( \hat{u}_1) + \frac{\beta_D}{2} \left( L_{D_1} ( \hat{u}_1 ) + L_{\Gamma_D} ( \hat{u}_1 ) \right).
\end{equation*}
Similar in spirit, domain decomposition leads to simpler functions to be learned on each subdomain, therefore the solution of Neumann subproblem \eqref{IntfcProb-DN-NeumannSubProb-RitzForm} on $\Omega_2$ can be assumed smooth enough. This allows us to incorporate an additional loss term
\begin{equation}
	L^{\textnormal{PINNs}}_{\Omega_2} ( \hat{u}_2 ) = \frac{1}{N_{\Omega_2}} \sum_{n=1}^{N_{\Omega_2}} \left| - \nabla \cdot \big( c_2 \nabla \hat{u}_2(x_n^{\Omega_2};\theta_2) \big) + \hat{u}_2(x_n^{\Omega_2};\theta_2) - f(x_n^{\Omega_2})\right|^2
	\label{IntfcProb-DN-NeumannSubProb-Modified-TrainLoss}
\end{equation}
into the learning task of our extended Neumann subproblem \eqref{IntfcProb-DN-NeumannSubProb-DeepRitzForm}. To put it differently, \eqref{IntfcProb-DN-NeumannSubProb-Modified-TrainLoss} can be regarded as a regularization term that helps improve the approximation accuracy and prevent the training process \eqref{IntfcProb-DN-NeumannSubProb-DeepRitzForm} from getting trapped in trivial solutions especially when $c_2\gg 1$.

Moreover, our Dirichlet-Neumann learning algorithm can be easily modified to adapt to a parallel computing environment, which is based on the theoretical work \cite{yang1997parallel} (see also \autoref{fig-computation-graph}) and is left for future investigation.

\begin{remark}\label{Remark-Err-Intfc}
Note that when solving the Neumann subproblem, the flux transmission condition in \eqref{IntfcProb-DN-NeumannSubProb-StrongForm}, \textit{i.e.},
\begin{equation*}
	c_2 \nabla u_2^{[k]} \cdot \bm{n}_2 = - q - c_1 \nabla u_1^{[k]} \cdot \bm{n}_1 \ \ \ \textnormal{on}\ \Gamma
\end{equation*}
can be enforced using either our proposed algorithm or the DeepDDM scheme \cite{li2020deep}, where both the neural networks are trained to approximately satisfy this jump condition and eventually ended up with 
\begin{equation*}
	c_2 \nabla \hat{u}_2^{[k]} \cdot \bm{n}_2 \approx - q - c_1 \nabla \hat{u}_1^{[k]} \cdot \bm{n}_1 \ \ \ \textnormal{on}\ \Gamma.
\end{equation*}
As such, we immediately arrive at the error estimation
\begin{equation}
	\nabla \hat{u}_2^{[k]} - \nabla u_2^{[k]} \approx \frac{c_1}{c_2} \big( \nabla \hat{u}_1^{[k]} - \nabla u_1^{[k]} \big) \ \ \ \textnormal{on}\ \Gamma
	\label{Err-Intfc-DNLM-DeepDDM}
\end{equation}
which shows that the error incurred by an inaccurate right-hand-side term in \eqref{Err-Intfc-DNLM-DeepDDM} may propagate to nearby interior points during training, and therefore eventually leads to failure models of the trained network solution \cite{daw2022rethinking}. 

Unfortunately, the DeepDDM method \cite{li2020deep} and other related work \cite{he2022mesh,jagtap2020extended,jagtap2020conservative} rely on on a direct computation of the Dirichlet subproblem \eqref{IntfcProb-DN-DirichletSubProb-StrongForm} using neural networks, followed by a straightforward evaluation of the trained network solutions at interface to execute the subsequent operations. However, the approximation errors for Dirichlet subproblem are typically found to concentrate and fluctuate at and near the boundary \cite{dockhorn2019discussion,bajaj2021robust,wang2021understanding,sun2022domain}, hence the representation of $\nabla u_1^{[k]}$ using $\nabla \hat{u}_1^{[k]}$ along the interface may be of low accuracy. Consequently, the iterative solutions would fail to converge to the exact solution once the error $\nabla \hat{u}_1^{[k]} - \nabla u_1^{[k]}$ could not be eliminated by the factor $c_1/c_2$, \textit{e.g.}, $c_1 \approx c_2$ or $c_1 = c_2$ (see \autoref{Section-Experiment} for more experimental evidences).

On the contrary, our proposed scheme \eqref{IntfcProb-DN-NeumannSubProb-DeepRitzForm} is based on the variational formulation of Dirichlet subproblem \eqref{IntfcProb-DN-DirichletSubProb-StrongForm}, which has a satisfactory accuracy in representing $\nabla u_1^{[k]} |_\Gamma$ in the weak sense and therefore substantially differs from other related methods. In other words, the right-hand-side term of \eqref{Err-Intfc-DNLM-DeepDDM} is small whenever the coefficients are set to be $c_1\ll c_2$ or $c_1 \approx c_2$ (see \autoref{Section-Experiment} for more details).
\end{remark}



\section{Numerical Experiments}\label{Section-Experiment}

In this section, numerical experiments on a series of elliptic interface problems \eqref{IntfcProb-StrongForm} are carried out to demonstrate the effectiveness of our Dirichlet-Neumann learning algorithm (abbreviated as DNLA in what follows, with text in bracket indicating the type of deep learning solver adopted for solving the Dirichlet subproblem \eqref{IntfcProb-DN-DirichletSubProb-StrongForm}). In all experiments, the network architecture deployed for local problem is a fully connected neural network, while the design of more sophisticated structures \cite{hu2022augmented,hu2022discontinuity} is left for future investigation. Moreover, due to the smoothness requirement of extension operator \eqref{Extension-DiscreteLevel} (see also \eqref{Extension-ContinuousLevel}), the hyperbolic tangent activation function is used rather than the ReLU activation function  \cite{goodfellow2016deep}. 

As a comparison to our method, we also conduct experiments using the most straightforward learning algorithm, that is, employing the PINNs for solving all the decomposed subproblems \eqref{IntfcProb-DN-StrongForm} as proposed in \cite{li2020deep}, which is referred to as DeepDDM. For a fair comparison, the same relaxation parameter, \textit{i.e.}, $\rho$ in \eqref{IntfcProb-DN-StrongForm} and \eqref{IntfcProb-DN-Update}, is set for both methods, and the mean value and standard deviation of the relative $L^2$ error evaluated on $100\times100$ uniform grid points (testing dataset)
\begingroup
\renewcommand*{\arraystretch}{1.3}
\begin{equation*}
\epsilon = \frac{ \lVert u - \hat{u}^{[k]} \rVert_{L^2(\Omega)} }{ \lVert u \rVert_{L^2(\Omega)} }
\ \ \ 
\text{where}
\ \ \ 
\hat{u}^{[k]}(x;\theta) = \left\{
\begin{array}{cl}
\hat{u}_1(x;\theta_1^{[k]})\ \ & \text{if}\ x\in\Omega_1,\\
\hat{u}_2(x;\theta_2^{[k]})\ \ & \text{if}\ x\in\Omega_2,
\end{array}\right.
\end{equation*}
\endgroup
are reported over 5 independent runs along the outer iterations, followed by a comprehensive study that validates our discussion in remark \ref{Remark-Err-Intfc}. When the maximum number of outer iteration is reached or the stopping criteria are met, the resulting network solution is denoted by $\hat{u}(x;\theta)$ instead of $\hat{u}^{[K]}(x;\theta)$ or $\hat{u}(x;\theta^{[K]})$ for simplicity. 

More detailed experimental setups, \textit{e.g.}, the depth and width of the network solution, the training and testing datasets, are briefly summarized in \textbf{Appendix C}. Unless otherwise specified, we shall use the following hyperparameter configurations. The AdamW optimizer \cite{kingma2014adam} is adopted with an initial learning rate set to be $0.1$, which is then multiplied by $0.1$ when the training process reaches $60\%$ and $80\%$. In each outer iteration, the Dirichlet or Neumann subproblem-solving, namely, \eqref{IntfcProb-DN-DirichletSubProb-DeepRitzForm} or \eqref{IntfcProb-DN-NeumannSubProb-DeepRitzForm} is terminated after $3k$ and $1k$ epochs, respectively, and the model solution with the minimum training loss is saved to execute the subsequent operations. All experiments are implemented using PyTorch \cite{paszke2017automatic} on Nvidia GeForce RTX 3090 graphic cards. It is of particular noteworthy that the penalty coefficient $\beta_N$ should be increased as the coefficient $c_2$ increases (see \textbf{Appendix C}), which can be easily concluded from our analysis \eqref{NeumannSubProb-BndryPenalty-StrongForm} but has not been addressed in the literature.

\subsection{Circle Interface in Two Dimension}

\begin{figure}[t!]
\centering
\includegraphics[width=0.212\textwidth]{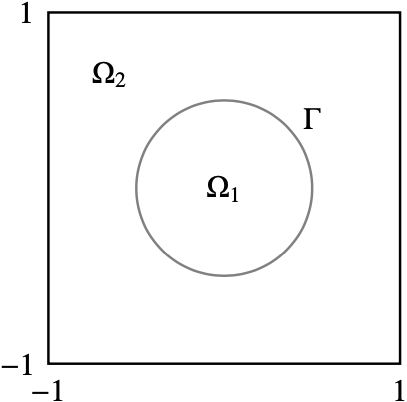}
\hspace{0.3cm}
\includegraphics[width=0.263\textwidth]{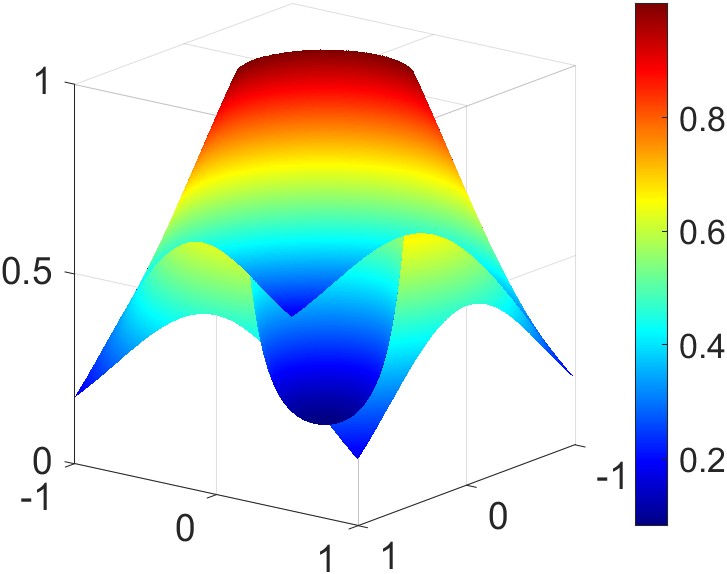}
\hspace{0.3cm}
\includegraphics[width=0.263\textwidth]{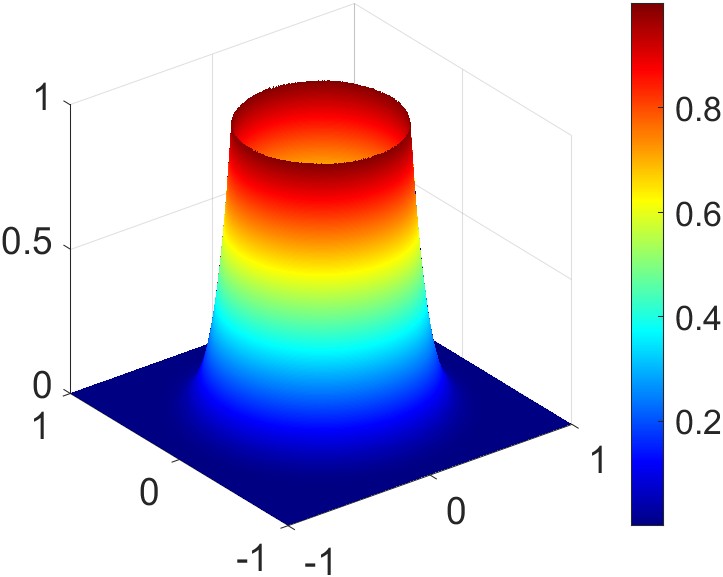}
\caption{Geometry of interface, exact solution with $(c_1,c_2)=(1,10^3)$ and $(1,1)$ for \eqref{NumXmp1-Circle}.}
\label{fig-Xmp1-exact-solution}
\vspace{-0.3cm}
\end{figure}

\begin{table}[t!]
\caption{Error profiles $ \hat{u}(x,y;\theta) - u(x,y) $ of different methods for example \eqref{NumXmp1-Circle} (see also \textbf{Appendix C}).}
\centering
\begin{tabular}{ c | c | c | c }
\toprule
Coefficients & DeepDDM & DNLA (PINNs) & DNLA (deep Ritz) \\ 
\midrule
\makecell{$(1, 10^3)$ \\ with \\ $\rho = 1$}  &
\begin{minipage}{.23\textwidth}
\centering
\includegraphics[width=0.9\linewidth]{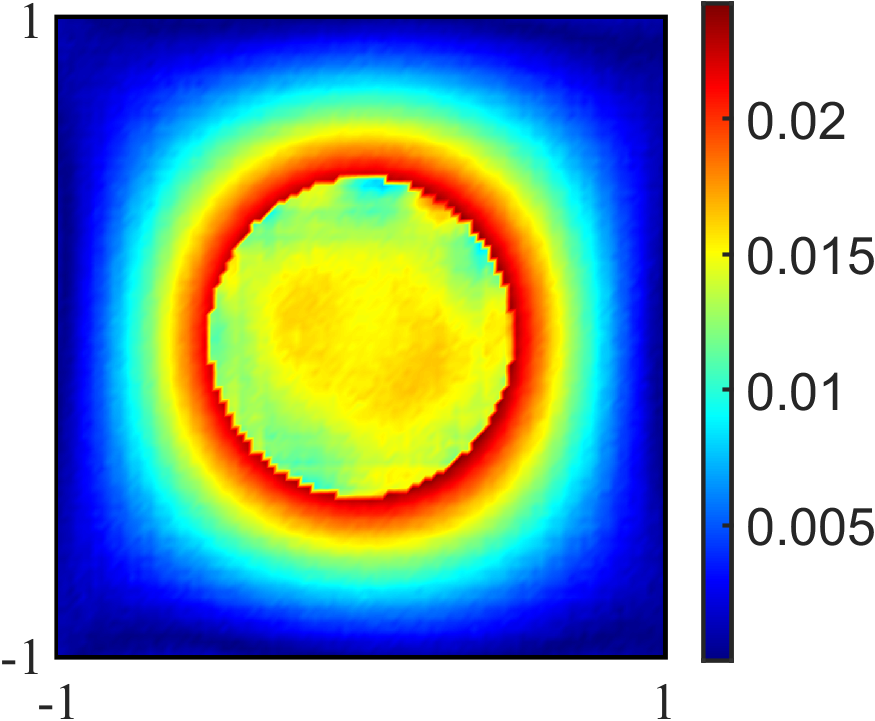}\vspace*{-0.1cm}
\centerline{\small{$K=2$}}
\end{minipage}
& 
\begin{minipage}{.23\textwidth}
\centering
\includegraphics[width=0.9\linewidth]{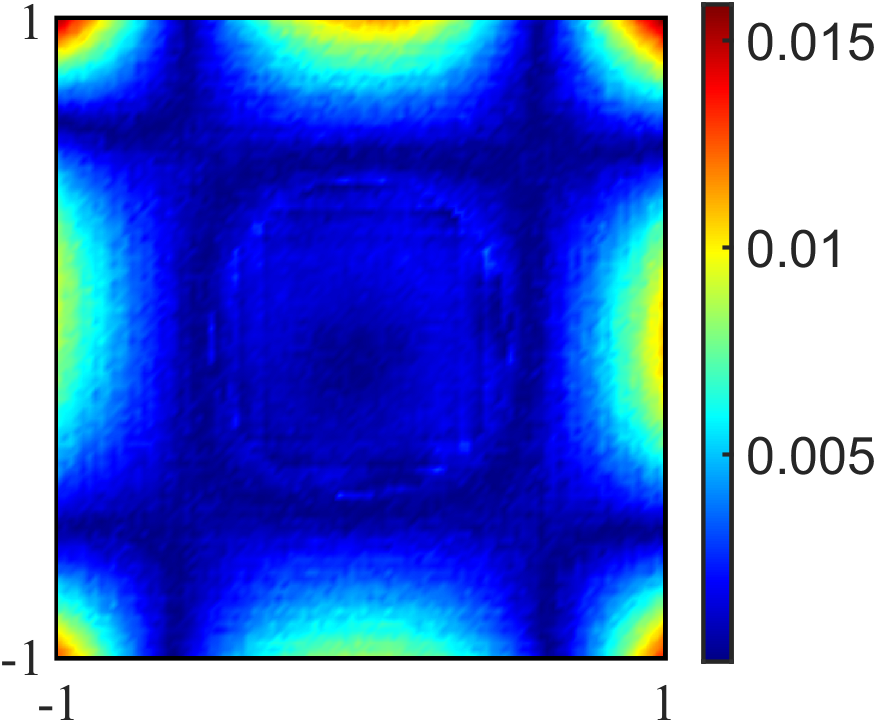}\vspace*{-0.1cm}
\centerline{\small{$K=2$}}
\end{minipage}
& 
\begin{minipage}{.23\textwidth}
\centering
\includegraphics[width=0.9\linewidth]{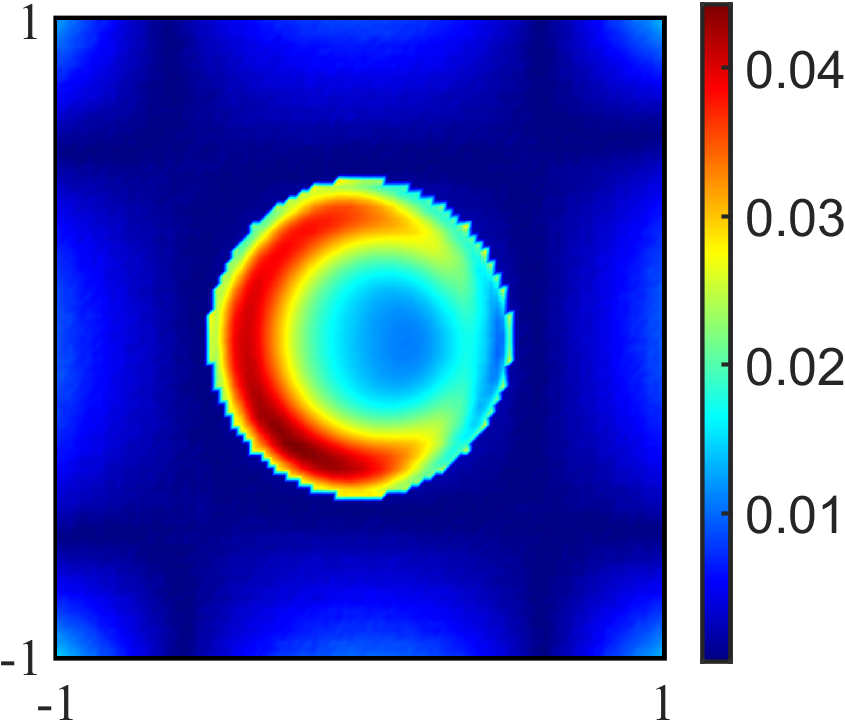}\vspace*{-0.1cm}
\centerline{\small{$K=2$}}
\end{minipage}
\\ 
\midrule
\makecell{$(1, 1)$ \\ with \\ $\rho = 0.5$} &
\begin{minipage}{.23\textwidth}
\centering
\includegraphics[width=0.88\linewidth]{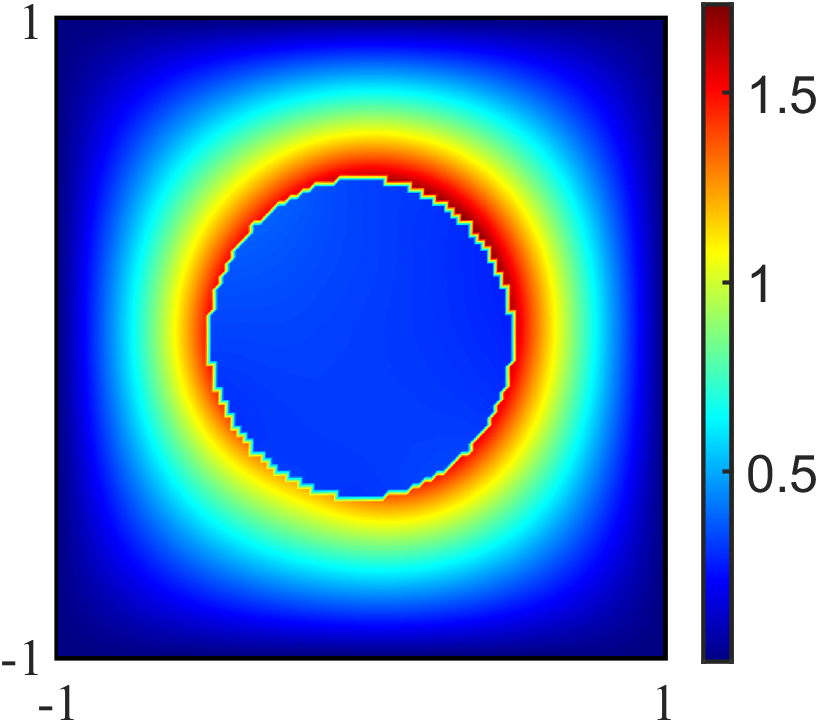}\vspace*{-0.1cm}
\centerline{\small{$K=15$}}
\end{minipage}
& 
\begin{minipage}{.23\textwidth}
\centering
\includegraphics[width=0.9\linewidth]{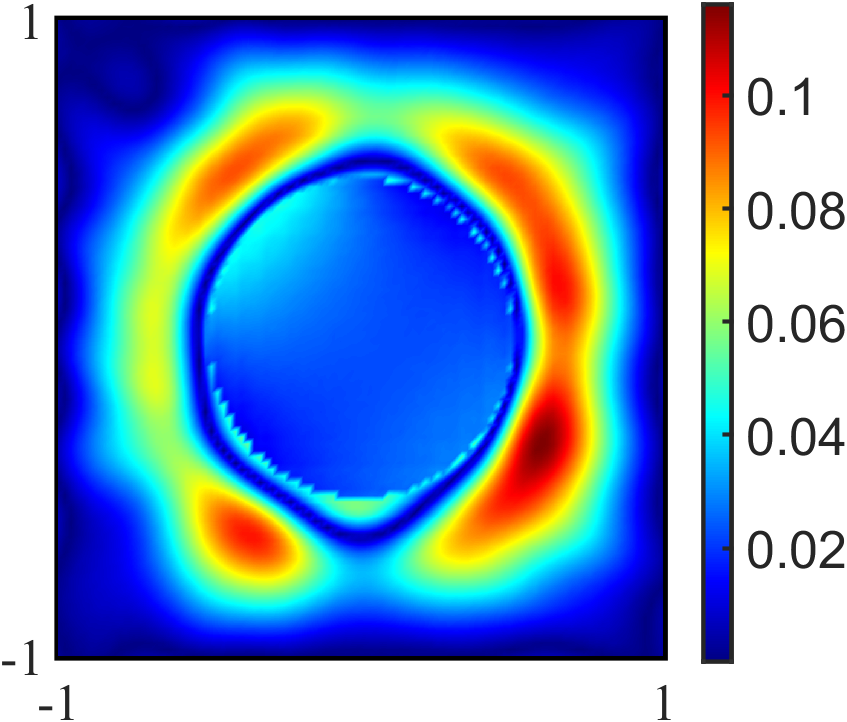}\vspace*{-0.1cm}
\centerline{\small{$K=13$}}
\end{minipage}
& 
\begin{minipage}{.23\textwidth}
\centering
\includegraphics[width=0.9\linewidth]{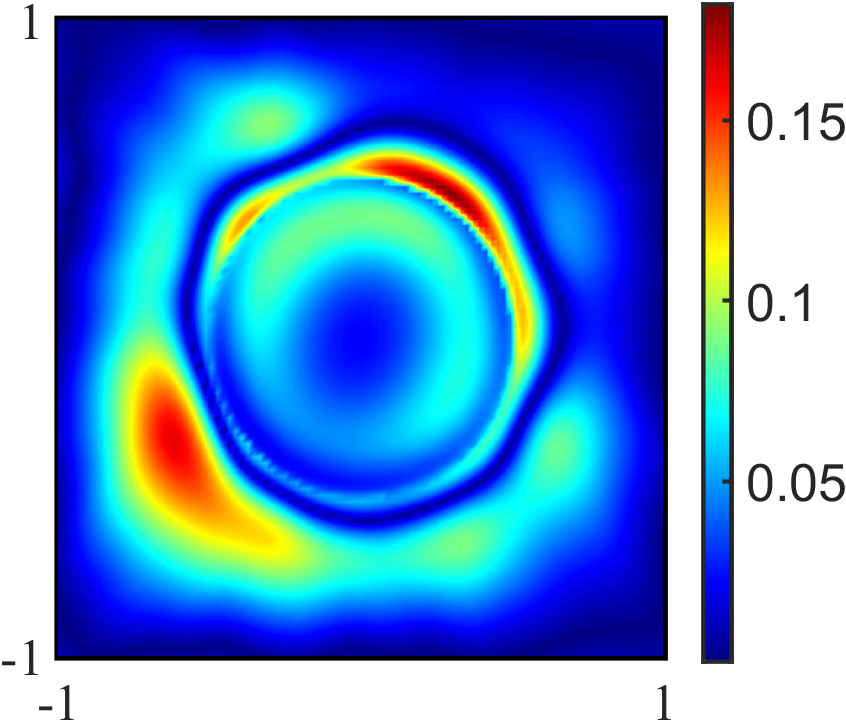}\vspace*{-0.1cm}
\centerline{\small{$K=13$}}
\end{minipage}
\\
\bottomrule
\end{tabular}
\label{table-Xmp1-error-profiles}

\vspace{0.35cm}

\caption{Comparison of different methods in terms of relative $L^2$ errors for example \eqref{NumXmp1-Circle}.}
\centering
\renewcommand{\arraystretch}{1.1}
\begin{tabular}{ | p{1cm} || c | c | c | c |   }
\hline
\multicolumn{2}{|c|}{ \diagbox[width=13em]{Coefficients}{Outer Iteration} } & 2 & 4 & 10   \\
\hline
\hline
\multirow{3}{*}{$\!(1,10^3)\!$} & DeepDDM & 0.013 $\pm$ 0.000 & 0.014 $\pm$ 0.001 & -  \\ 
\cline{2-5}
& DNLA (PINNs) & 0.004 $\pm$ 0.001 & 0.004 $\pm$ 0.000 & -  \\ 
\cline{2-5}
& \!\!\!\!\! DNLA (deep Ritz)\!\!\! & 0.014 $\pm$ 0.001 & 0.014 $\pm$ 0.001 & -  \\ 
\hline		
\hline
\multirow{3}{*}{\ $(1,1)$} & DeepDDM & 11.126 $\pm$ 0.096 & 2.997 $\pm$ 0.177 & 0.910 $\pm$ 0.356 \\
\cline{2-5}
& DNLA (PINNs) & 11.110 $\pm$ 0.245 & 2.720 $\pm$ 0.067 & 0.095 $\pm$ 0.011  \\ 
\cline{2-5}
& \!\!\!\!\! DNLA (deep Ritz)\!\!\! & 9.456 $\pm$ 4.506 & 2.322 $\pm$ 1.092 & 0.122 $\pm$ 0.023 \\ 
\hline		                                                     
\end{tabular}
\label{table-Xmp1-relative-errors}

\vspace{0.35cm}

\caption{Error profiles in terms of the pointwise $\ell^2$-norm of $ \nabla \hat{u}(x,y;\theta) - \nabla u(x,y) $ for example \eqref{NumXmp1-Circle}.}
\centering
\begin{tabular}{ c | c | c | c }
\toprule
Coefficients & DeepDDM  & DNLM (PINNs)  & DNLM (deep Ritz)  \\ 
\midrule
\makecell{$(1, 1)$ \\ with \\ $\rho = 0.5$} &
\begin{minipage}{.23\textwidth}
\centering
\includegraphics[width=0.9\linewidth]{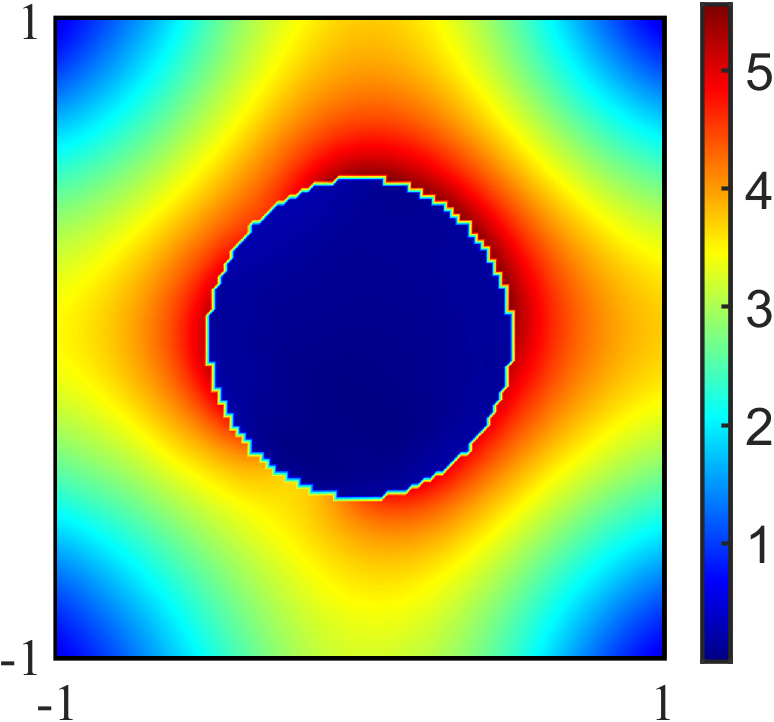}
\centerline{\small{$K=15$}}
\end{minipage}
& 
\begin{minipage}{.23\textwidth}
\centering
\includegraphics[width=0.9\linewidth]{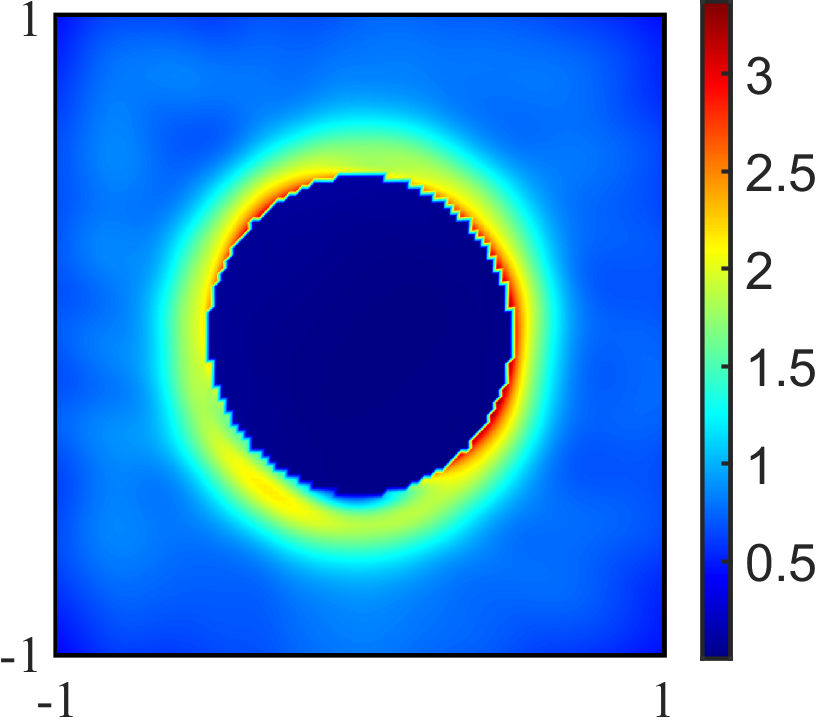}
\centerline{\small{$K=13$}}
\end{minipage}
& 
\begin{minipage}{.23\textwidth}
\centering
\includegraphics[width=0.9\linewidth]{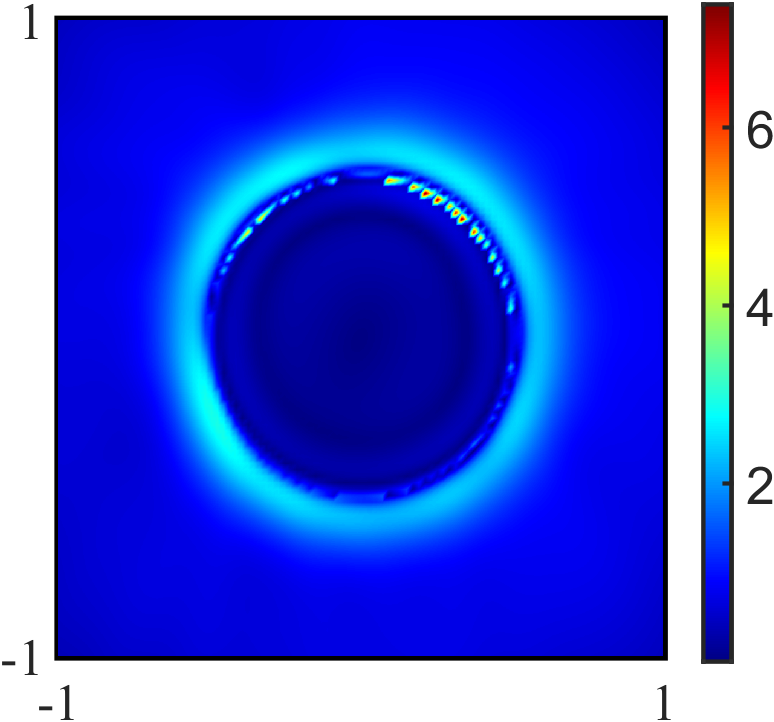}
\centerline{\small{$K=13$}}
\end{minipage}
\\ 
\bottomrule
\end{tabular}
\label{table-Xmp1-graderror-profiles}
\vspace{-0.2cm}
\end{table}

To begin with, we consider an elliptic interface problem \eqref{IntfcProb-StrongForm} whose solution is continuous \cite{li2006immersed}, namely,
\begingroup
\renewcommand*{\arraystretch}{1.1}
\begin{equation}
\begin{array}{cl}
-\nabla \cdot \left( c_i \nabla u_i(x,y)  \right) = f(x,y)\ \ & \text{in}\ \Omega_i,\\
u_i(x,y) = g(x,y)\ \ & \text{on}\ \partial \Omega_i\cap\partial \Omega, \\
u_1(x,y) = u_2(x,y)\ \text{and}\ \llbracket c(x) \nabla u(x)\cdot\bm{n} \rrbracket = q(x,y) \ \ & \text{on}\ \Gamma,
\end{array}
\label{NumXmp1-Circle}
\end{equation}
\endgroup
where the entire computation domain is $\Omega=(-1,1)\times(-1,1)$ and the interface $\Gamma = \big\{ (x,y) \,|\, x^2+y^2=0.25 \big\}$ is a circle centered at point $(0,0)$ with radius $R=0.5$. The source term $f(x,y)$ and the boundary data $g(x,y)$ on each subdomain, \textit{i.e.}, $\Omega_1 = \big\{ (x,y) \,|\, x^2+y^2<0.25 \big\}$ and $\Omega_2=\Omega\setminus\Omega_1$, are calculated from the exact solution 
\begingroup
\renewcommand*{\arraystretch}{1.1}
\begin{equation*}
u(x,y) = \left\{
\begin{array}{cl}
c_1^{-1} e^{ 10 (r^2-R^2)}\ \ & \text{if}\ r<R \ \ \ \ (\text{or in}\ \Omega_1),\\
c_2^{-1}e^{ 10 (R^2-r^2)} +\big( c_1^{-1} - c_2^{-1}\big) e^{r^2-R^2} \ \ & \text{otherwise} \ (\text{or in}\ \Omega_2),
\end{array}\right.
\end{equation*}
\endgroup
where $r=\sqrt{x^2+y^2}$. More specifically, \autoref{fig-Xmp1-exact-solution} depicts the exact solution $u(x,y)$ of \eqref{NumXmp1-Circle} when the coefficients are given by $(c_1,c_2) = (1,10^3)$ and $(1,1)$ respectively, both of which should be solved reasonably well so as to meet the robustness requirement with respect to the varying coefficients. The initial guess of the unknown solution's value at interface is set to be $u_\Gamma^{[0]} = -1000x(x-1)y(y-1)+1$.

\autoref{table-Xmp1-error-profiles} shows the error profiles $| \hat{u}(x,y;\theta) - u(x,y) |$ of different methods in a typical simulation, while the relative $L^2$ errors (mean $\pm$ standard deviation over 5 independent runs) are displayed in \autoref{table-Xmp1-relative-errors}. Clearly, the elliptic interface problem \eqref{NumXmp1-Circle} with high-contrast coefficient $(c_1,c_2)=(1,10^3)$ can be solved with desired accuracy through the use of the DeepDDM scheme \cite{li2020deep}, however, it fails to be effective when the coefficient degenerates to the case of $(c_1,c_2)=$ $(1,1)$. On the other hand, our proposed methods can achieve promising performance no matter $c_1\ll c_2$ or $c_1=c_2$. 

More specifically, when solving the Dirichlet subproblem using neural networks, the trained model is often found to satisfy the underlying equations but exhibit erroneous Neumann traces (see \autoref{table-Xmp1-graderror-profiles}). As such, a direct computation of $\nabla \hat{u}(x,y;\theta)$ at interface would lead to an erroneous right-hand-side term in \eqref{Err-Intfc-DNLM-DeepDDM}, which explains the failure of DeepDDM when $c_1 = c_2$. On the contrary, our methods is based on the variational formulation of \eqref{DirichletSubProb-BndryPenalty-StrongForm} and \eqref{IntfcProb-DN-DirichletSubProb-StrongForm} with vanishing terms $\hat{u}_1^{[k]}$ and $u_1^{[k]}$, \textit{i.e.},
\begin{equation}
	\int_{\Omega_1} \big( \nabla \hat{u}_1^{[k]} - \nabla u_1^{[k]} \big) \cdot \nabla v_1 \, dx = \int_{\Gamma} \big( \nabla \hat{u}_1^{[k]} - \nabla u_1^{[k]} \big) \cdot \bm{n}_1 v_1 \,ds\ \ \ \textnormal{for any}\ v_1\in V_1,
	\label{Err-Intfc-DNLM-DeepDDM-Xmp1}
\end{equation}
which allows the flux transmission without evaluating $\nabla \hat{u}^{[k]}_1$ at interface, and therefore substantially differs from other deep learning-based methods (see section \ref{Section-RelatedWork}). The experimental results in \autoref{table-Xmp1-graderror-profiles} further demonstrate that our proposed algorithms are still effective in the presence of erroneous flux data. Furthermore, it can be inferred from \eqref{Err-Intfc-DNLM-DeepDDM-Xmp1} that the Neumann subproblem solver could benefit from a good approximation of $\nabla u_1^{[k]}$ inside the subdomain $\Omega_1$, and hence DNLA (PINNs) empirically performs better than DNLA (deep Ritz) (see \autoref{table-Xmp1-error-profiles}).


\subsection{Zigzag Interface in Two Dimension}

\begin{figure}[t!]
\centering
\includegraphics[width=0.212\textwidth]{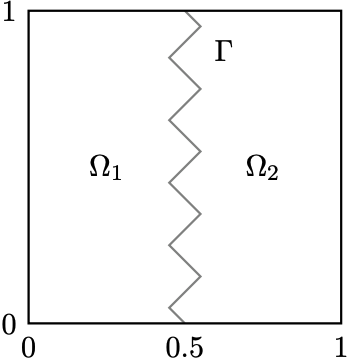}
\hspace{0.3cm}
\includegraphics[width=0.272\textwidth]{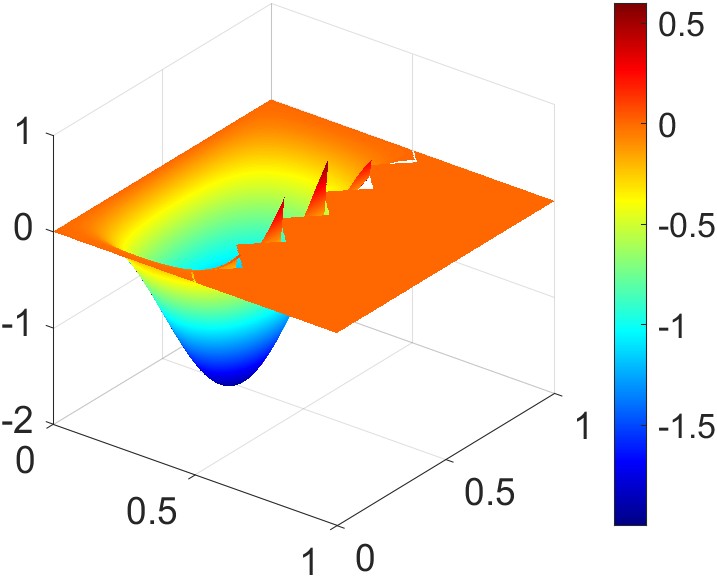}
\hspace{0.3cm}
\includegraphics[width=0.272\textwidth]{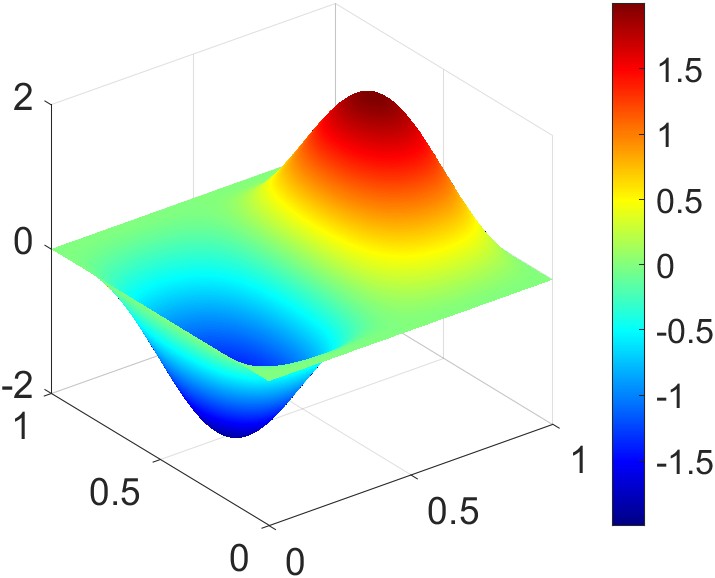}
\caption{Geometry of interface, exact solution with $(c_1,c_2)=(1,10^3)$ and $(1,1)$ for \eqref{NumXmp2-Zigzag}.}
\label{Xmp2-exact-solution}
\vspace{-0.3cm}
\end{figure}

\begin{table}[t!]
\caption{Error profiles $ \vert \hat{u}(x,y;\theta) - u(x,y)\vert $ of different methods for example \eqref{NumXmp2-Zigzag} (see also \textbf{Appendix C}).}
\centering
\begin{tabular}{ c | c | c | c }
\toprule
Coefficients & DeepDDM & DNLA (PINNs) & DNLA (deep Ritz) \\ 
\midrule
\makecell{$(1, 10^3)$ \\ with \\ $\rho = 1.0$}  &
\begin{minipage}{.23\textwidth}
\centering
\includegraphics[width=.9\linewidth]{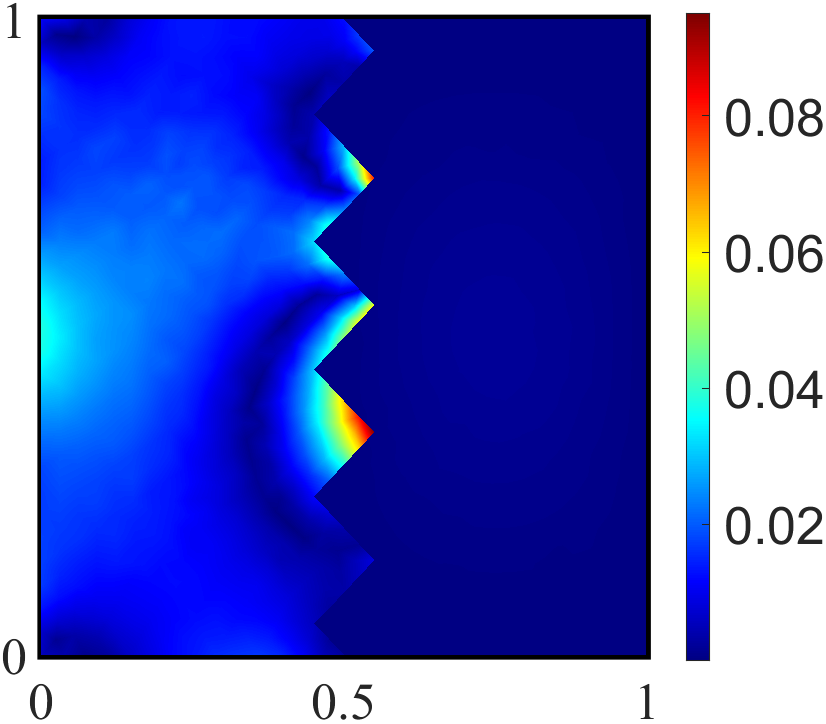}\vspace*{-0.1cm}
\centerline{\small{$(K=2)$}}
\end{minipage}
& 
\begin{minipage}{.23\textwidth}
\centering
\includegraphics[width=.9\linewidth]{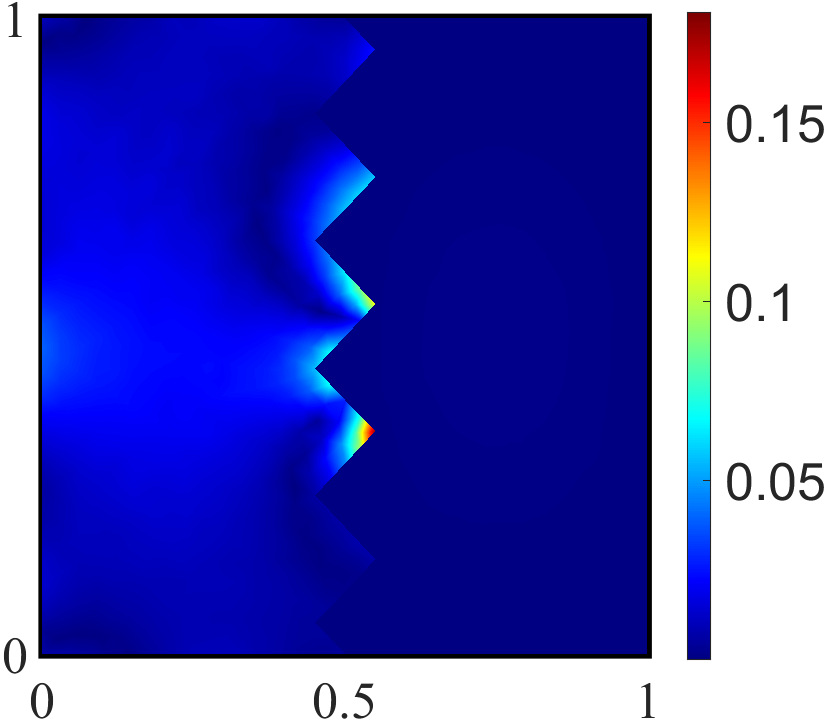}\vspace*{-0.1cm}
\centerline{\small{$(K=2)$}}
\end{minipage}
& 
\begin{minipage}{.23\textwidth}
\centering
\includegraphics[width=.9\linewidth]{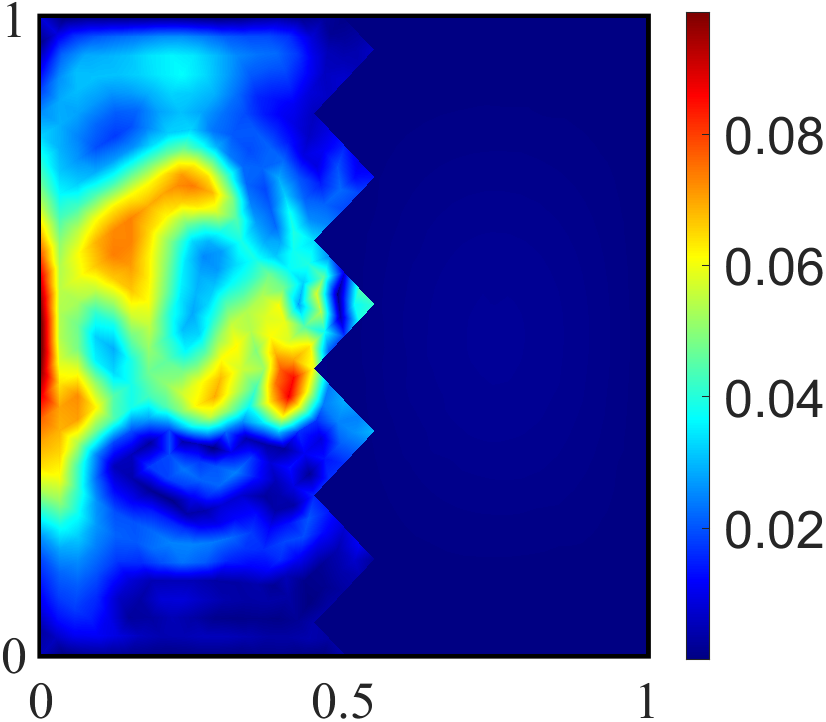}\vspace*{-0.1cm}
\centerline{\small{$(K=2)$}}
\end{minipage}
\\ 
\midrule
\makecell{$(c_1,c_2)=(1, 1)$ \\ $\rho = 0.5$} &
\begin{minipage}{.23\textwidth}
\centering
\includegraphics[width=0.87\linewidth]{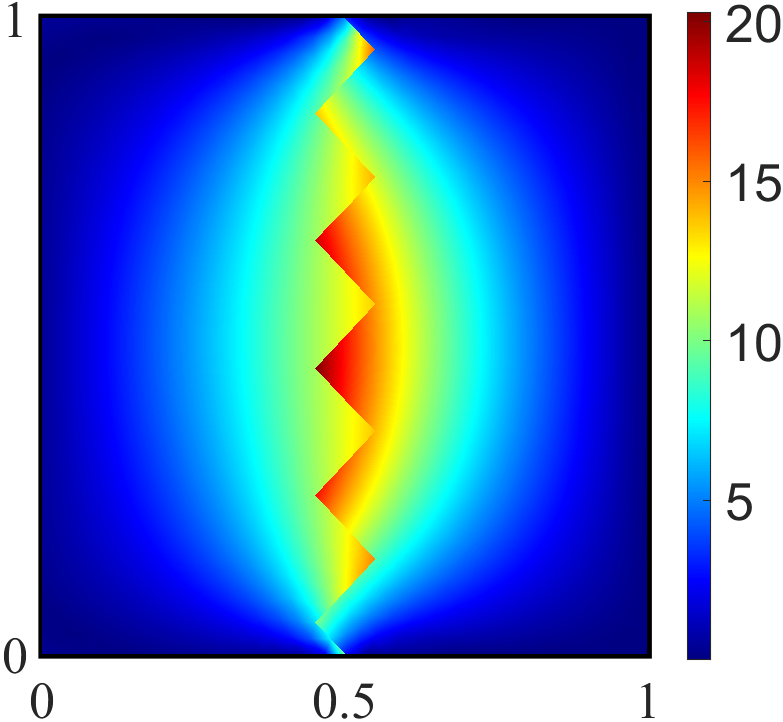}\vspace*{-0.1cm}
\centerline{\small{$(K=15)$}}
\end{minipage}
& 
\begin{minipage}{.23\textwidth}
\centering
\includegraphics[width=.9\linewidth]{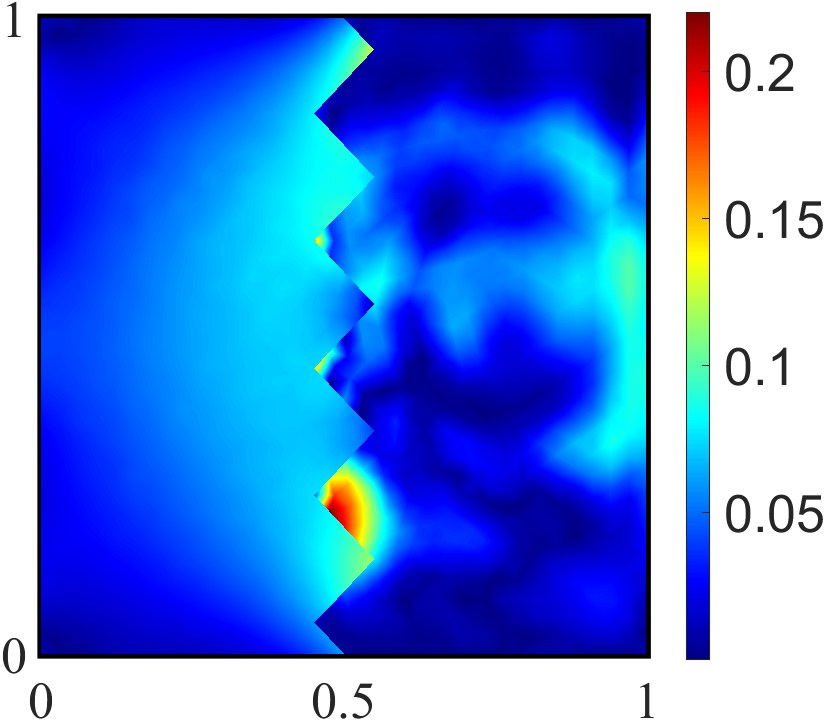}\vspace*{-0.1cm}
\centerline{\small{$(K=8)$}}
\end{minipage}
& 
\begin{minipage}{.23\textwidth}
\centering
\includegraphics[width=.92\linewidth]{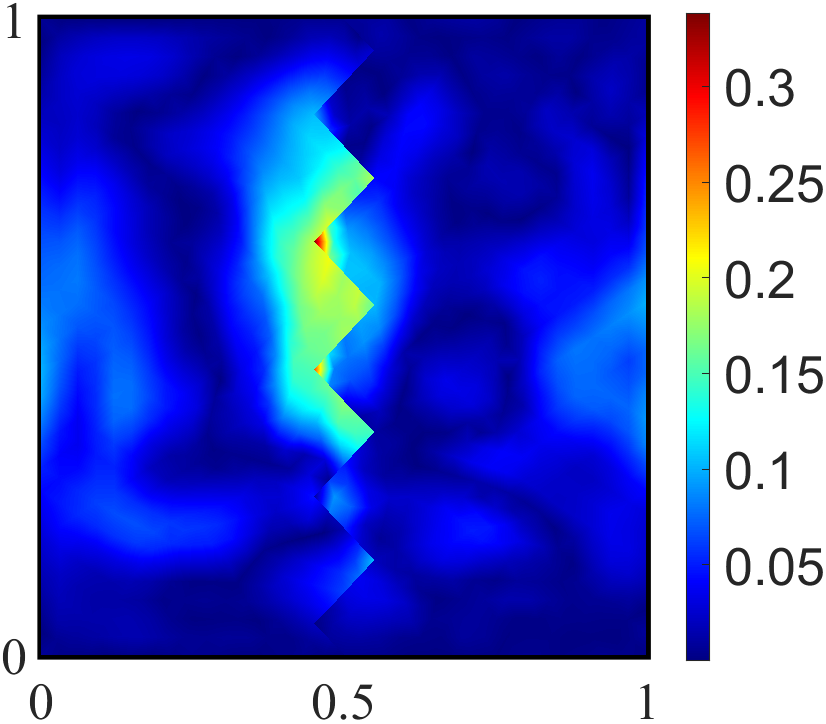}\vspace*{-0.1cm}
\centerline{\small{$(K=12)$}}
\end{minipage}
\\
\bottomrule
\end{tabular}
\label{table-Xmp2-error-profiles}

\vspace{0.35cm}

\caption{Comparison of different methods in terms of relative $L^2$ errors for example \eqref{NumXmp2-Zigzag}.}
\centering
\renewcommand{\arraystretch}{1.1}
\begin{tabular}{ | p{1cm} || c | c | c | c | c |  }
\hline
\multicolumn{2}{|c|}{ \diagbox[width=12.8em]{Coefficients}{Outer Iteration} }  & 2 & 4 & 8 \\
\hline
\hline
\multirow{3}{*}{\centerline{$\!\!\!(1,10^3)\!\!\!$}} & DeepDDM & \!\!\! 0.054 \!$\pm$\! 0.023 \!\!\! & \!\!\! 0.041 \!$\pm$\! 0.025 \!\!\! & - \\ 
\cline{2-5}
& DNLA (PINNs) & \!\!\! 0.059 \!$\pm$\! 0.026 \!\!\! & \!\!\! 0.061 \!$\pm$\! 0.019 \!\!\! & - \\ 
\cline{2-5}
& \!\!\!\! DNLA (deep Ritz)\!\!\! & \!\!\! 0.041 \!$\pm$\! 0.006 \!\!\! & \!\!\! 0.040 \!$\pm$\! 0.009 \!\!\! & - \\ 
\hline	
\hline
\multirow{3}{*}{\centerline{$\!\!\!(1,1)\!\!\!$}} & DeepDDM & \!\!\! 8.453 \!$\pm$\! 0.372 \!\!\! & \!\!\! 9.103 \!$\pm$\! 0.715 \!\!\! & \!\!\! 10.569 \!$\pm$\! 0.682 \!\!\! \\ 
\cline{2-5}
& DNLA (PINNs) & \!\!\! 2.132 \!$\pm$\! 0.772 \!\!\! & \!\!\! 0.157 \!$\pm$\! 0.036 \!\!\! & \!\!\! 0.132 \!$\pm$\! 0.033 \!\!\! \\ 
\cline{2-5}
& \!\!\!\! DNLA (deep Ritz)\!\!\! & \!\!\! 21.022 \!$\pm$\! 4.628 \!\!\! & \!\!\! 5.183 \!$\pm$\! 1.469 \!\!\! & \!\!\! 0.814 \!$\pm$\! 0.030 \!\!\! \\ 
\hline		                                                     
\end{tabular}
\label{table-Xmp2-relative-errors}

\vspace{0.35cm}

\caption{Error profiles in terms of the pointwise $\ell^2$-norm of $\nabla \hat{u}(x,y;\theta) - \nabla u(x,y) $ for example \eqref{NumXmp2-Zigzag}.}
\centering
\begin{tabular}{ c | c | c | c }
\toprule
Coefficients & DeepDDM & DNLA (PINNs) & DNLA (deep Ritz) \\ 
\midrule
\makecell{$(1, 1)$ \\ with \\ $\rho = 0.5$} &
\begin{minipage}{.23\textwidth}
\centering
\includegraphics[width=.9\linewidth]{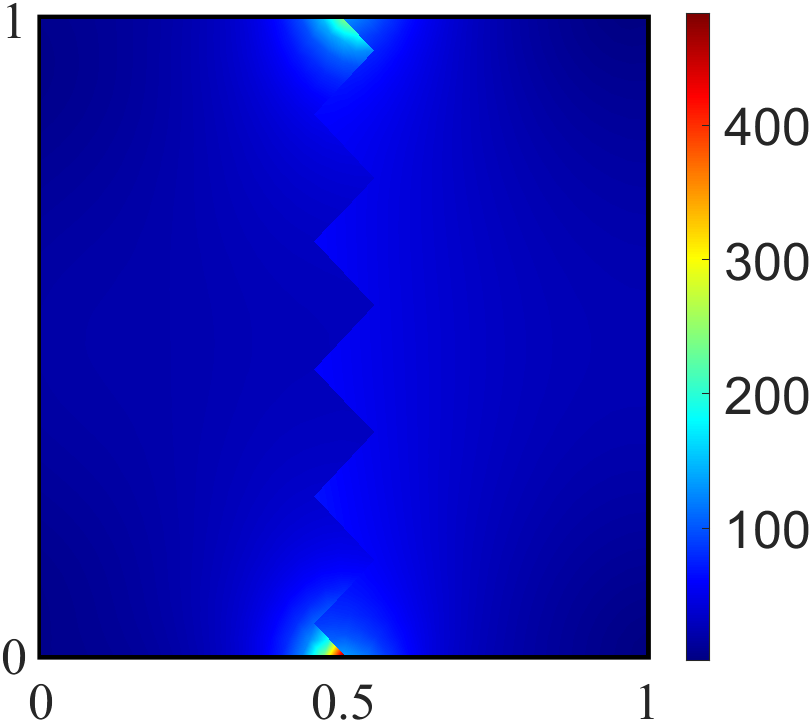}\vspace*{-0.1cm}
\centerline{\small{$(K=15)$}}
\end{minipage}
& 
\begin{minipage}{.23\textwidth}
\centering
\includegraphics[width=.86\linewidth]{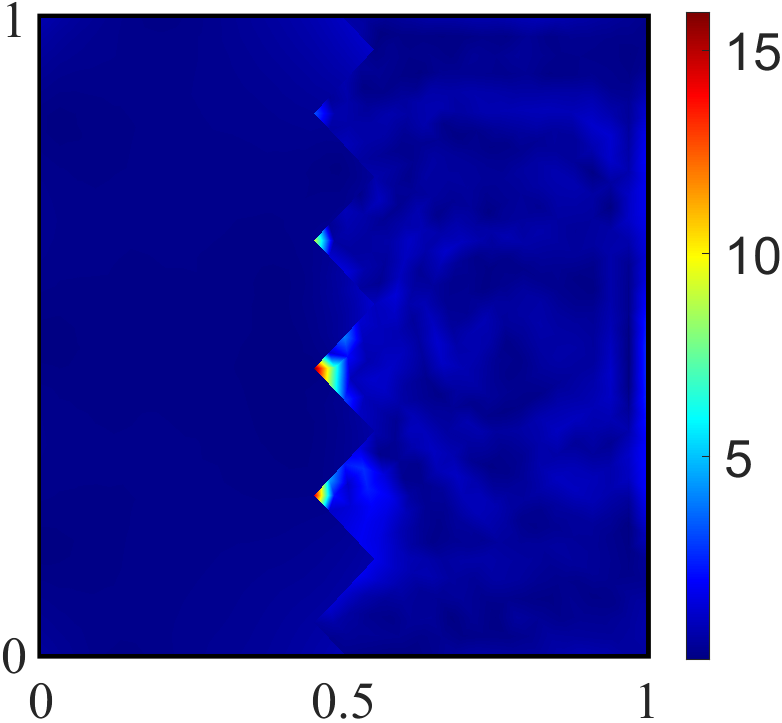}\vspace*{-0.1cm}
\centerline{\small{$(K=8)$}}
\end{minipage}
& 
\begin{minipage}{.23\textwidth}
\centering
\includegraphics[width=.83\linewidth]{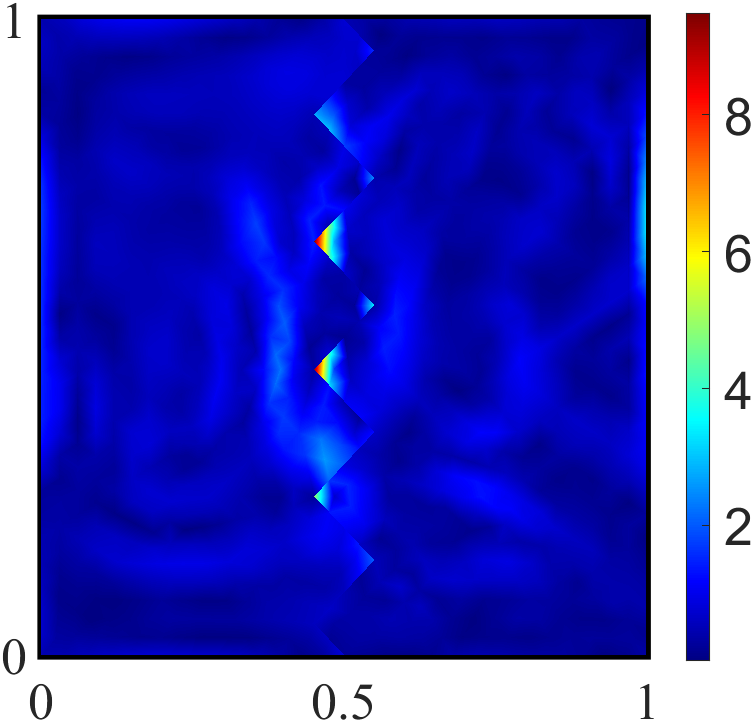}\vspace*{-0.1cm}
\centerline{\small{$(K=12)$}}
\end{minipage}
\\
\bottomrule
\end{tabular}
\label{table-Xmp2-graderror-profiles}
\vspace{-0.2cm}
\end{table}

We then consider a more complicated interface (\textit{i.e.}, the zigzag curve shown in \autoref{Xmp2-exact-solution}) with non-homogeneous jump conditions to demonstrate the meshless advantage of our proposed algorithm over the traditional mesh-based numerical methods. More specifically, the boundary value problem \eqref{IntfcProb-StrongForm} now takes on the form, for $i=1$, 2,
\begingroup
\renewcommand*{\arraystretch}{1.1}
\begin{equation}
\begin{array}{cl}
-\nabla \cdot \left( c_i \nabla u_i(x,y)  \right) + u_i(x,y) = f(x,y)\ \ & \text{in}\ \Omega_i,\\
u_i(x,y) = g(x,y)\ \ & \text{on}\ \partial \Omega_i\cap\partial \Omega, \\
u_1(x,y) - u_2(x,y) = p(x,y)\ \text{and}\ \llbracket c(x) \nabla u(x)\cdot\bm{n} \rrbracket = q(x,y) \ \ & \text{on}\ \Gamma,
\end{array}
\label{NumXmp2-Zigzag}
\end{equation}
\endgroup
where $\Omega=(0,1)\times(0,1)$ and the interface is characterized by a zigzag function
\begin{equation*}
	x = Z_3 ( Z_1(20y-\text{floor}(20y)) + Z_2 ) + 0.5
\end{equation*}
with $ Z_1 = 0.05 (-1 + 2 \text{mod}(\text{floor}(20y), 2))$, $Z_2=-0.05 \text{mod}(\text{floor}(20x),2)$ and, $Z_3=-2\text{mod}(\text{floor}(10x),2)+1$. Here, the source term $f(x,y)$, boundary value $g(x,y)$, jump functions $p(x,y)$ and $q(x,y)$ are derived from the true solution 
\begingroup
\renewcommand*{\arraystretch}{1.1}
\begin{equation*}
u(x,y) = \left\{
\begin{array}{cl}
c_1^{-1} \sin(2\pi x) ( \cos(2\pi y) - 1)\ \ &  \text{in}\ \Omega_1,\\
c_2^{-1} \sin(2\pi x) ( \cos(2\pi y) - 1)\ \ & \text{in}\ \Omega_2.
\end{array}\right.
\end{equation*}
\endgroup
Similar as before, the elliptic coefficient is chosen as $(c_1,c_2) = (1,10^3)$ and $(1,1)$ to evaluate the robustness against varying coefficients, where the exact solutions are depicted in \autoref{Xmp2-exact-solution}. Moreover, the initial guess of the unknown solution's value at interface is set to be $u_\Gamma^{[0]}(x,y)=\sin(2\pi x)(\cos(2\pi y)-1) - 1000 x (x-1) y (y-1)$. 

In a typical simulation, the error profiles and relative $L^2$ errors for different methods are reported in \autoref{table-Xmp2-error-profiles}, \autoref{table-Xmp2-graderror-profiles}, and \autoref{table-Xmp2-relative-errors}. It can be concluded that the DeepDDM approach performs well in the case of high-contrast coefficients, but fails to converge when the coefficients degenerate to $c_1=c_2=1$, which reveals the fact that DeepDDM breaks down rapidly in face of inaccurate flux predictions.

On the contrary, the iterative solutions using our algorithms can match well with the exact solution whenever the coefficient satisfies $c_1\ll c_2$ or $c_1 = c_2$ (see \autoref{table-Xmp2-error-profiles}, \autoref{table-Xmp2-relative-errors}). Note that the relaxation parameter is set to be $\rho = 1$ for the case of high-contrast coefficients $(c_1,c_2)=(1,10^3)$ and therefore the optimal convergence \cite{na2022domain,gander2015optimized} is nearly obtained in two iterations. Accordingly, we then choose $\rho=0.5$ for the case of $(c_1,c_2)=(1,1)$, where additional iterations are required due to the numerical error caused by the deep learning solvers. Notably, when solving the Dirichlet subproblem through PINNs, the second-order derivatives of $\hat{u}_1(x;\theta_1)$ are explicitly incorporated into the training loss function, leading to a satisfactory approximation of $\nabla \hat{u}_1(x;\theta_1)$ inside the subdomain $\Omega_1$. As such, DNLA (PINNs) shows better empirical performance than DNLA (deep Ritz) as reported in \autoref{table-Xmp2-relative-errors}. 

It is also noteworthy that even though the outer iteration converges by using our methods, the erroneous gradient approximation along the subdomain interfaces remains inevitable as depicted in \autoref{table-Xmp2-graderror-profiles}. Fortunately, our flux transmission does not rely on the direct computation of $\nabla \hat{u}_1(x;\theta_1)$ at interface, which plays a key role.


\subsection{Checkerboard Lattice in Two Dimension}

\begin{figure}[t!]
\centering
\includegraphics[width=0.218\textwidth]{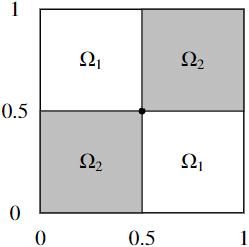}
\hspace{0.3cm}
\includegraphics[width=0.272\textwidth]{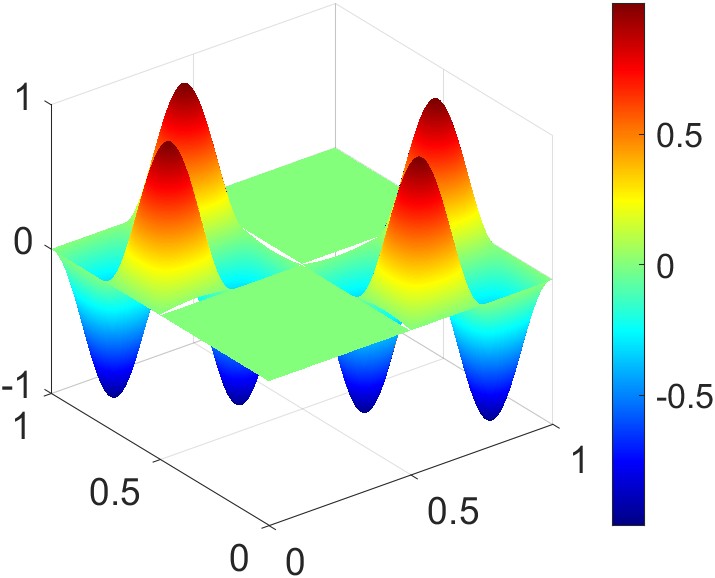}
\hspace{0.3cm}
\includegraphics[width=0.265\textwidth]{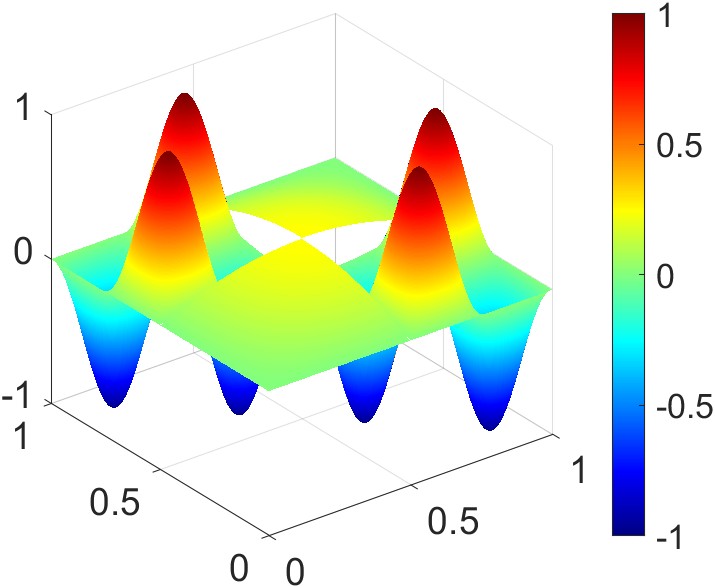}
\caption{Geometry of interface, exact solution with $(c_1,c_2)=(1,10^3)$ and $(1,1)$ for \eqref{NumXmp3-Checkerboard}.}
\label{Xmp3-exact-solution}
\end{figure}

\begin{table}[t!]
\caption{Error profiles $ \vert \hat{u}(x,y;\theta) - u(x,y)\vert $ of different methods for example \eqref{NumXmp3-Checkerboard} (see also \textbf{Appendix C}).}
\centering
\begin{tabular}{ c | c | c | c }
\toprule
Coefficients & DeepDDM & DNLA (PINNs) & DNLA (deep Ritz) \\ 
\midrule
\makecell{$(1, 10^3)$ \\ with \\ $\rho = 1.0$}  &
\begin{minipage}{.23\textwidth}
\centering
\includegraphics[width=.9\linewidth]{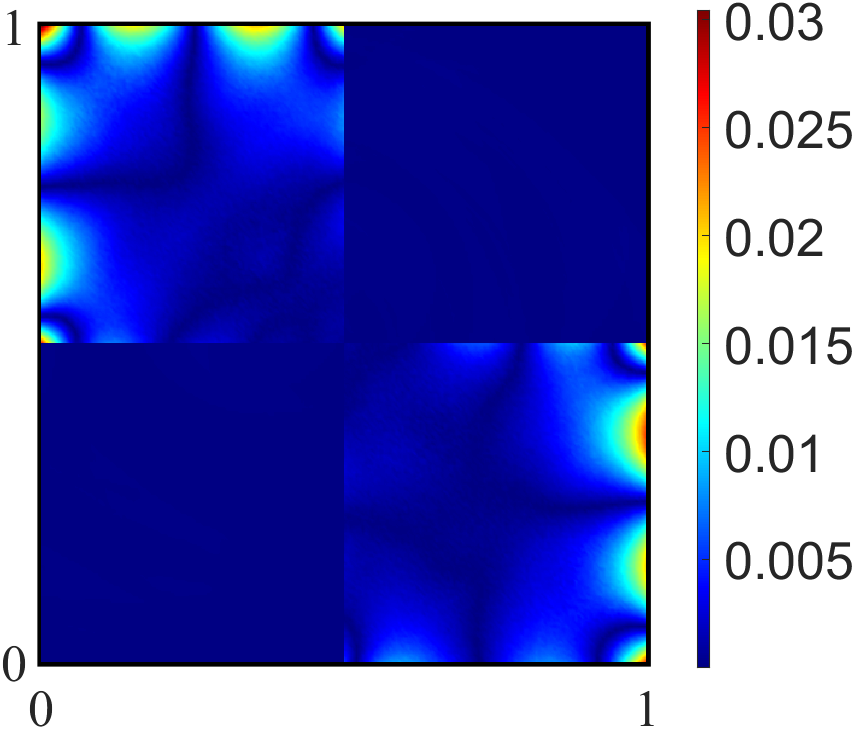}\vspace*{-0.1cm}
\centerline{\small{$(K=2)$}}
\end{minipage}
& 
\begin{minipage}{.23\textwidth}
\centering
\includegraphics[width=.9\linewidth]{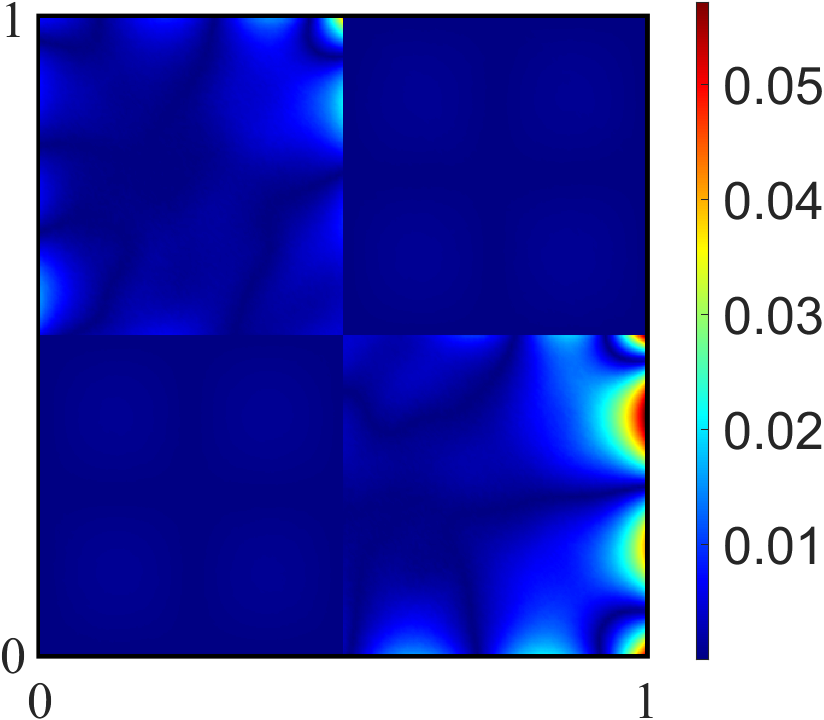}\vspace*{-0.08cm}
\centerline{\small{$(K=2)$}}
\end{minipage}
& 
\begin{minipage}{.23\textwidth}
\centering
\includegraphics[width=.9\linewidth]{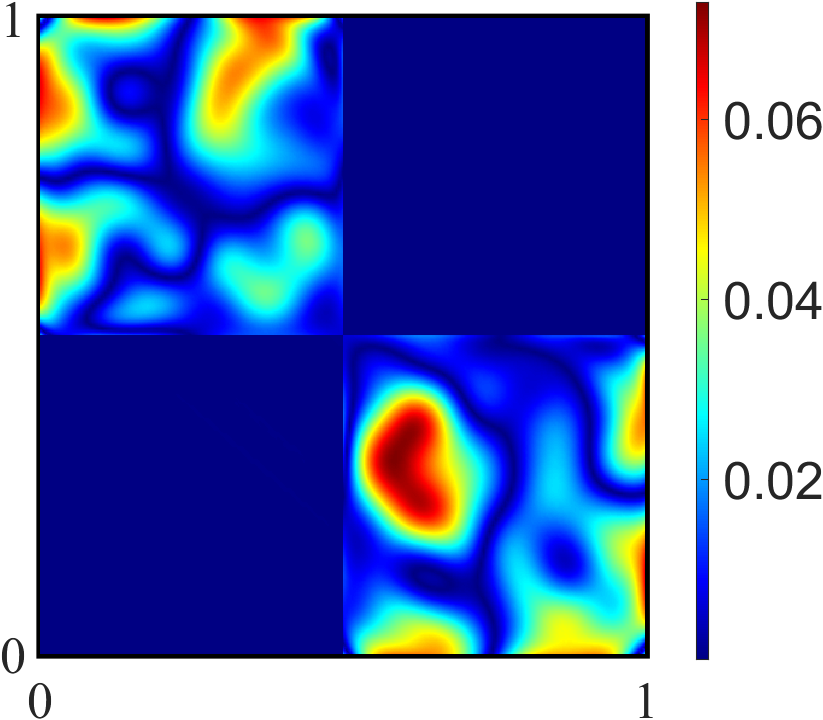}\vspace*{-0.1cm}
\centerline{\small{$(K=2)$}}
\end{minipage}
\\ 
\midrule
\makecell{$(1, 1)$ \\ with \\ $\rho = 0.5$} &
\begin{minipage}{.23\textwidth}
\centering
\includegraphics[width=0.9\linewidth]{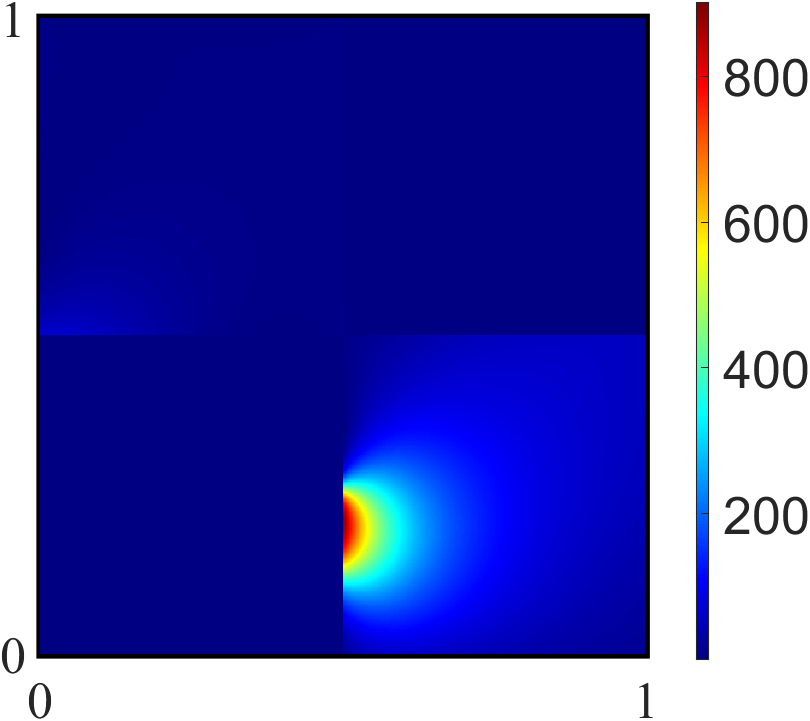}\vspace*{-0.1cm}
\centerline{\small{$(K=15)$}}
\end{minipage}
& 
\begin{minipage}{.23\textwidth}
\centering
\includegraphics[width=.9\linewidth]{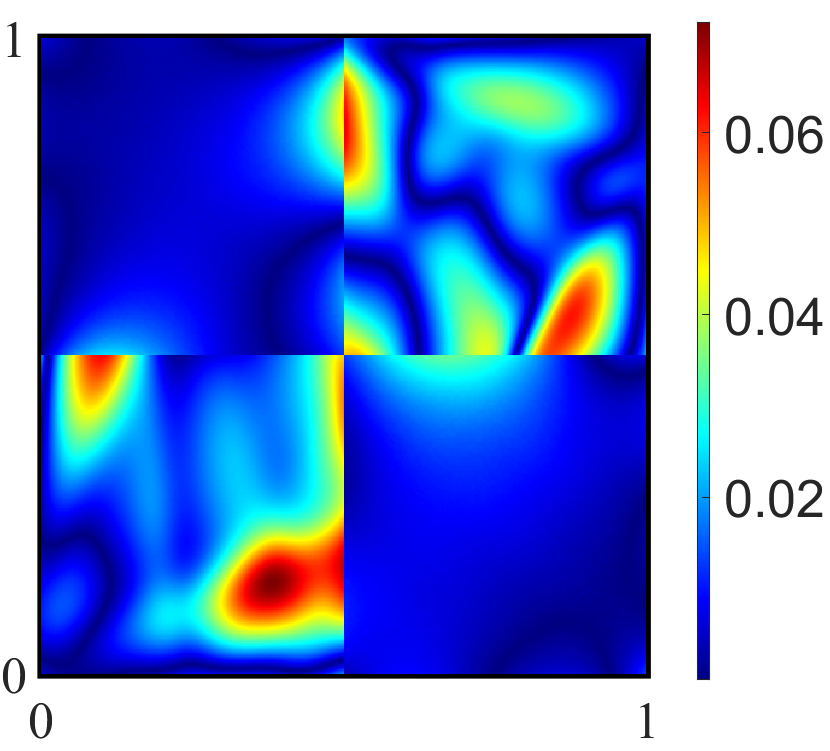}\vspace*{-0.1cm}
\centerline{\small{$(K=9)$}}
\end{minipage}
& 
\begin{minipage}{.23\textwidth}
\centering
\includegraphics[width=0.9\linewidth]{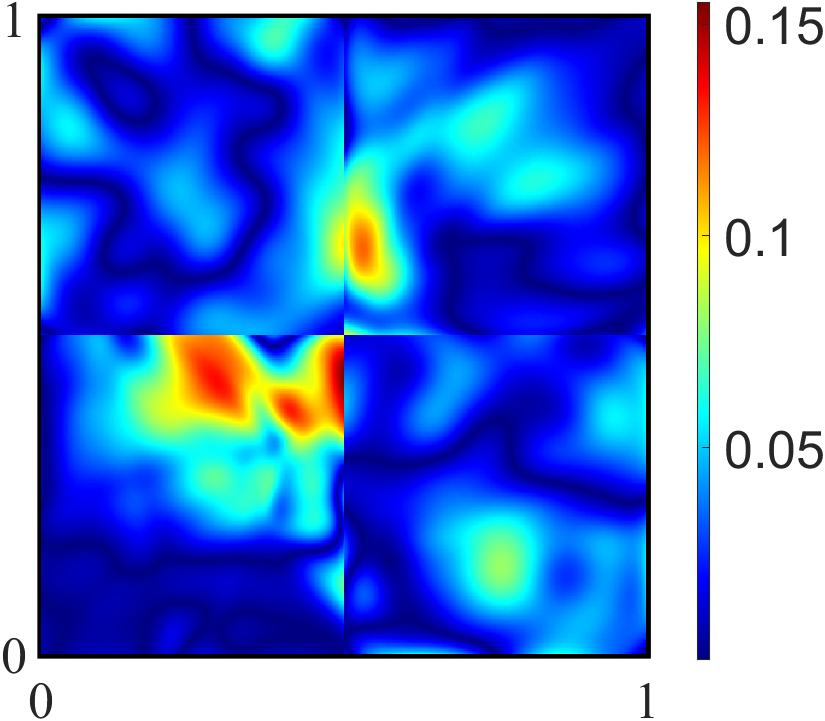}\vspace*{-0.1cm}
\centerline{\small{$(K=10)$}}
\end{minipage}
\\
\bottomrule
\end{tabular}
\label{table-Xmp3-error-profiles}

\vspace{0.35cm}

\caption{Comparison of different methods in terms of relative $L^2$ errors for example \eqref{NumXmp3-Checkerboard}.}
\centering
\renewcommand{\arraystretch}{1.1}
\begin{tabular}{ | p{1cm} || c | c | c | c |  }
\hline
\multicolumn{2}{|c|}{ \diagbox[width=12.8em]{Coefficients}{Outer Iterations} } & 1 & 2 & 10  \\
\hline
\hline
\multirow{3}{*}{\centerline{$\!\!\!(1,10^3)\!\!\!$}} & DeepDDM & 0.448 $\pm$ 0.002 & 0.015 $\pm$ 0.006   & - \\ 
\cline{2-5}
& DNLA (PINNs) & 0.447 $\pm$ 0.002 & 0.020 $\pm$ 0.009 & -   \\ 
\cline{2-5}
& \!\!\!\! DNLA (deep Ritz)\!\!\! & 0.457 $\pm$ 0.009 & 0.053 $\pm$ 0.001  & - \\ 
\hline	
\hline
\multirow{3}{*}{\centerline{$\!\!\!(1,1)\!\!\!$}} & DeepDDM & 1.820 $\pm$ 0.125 & 8.981 $\pm$ 0.486 & 459.855 $\pm$ 352.586 \\ 
\cline{2-5}
& DNLA (PINNs) & 1.052 $\pm$ 0.244 & 0.644 $\pm$ 0.069 & 0.134 $\pm$ 0.040 \\ 
\cline{2-5}
& \!\!\!\! DNLA (deep Ritz)\!\!\! & 0.944 $\pm$ 0.302 & 0.720 $\pm$ 0.428 & 0.365 $\pm$ 0.166 \\ 
\hline		                                                     
\end{tabular}
\label{table-Xmp3-relative-errors}
\vspace{-0.2cm}
\end{table}

Next, we consider the interface problem with domain partitioned in a checkerboard fashion (see \autoref{Xmp3-exact-solution}),
\begingroup
\renewcommand*{\arraystretch}{1.1}
\begin{equation}
\begin{array}{cl}
-\nabla \cdot \left( c_i \nabla u_i(x,y)  \right) + u_i(x,y) = f(x,y)\ \ & \text{in}\ \Omega_i,\\
u_i(x,y) = g(x,y)\ \ & \text{on}\ \partial \Omega_i\cap\partial \Omega, \\
u_1(x,y) - u_2(x,y) = p(x,y)\ \text{and}\ \llbracket c(x) \nabla u(x)\cdot\bm{n} \rrbracket = q(x,y) \ \ & \text{on}\ \Gamma,
\end{array}
\label{NumXmp3-Checkerboard}
\end{equation}
\endgroup
for $i=1$, 2, where the interior cross-point is well-known to require extra treatments when using mesh-based methods \cite{toselli2004domain,quarteroni1999domain}. Similar as before, the source term, boundary and interface conditions of our underlying problem are derived from the exact solution with a large jump across interface (see \autoref{Xmp3-exact-solution})
\begingroup
\renewcommand*{\arraystretch}{1.1}
\begin{equation*}
u(x,y) = \left\{
\begin{array}{cl}
c_1^{-1} \sin(4\pi y)\sin(4\pi x) \ \ &  \text{in}\ \Omega_1,\\
c_2^{-1} 4x(x-1)y(y-1) \ \ & \text{in}\ \Omega_2.
\end{array}\right.
\end{equation*}
\endgroup

As can be seen from \autoref{table-Xmp3-error-profiles} and \autoref{table-Xmp3-relative-errors}, the DeepDDM, DNLA (PINNs), and DNLA (deep Ritz) methods are able to provide satisfactory performance for interface problem \eqref{NumXmp3-Checkerboard} with high-contrast coefficients $(c_1,c_2)=(1,10^3)$. However, the DeepDDM scheme diverges as the ratio $c_1/c_2$ fails to eliminate the gradient approximation error \eqref{Err-Intfc-DNLM-DeepDDM}, while our methods still work reasonably well. Here, the initial guess takes on the form $u_\Gamma^{[0]}(x,y) = \sin(4\pi y)\sin(4\pi x) + 100x(x-1)^3y(y-1)^3$.

\begin{table}[t!]
\caption{Error profiles in terms of the pointwise $\ell^2$-norm of $\nabla \hat{u}(x,y;\theta) - \nabla u(x,y) $ for example \eqref{NumXmp3-Checkerboard}.}
\centering
\begin{tabular}{ c | c | c | c }
\toprule
Coefficients & DeepDDM & DNLA (PINNs) & DNLA (Deep Ritz) \\ 
\midrule
\makecell{$(1, 1)$ \\ with \\ $\rho = 0.5$} &
\begin{minipage}{.23\textwidth}
\centering
\includegraphics[width=.9\linewidth]{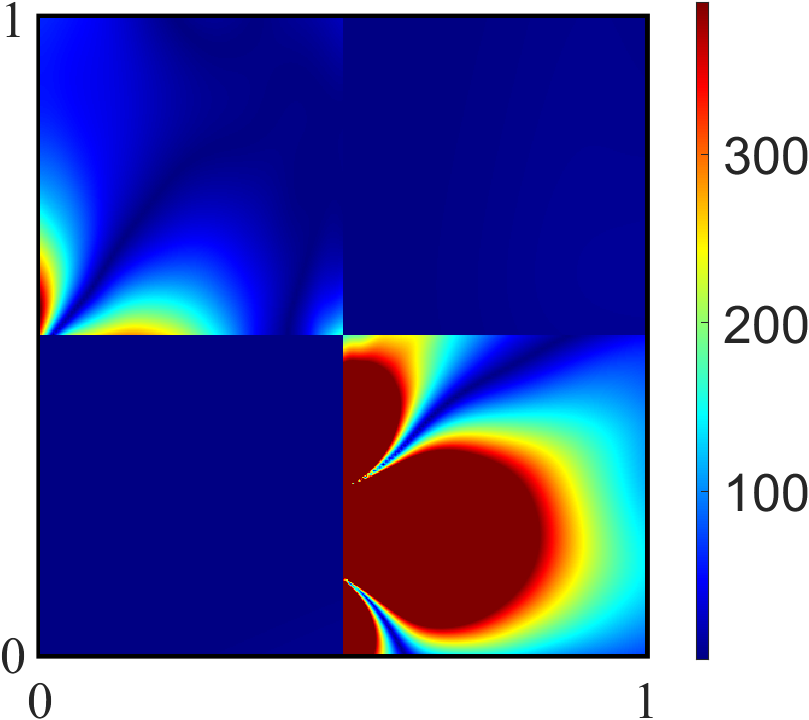}\vspace*{-0.1cm}
\centerline{\small{$(K=15)$}}
\end{minipage}
& 
\begin{minipage}{.23\textwidth}
\centering
\includegraphics[width=0.9\linewidth]{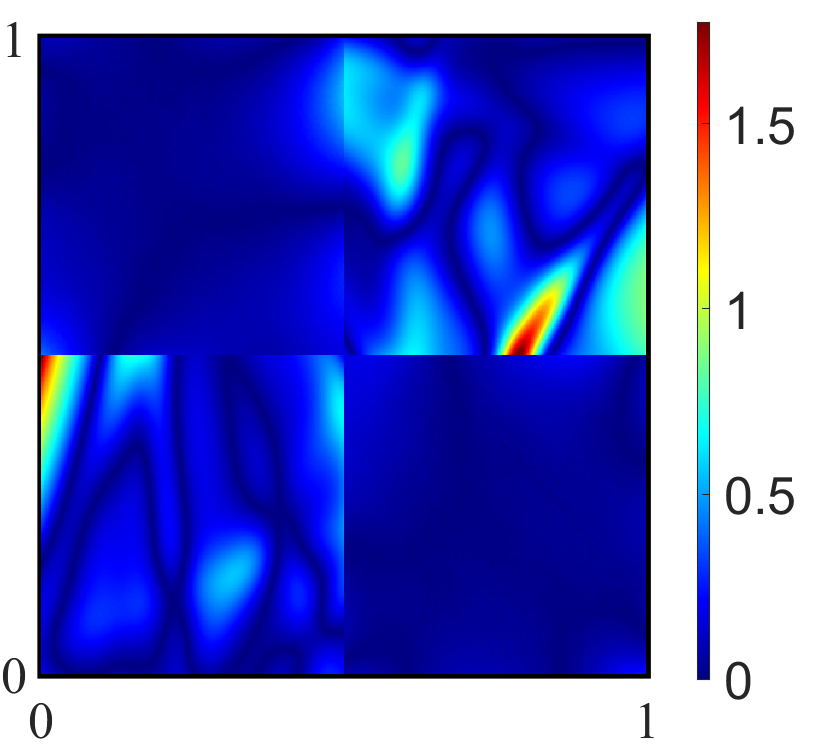}\vspace*{-0.1cm}
\centerline{\small{$(K=6)$}}
\end{minipage}
& 
\begin{minipage}{.23\textwidth}
\centering
\includegraphics[width=0.9\linewidth]{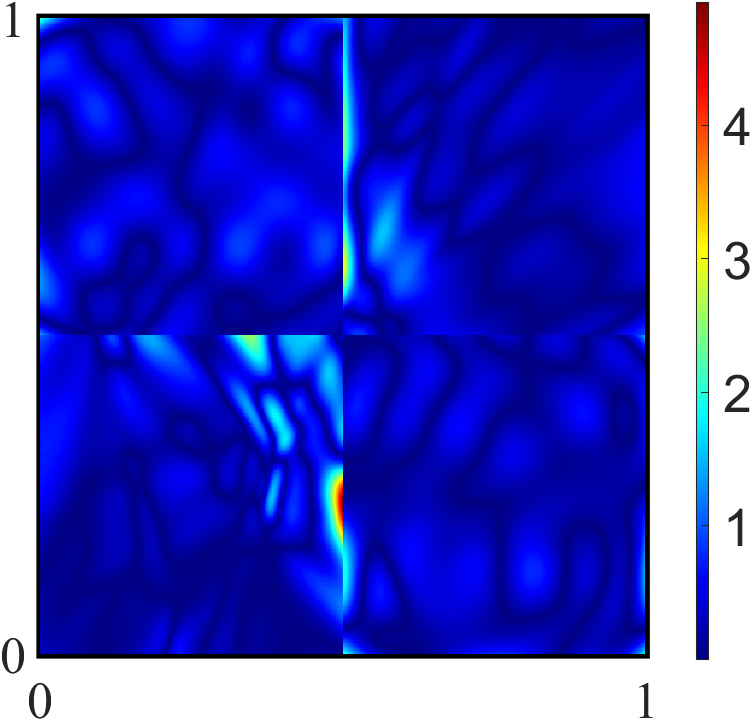}\vspace*{-0.1cm}
\centerline{\small{$(K=10)$}}
\end{minipage}
\\
\bottomrule
\end{tabular}
\label{table-Xmp3-graderror-profiles}
\vspace{-0.2cm}
\end{table}

Thanks to the variational treatment of the flux jump condition \eqref{IntfcProb-DN-NeumannSubProb-DeepRitzForm}, the erroneous flux prediction in Dirichlet subproblem (see \autoref{table-Xmp3-graderror-profiles}) is a key threat to the DeepDDM scheme \cite{li2020deep} but not our proposed algorithms, which demonstrates that our methods are robust with respect to the varying coefficients. Furthermore, DNLA (PINNs) achieves a smaller approximation error $\nabla \hat{u}_1^{[k]} - \nabla u_1^{[k]}$ inside the subdomain $\Omega_1$ (see \autoref{table-Xmp3-graderror-profiles}), and therefore performs better than that of DNLA (deep Ritz) (see \autoref{table-Xmp3-relative-errors}).



\section{Conclusion} \label{Section-Conlcusion}

Motivated by the observation that domain decomposition methods are essentially continuous schemes, a novel mesh-free method is developed in this work to realize the classical Dirichlet-Neumann algorithm using neural networks, which paves the way for effectively solving the elliptic interface problem with high-contrast coefficients and irregular interfaces. Instead of a direct substitution of the local problem solver with modern deep learning tools, a rigorous error analysis is first established to obtain the error bound of boundary penalty treatment for both the Dirichlet and Neumann subproblems, and therefore sheds light on the design of training loss functions that can alleviate the burden of erroneous flux prediction along subdomain interfaces. In other words, the accuracy of flux transmission (or derivatives of the network solution at interfaces plays a key role in combining non-overlapping domain decomposition methods with techniques from the deep learning community, which has not yet been well addressed by the existing deep learning-based algorithms and is the focus of this work. Moreover, the tedious tuning of penalty coefficients can also get insights from our theoretical study. A wide variety of numerical examples are carried out to validate the effectiveness and robustness of our proposed methods, achieving promising performance especially in the presence of erroneous flux data during outer iterations. 

We believe that our theoretical and experimental studies can also be generalized to the Robin-Robin algorithm \cite{quarteroni1999domain,na2022domain}, while substantial improvements can be made by employing coarse grid correction \cite{mercier2021coarse}, adaptive sampling strategy \cite{he2022mesh}, special network architectures \cite{li2022deep}, and more comprehensive error analysis \cite{jiao2021error,duan2021analysis}.


\newpage

\appendix

\section{Detailed Proof of Step 1) in \textnormal{\textbf{Theorem \ref{IntfcProb-DN-DirichletSubProb-ErrAnlys}}}} 

Recall that the function $\hat{u}_1\in H^1(\Omega_1)$ is decomposed as a sum of two local functions, \textit{i.e.}, $\hat{u}_1 = \hat{u}_1^{[k]} + g$, then 
\begingroup
\renewcommand*{\arraystretch}{2.5}
\begin{equation*}
\begin{array}{cl}
\mathcal{L}_2(\hat{u}_2) \!\!\!\!\! & = \displaystyle \frac12 b_1(\hat{u}_1^{[k]} + g, \hat{u}_1^{[k]} + g) - (f, \hat{u}_1^{[k]} + g)_1 + \frac{\beta_D}{2} \Big( \lVert \hat{u}_1^{[k]} + g \rVert_{L^2(\partial\Omega_1\cap\partial\Omega)}^2 \\
& \displaystyle \ \ \ +\ \lVert \hat{u}_1^{[k]} + g - u_\Gamma^{[k]} \rVert_{L^2(\Gamma)}^2 \Big) \\
& \displaystyle = \mathcal{L}_1(\hat{u}_1^{[k]}) + b_1( \hat{u}_1^{[k]}, g ) - (f,g)_1 + \frac12 b_1(g, g) + \frac{\beta_D}{2} \lVert g \rVert_{L^2(\partial\Omega_1)}^2 \\
& \displaystyle \ \ \ + \ \beta_D \left( (\hat{u}_1^{[k]}, g)_{L^2(\partial\Omega_1\cap\partial\Omega)} + (\hat{u}_1^{[k]} - u_\Gamma^{[k]}, g)_{L^2(\Gamma)} \right)
\end{array}
\end{equation*}
\endgroup
Note that the function $\hat{u}_1^{[k]}\in H^1(\Omega_1)$ is required to satisfy the equation \eqref{DirichletSubProb-BndryPenalty-StrongForm} in the sense of distributions, that is, for any $g\in H^2(\Omega_1)$,
\begin{equation*}
	b_1(\hat{u}_1^{[k]}, g) + \beta_D \left( (\hat{u}_1^{[k]}, g)_{L^2(\partial\Omega_1\cap\partial\Omega)} + (\hat{u}_1^{[k]} - u_\Gamma^{[k]}, g)_{L^2(\Gamma)} \right)  = (f, g)_1
\end{equation*}
and therefore we arrive at for any $\hat{u}_1\in H^1(\Omega_1)$
\begin{equation*}
\mathcal{L}_1(\hat{u}_1) = \mathcal{L}_1(\hat{u}_1^{[k]}) + \int_{\Omega_1} \left( \frac{c_1}{2}|\nabla g|^2 + \frac12 |g|^2 \right) dx + \frac{\beta_D}{2} \int_{\partial\Omega_1} |g|^2 ds \geq \mathcal{L}_1(\hat{u}_1^{[k]}).
\end{equation*}

\section{Detailed Proof of Step 1) in \textnormal{\textbf{Theorem \ref{IntfcProb-DN-NeumannSubProb-ErrAnlys}}}}

Recall that the function $\hat{u}_2\in H^1(\Omega)$ is decomposed as a sum of two global functions, \textit{i.e.}, $\hat{u}_2 = \hat{u}_2^{[k]} + g$, then 
\begingroup
\renewcommand*{\arraystretch}{2.5}
\begin{equation*}
\begin{array}{cl}
\mathcal{L}_2(\hat{u}_2) \!\!\!\!\! & = \displaystyle \frac12 b_2( \hat{u}_2^{[k]} + g, \hat{u}_2^{[k]} + g)  - (f, \hat{u}_2^{[k]} + g)_2 + b_1( \hat{u}_1^{[k]}, \hat{u}_2^{[k]} + g)  - (f, \hat{u}_2^{[k]} + g)_1 \\
&\displaystyle \ \ \ +\ (q, \hat{u}_2^{[k]} + g)_{L^2(\Gamma)} + \frac{\beta_N}{2} \lVert \hat{u}_2^{[k]} + g \rVert_{L^2(\partial\Omega)}^2 \\
& \displaystyle = \mathcal{L}_2(\hat{u}_2^{[k]}) + b_2( \hat{u}_2^{[k]}, g)  - (f, g)_2 + b_1( \hat{u}_1^{[k]}, g)  - (f, g)_1 + (q, g)_{L^2(\Gamma)} \\
& \displaystyle \ \ \ +\ \beta_N ( \hat{u}_2^{[k]}, g )_{L^2(\partial\Omega)} + \frac12 b_2(g, g) + \frac{\beta_N}{2} \lVert g \rVert_{L^2(\partial\Omega)}^2
\end{array}
\end{equation*}
\endgroup
where $g\in H^1(\Omega)$ is arbitrary and is defined over the entire domain. Note that $\hat{u}_1^{[k]}\in H^1(\Omega_1)$ and $\hat{u}_2^{[k]} |_{\Omega_2} \in H^1(\Omega_2)$ are required to satisfy the equations \eqref{DirichletSubProb-BndryPenalty-StrongForm} and \eqref{NeumannSubProb-BndryPenalty-StrongForm} in the sense of distributions, namely,
\begin{equation*}
	b_1( \hat{u}_1^{[k]}, g_1 ) - ( c_1 \nabla \hat{u}_1^{[k]} \cdot \bm{n}_1, g_1 )_{L^2(\partial\Omega_1)} = (f, g_1)_1\ \ \ \textnormal{for any}\ g_1\in H^1(\Omega_1)
\end{equation*}
and 
\begin{equation*}
	b_2( \hat{u}_2^{[k]}, g_2 ) - ( c_2 \nabla \hat{u}_2^{[k]} \cdot \bm{n}_2, g_2 )_{L^2(\partial\Omega_2)} = (f, g_2)_2\ \ \ \textnormal{for any}\ g_2\in H^1(\Omega_2)
\end{equation*}
respectively, we then have by the boundary conditions imposed in \eqref{NeumannSubProb-BndryPenalty-StrongForm-Extension} and \eqref{NeumannSubProb-BndryPenalty-StrongForm} that for any $\hat{u}_2\in H^1(\Omega)$
\begingroup
\renewcommand*{\arraystretch}{2.5}
\begin{equation*}
\begin{array}{cl}
\mathcal{L}_2(\hat{u}_2) \!\!\!\!\! & = \displaystyle \mathcal{L}_2(\hat{u}_2^{[k]}) + ( c_1 \nabla \hat{u}_1^{[k]} \cdot \bm{n}_1+ \beta_N\hat{u}_2^{[k]}, g)_{L^2(\partial\Omega_1\cap\partial\Omega)} \\
& \displaystyle\ \ \  +\ ( c_2 \nabla \hat{u}_2^{[k]} \cdot \bm{n}_2 + \beta_N\hat{u}_2^{[k]}, g )_{L^2(\partial\Omega_2\cap\partial\Omega)} \\
& \displaystyle \ \ \ +\ ( c_1 \nabla \hat{u}_1^{[k]} \cdot \bm{n}_1 + c_2 \nabla \hat{u}_2^{[k]} \cdot \bm{n}_2 + q, g)_{L^2(\Gamma)} + \frac12 b_2(g, g) + \frac{\beta_N}{2} \lVert g \rVert_{L^2(\partial\Omega)}^2 \\
& \displaystyle = \mathcal{L}_2(\hat{u}_2^{[k]}) + \int_{\Omega_2} \left( \frac{c_2}{2}|\nabla g|^2 + \frac12 |g|^2 \right) dx + \frac{\beta_N}{2} \int_{\partial\Omega} |g|^2 ds \geq \mathcal{L}_2(\hat{u}_2^{[k]}).
\end{array}
\end{equation*}
\endgroup


\section{Detailed Experimental Results}

The hyperparameter configuration used for our numerical experiments is briefly summarized in \autoref{hyperparamter-configuration}, together with the iterative network solutions using different methods for each example.

\begin{table}[htp]
\caption{Hyperparameter configuration used for our experiments.}
\centering
\renewcommand{\arraystretch}{1.1}
\begin{tabular}{ | c || c | c | c | c | }
\hline
\multicolumn{2}{|c|}{ } & \!\! \makecell{Train Datasets \\ ($N_{\Omega_i}$, $N_{D_i}$, $N_\Gamma$)} \!\! & \!\! \makecell{Penalty Coeffs \\ ($\beta_D$, $\beta_N$)} \!\! & \!\! \makecell{Network \\ (depth, width)} \!\! \\
\hline	
\hline
\multirow{3}{*}{\!\!$c_2=10^3$\!\!} & DeepDDM & $(20k,5k,5k)$ & 400 & $(6,50)$ \\ 
\cline{2-5}
& DNLA (PINNs)  & $(20k,5k,5k)$ & $(800,800k)$ & $(6,50)$  \\ 
\cline{2-5}
& \!\!\! DNLA (deep Ritz) \!\!\! & $(20k,5k,5k)$ & $(800,800k)$ & $(6,50)$  \\ 
\hline         
\hline
\multirow{3}{*}{\!\!$c_2=1$\!\!} & DeepDDM & $(20k,5k,5k)$ & 400 & $(6,50)$ \\ 
\cline{2-5}
& DNLA (PINNs) & $(20k,5k,5k)$ & $(800,800)$ & $(6,50)$  \\ 
\cline{2-5}
& \!\!\! DNLA (deep Ritz) \!\!\! & $(20k,5k,5k)$ & $(800,800)$ & $(6,50)$   \\ 
\hline                                             
\end{tabular}
\label{hyperparamter-configuration}
\end{table}

\begin{table}[!htb]
\caption{The iterative solutions $\hat{u}^{[k]}(x,y;\theta)$ for numerical example \eqref{NumXmp1-Circle} with $(c_1,c_2)=(1,10^3)$.}
\centering
\adjustbox{max width=0.85\textwidth}{
\centering
\begin{tabular}{ c  c  c  c }
\makecell{DeepDDM} &
\begin{minipage}{.23\textwidth}
\centering
\includegraphics[width=1\linewidth]{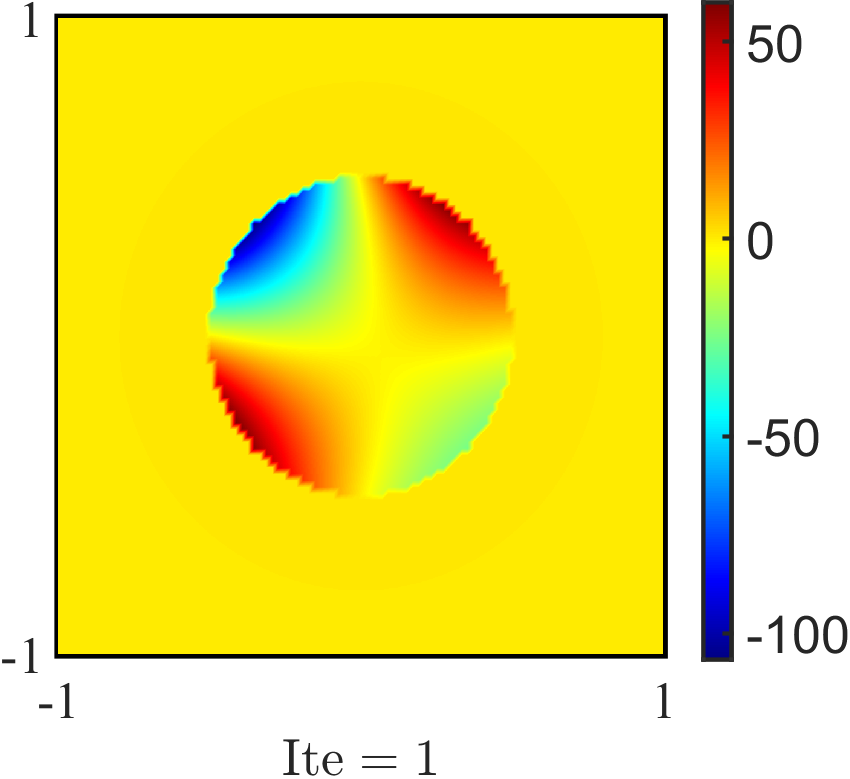}\vspace*{-0.08cm}
\end{minipage}
&
\begin{minipage}{.23\textwidth}
\centering
\includegraphics[width=1\linewidth]{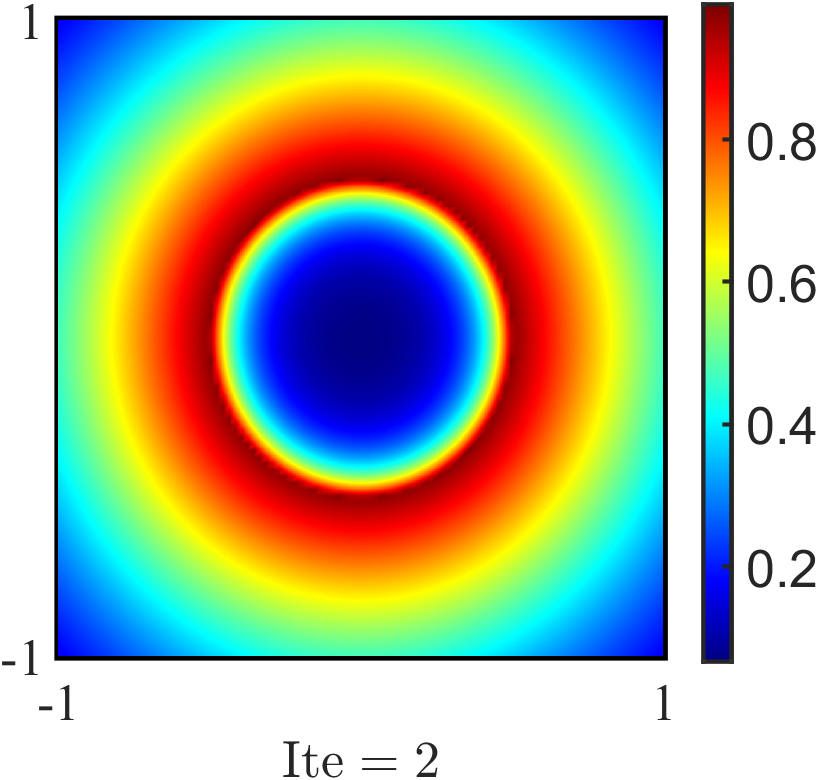}\vspace*{-0.08cm}
\end{minipage}
& 
\begin{minipage}{.23\textwidth}
\centering
\includegraphics[width=1\linewidth]{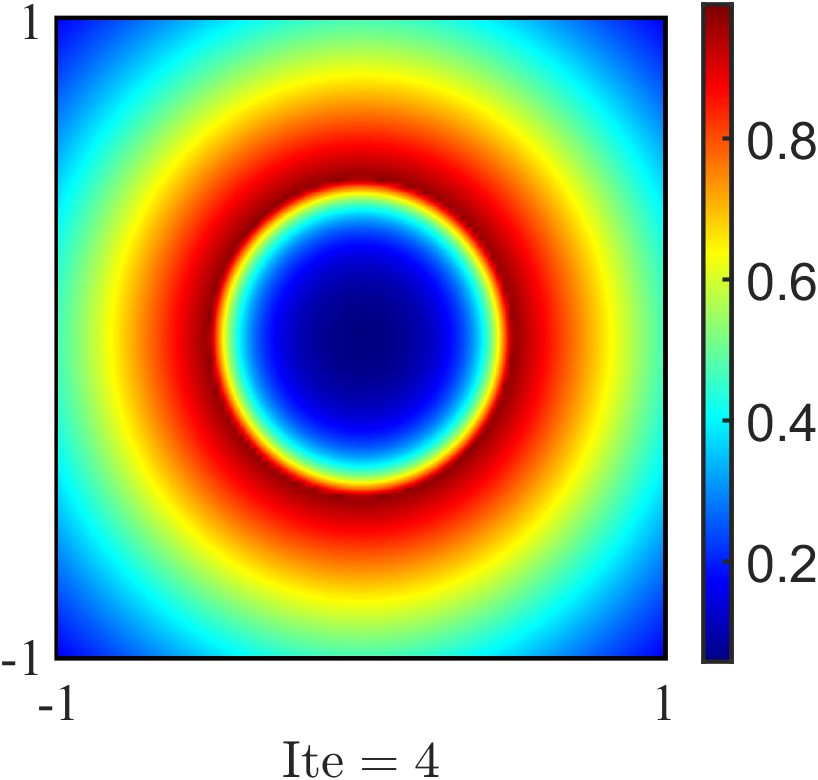}\vspace*{-0.08cm}
\end{minipage}
\\ 
\\
\makecell{DNLA \\ (PINNs)} &
\begin{minipage}{.23\textwidth}
\centering
\includegraphics[width=1\linewidth]{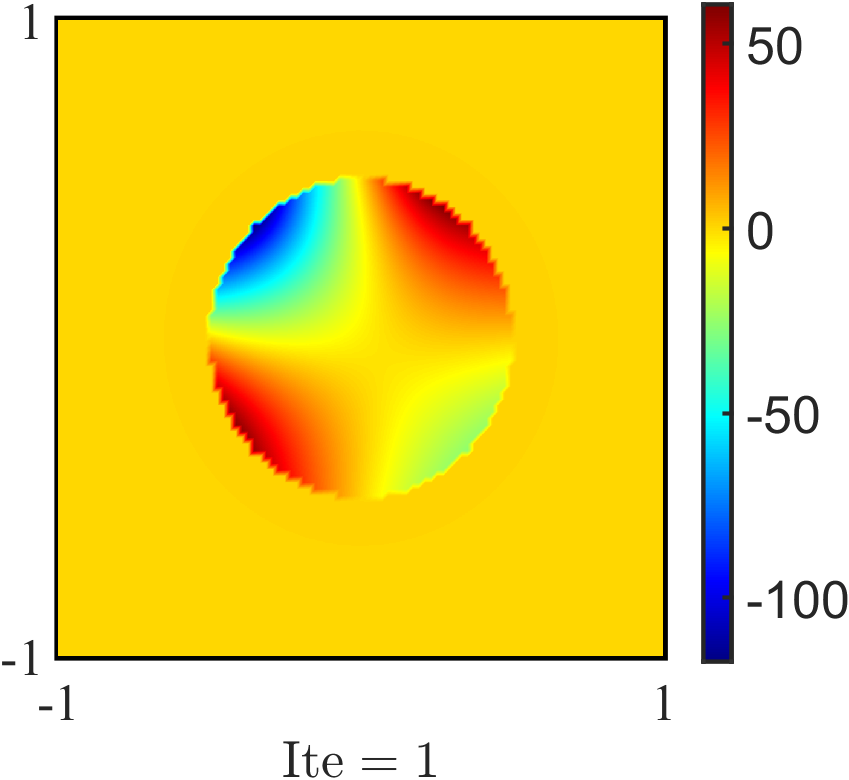}\vspace*{-0.08cm}
\end{minipage}
&
\begin{minipage}{.23\textwidth}
\centering
\includegraphics[width=1\linewidth]{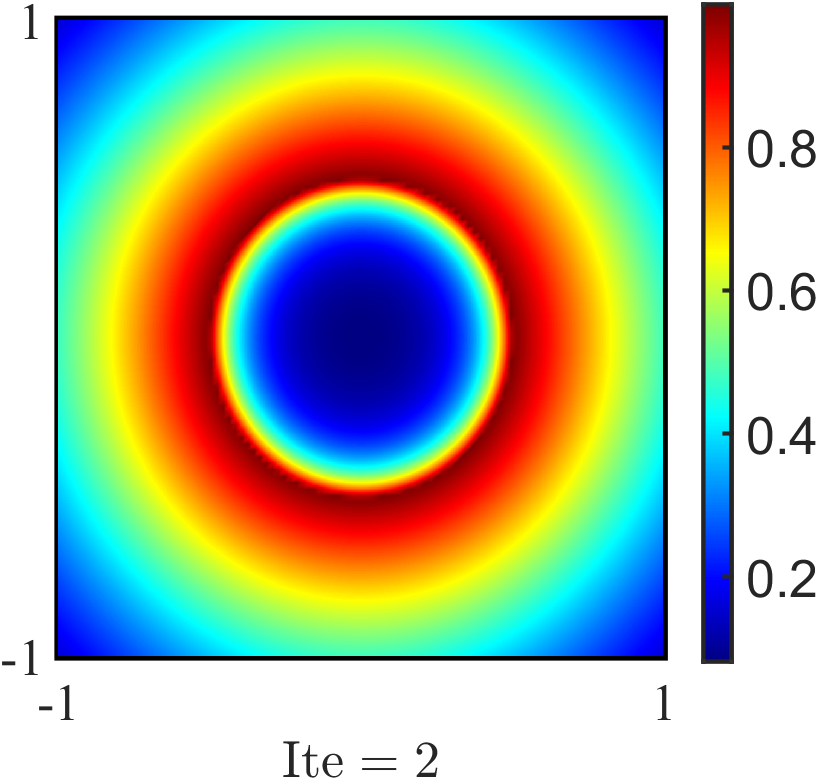}\vspace*{-0.08cm}
\end{minipage}
& 
\begin{minipage}{.23\textwidth}
\centering
\includegraphics[width=1\linewidth]{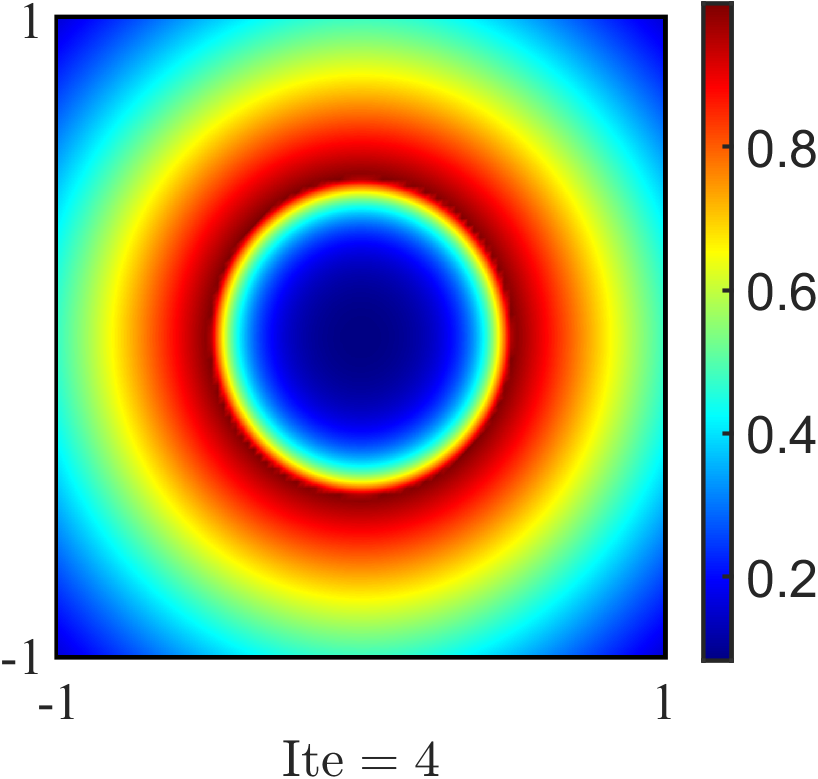}\vspace*{-0.08cm}
\end{minipage}
\\
\\
\makecell{DNLA \\ (deep Ritz)} &
\begin{minipage}{.23\textwidth}
\centering
\includegraphics[width=1.\linewidth]{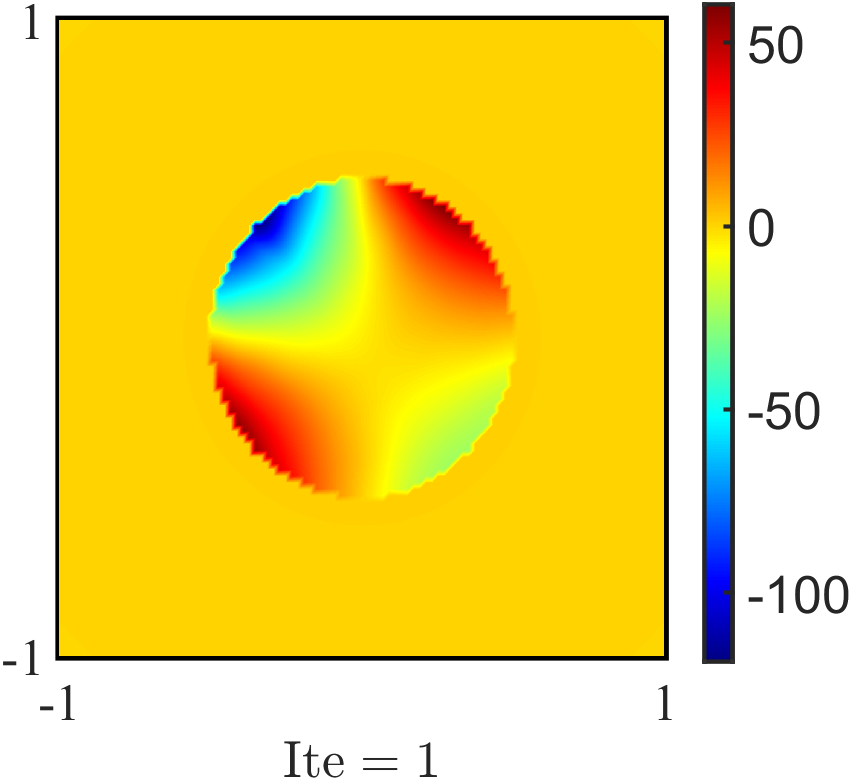}\vspace*{-0.08cm}
\end{minipage}
&
\begin{minipage}{.23\textwidth}
\centering
\includegraphics[width=1\linewidth]{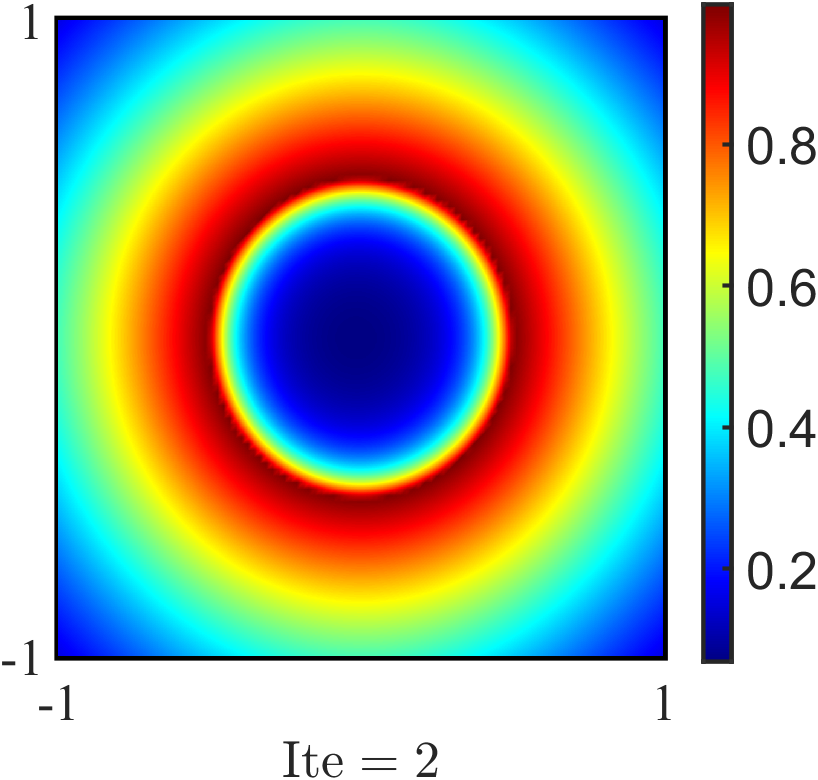}\vspace*{-0.08cm}
\end{minipage}
& 
\begin{minipage}{.23\textwidth}
\centering
\includegraphics[width=1\linewidth]{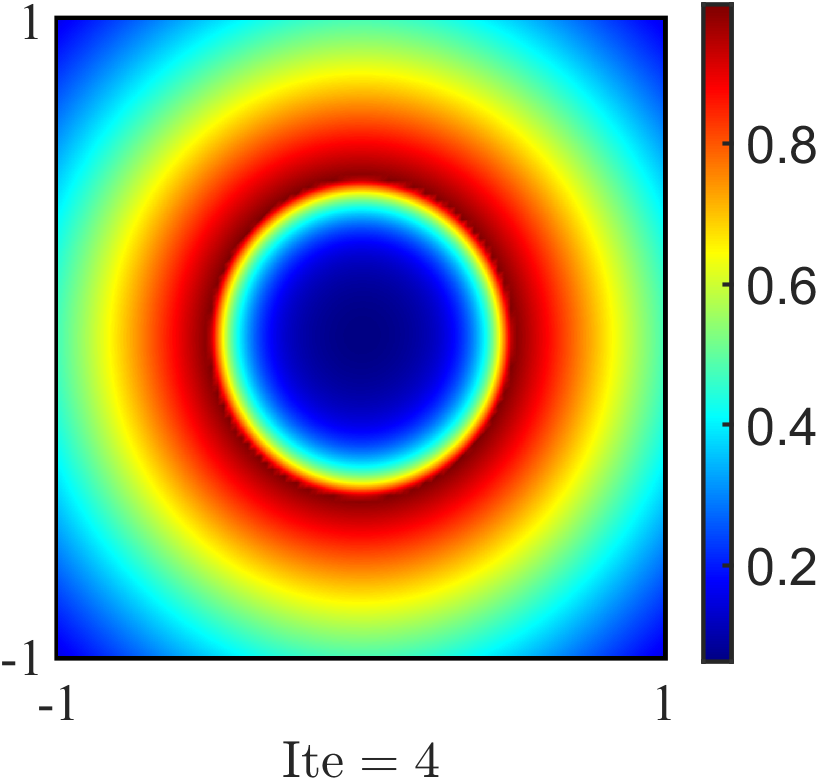}\vspace*{-0.08cm}
\end{minipage}
\end{tabular}}
\end{table}

\hfill

\begin{table}[!htb]
\caption{The iterative solutions $\hat{u}^{[k]}(x,y;\theta)$ for numerical example \eqref{NumXmp1-Circle} with $(c_1,c_2)=(1,1)$.}
\centering
\adjustbox{max width=\textwidth}{
\centering
\begin{tabular}{ c c  c  c  c }
\makecell{DeepDDM} &
\begin{minipage}{.23\textwidth}
\centering
\includegraphics[width=1\linewidth]{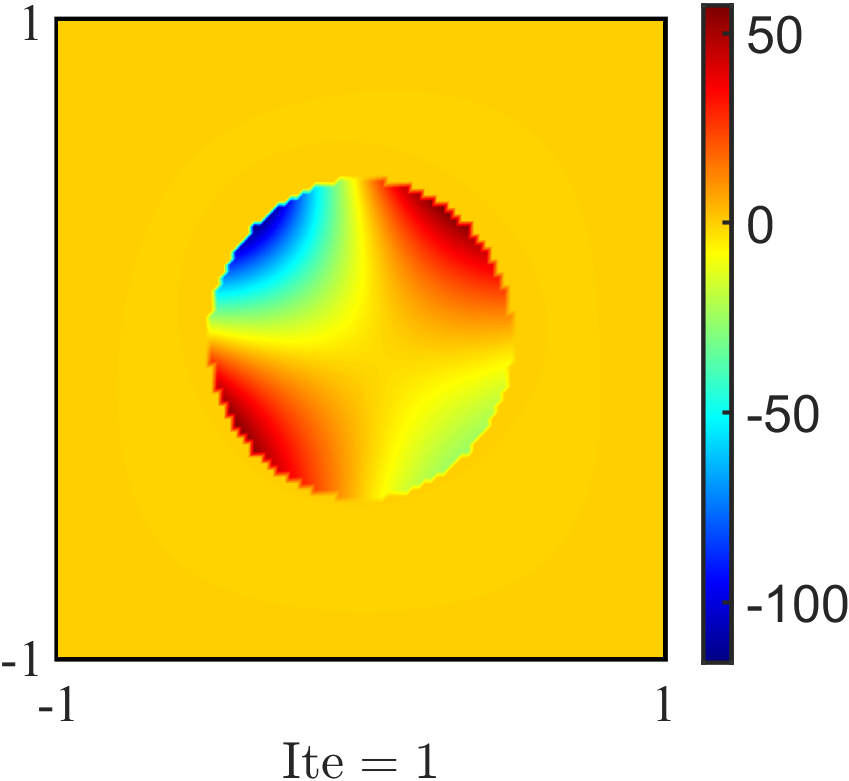}\vspace*{-0.08cm}
\end{minipage}
&
\begin{minipage}{.23\textwidth}
\centering
\includegraphics[width=1\linewidth]{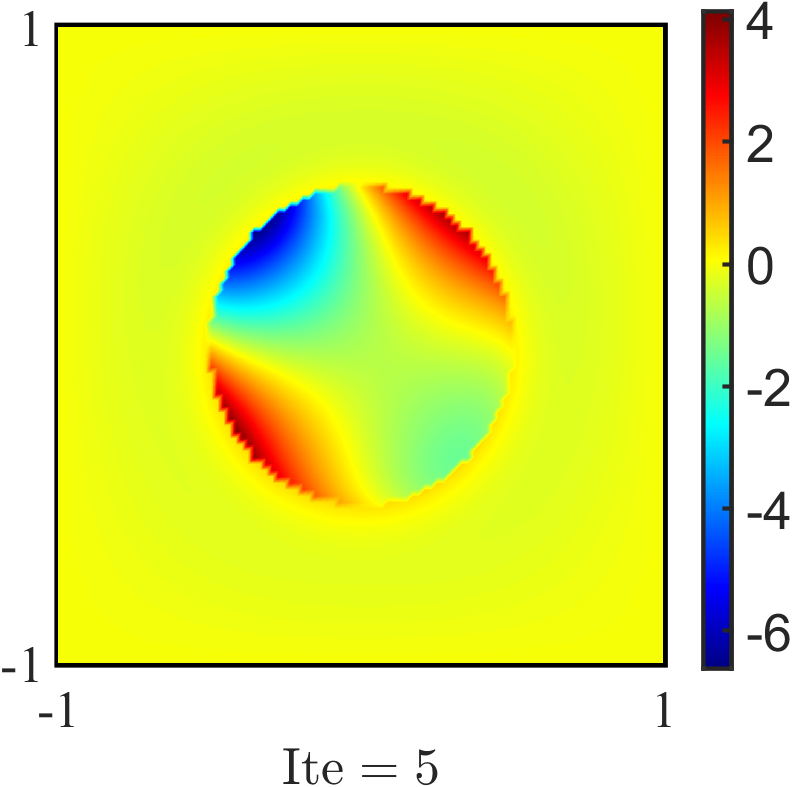}\vspace*{-0.08cm}
\end{minipage}
& 
\begin{minipage}{.23\textwidth}
\centering
\includegraphics[width=1\linewidth]{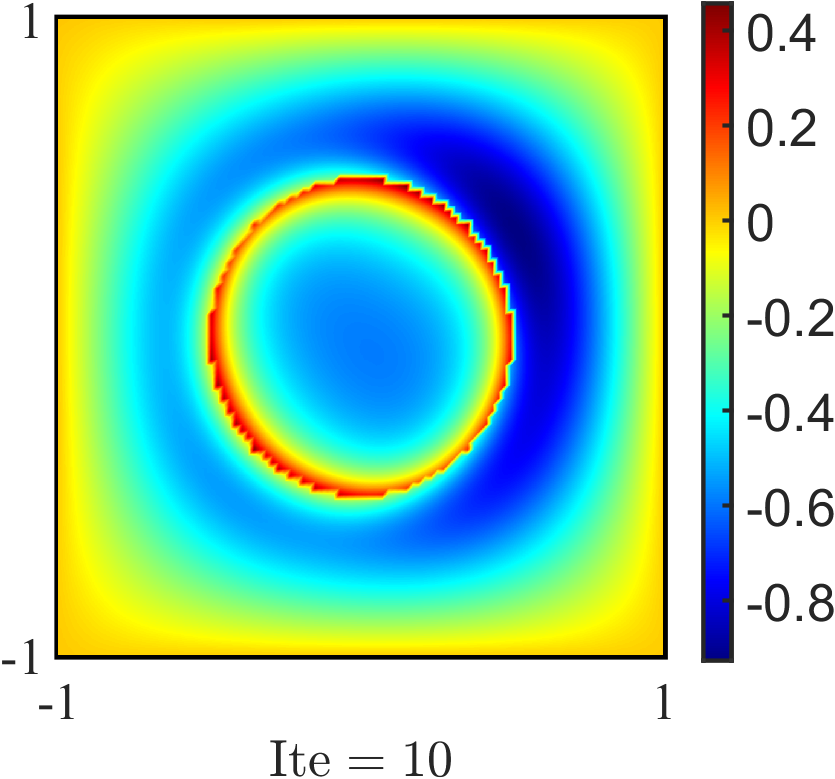}\vspace*{-0.08cm}
\end{minipage}
& 
\begin{minipage}{.23\textwidth}
\centering
\includegraphics[width=1\linewidth]{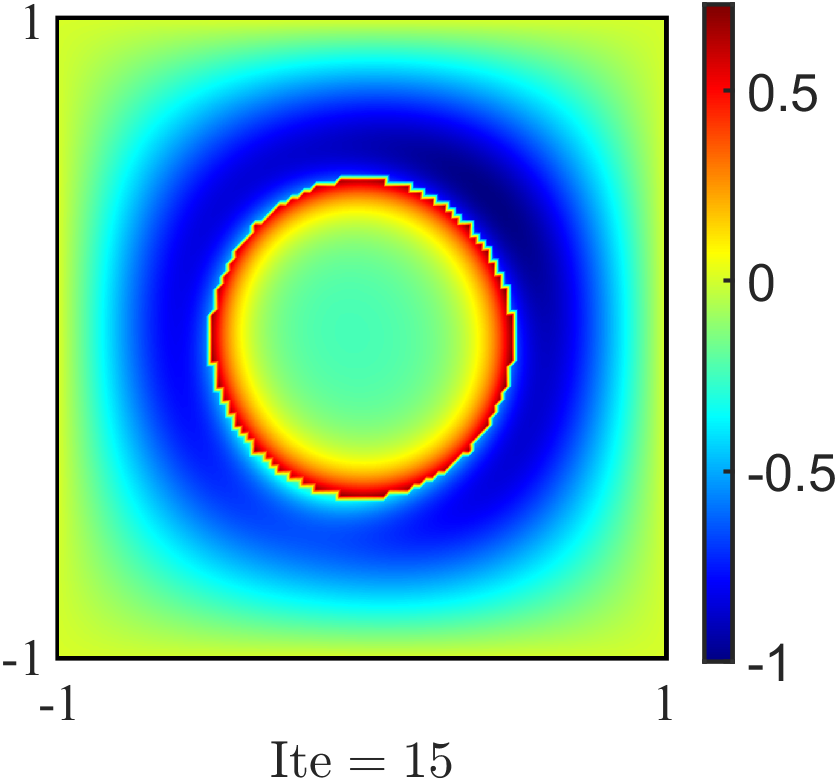}\vspace*{-0.08cm}
\end{minipage}
\\ 
\\
\makecell{DNLA \\ (PINNs)} &
\begin{minipage}{.23\textwidth}
\centering
\includegraphics[width=1\linewidth]{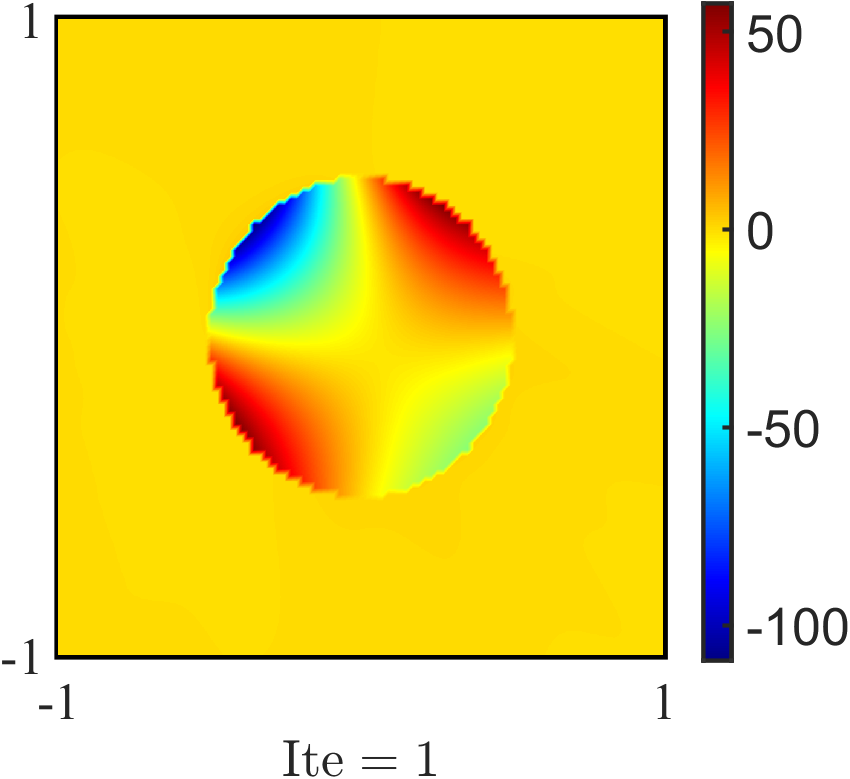}\vspace*{-0.08cm}
\end{minipage}
&
\begin{minipage}{.23\textwidth}
\centering
\includegraphics[width=1\linewidth]{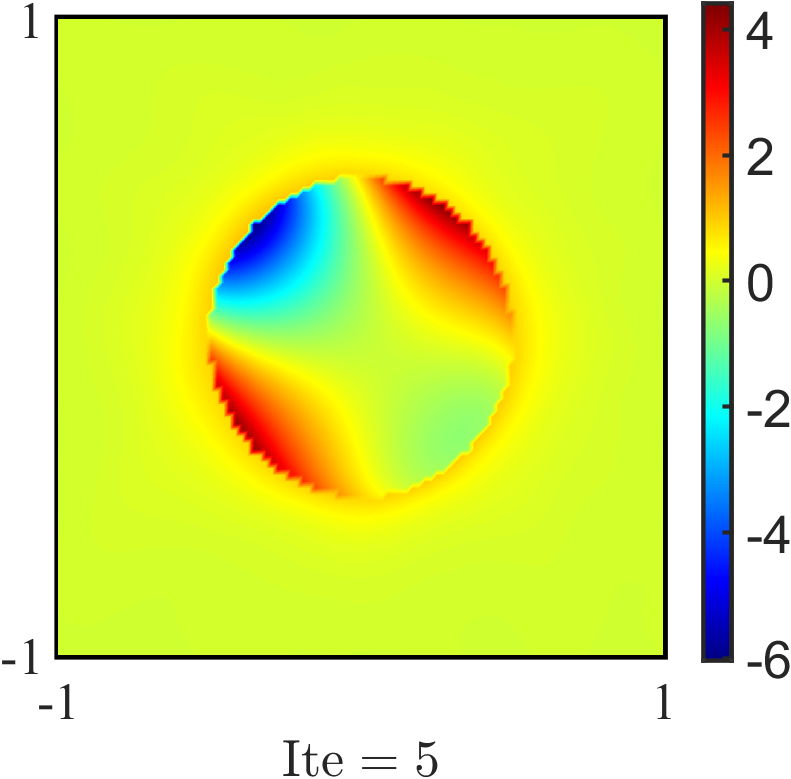}\vspace*{-0.08cm}
\end{minipage}
& 
\begin{minipage}{.23\textwidth}
\centering
\includegraphics[width=1\linewidth]{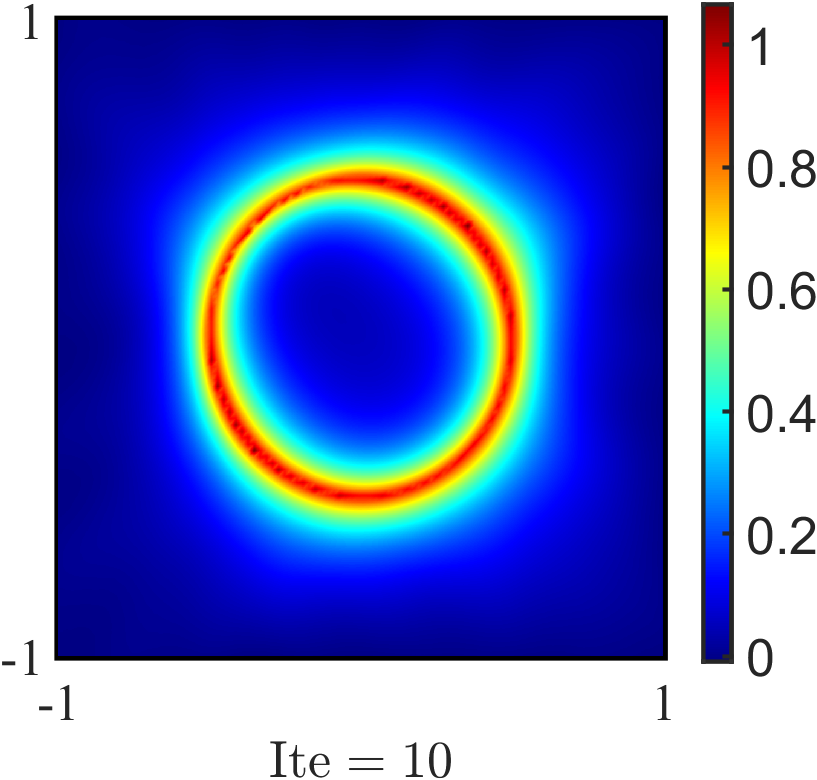}\vspace*{-0.08cm}
\end{minipage}
& 
\begin{minipage}{.23\textwidth}
\centering
\includegraphics[width=1\linewidth]{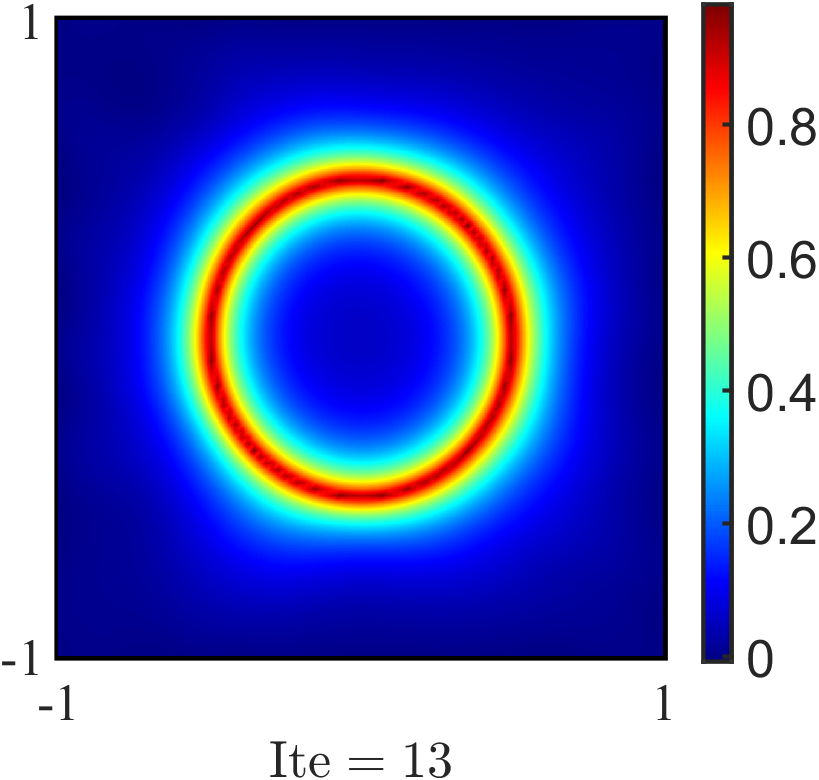}\vspace*{-0.08cm}
\end{minipage}
\\
\\
\makecell{DNLA \\ (deep Ritz)} &
\begin{minipage}{.23\textwidth}
\centering
\includegraphics[width=1.\linewidth]{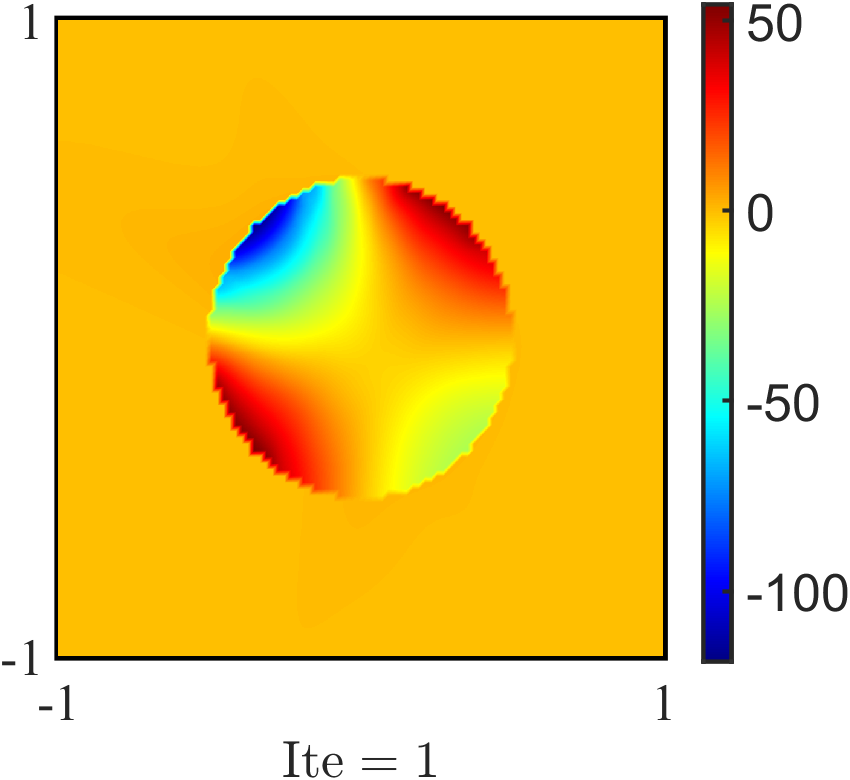}\vspace*{-0.08cm}
\end{minipage}
&
\begin{minipage}{.23\textwidth}
\centering
\includegraphics[width=1\linewidth]{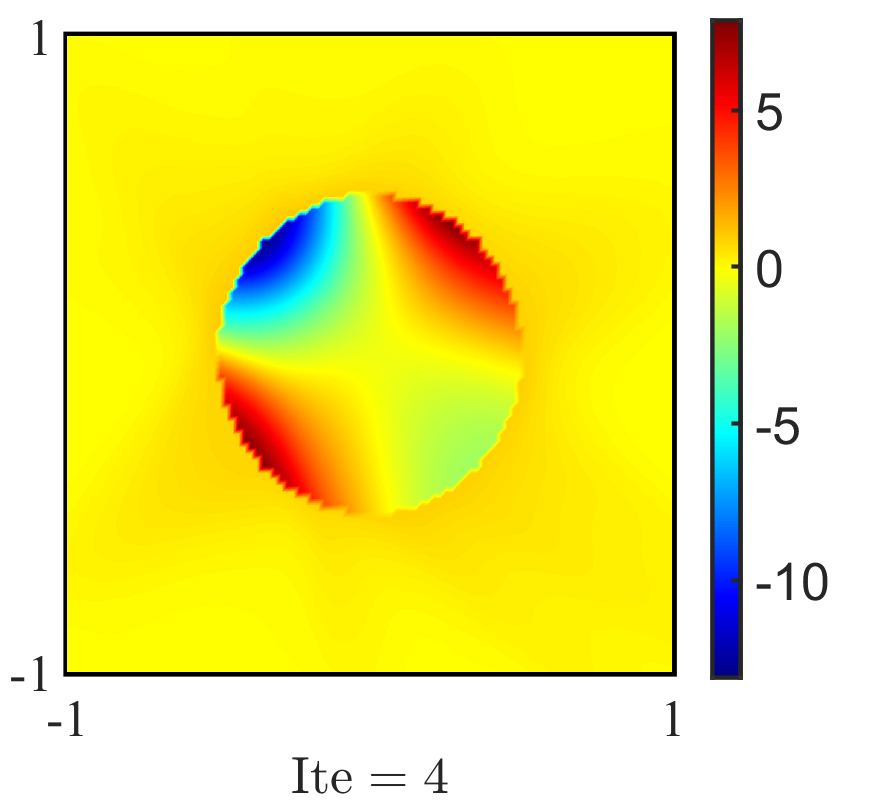}\vspace*{-0.08cm}
\end{minipage}
& 
\begin{minipage}{.23\textwidth}
\centering
\includegraphics[width=1\linewidth]{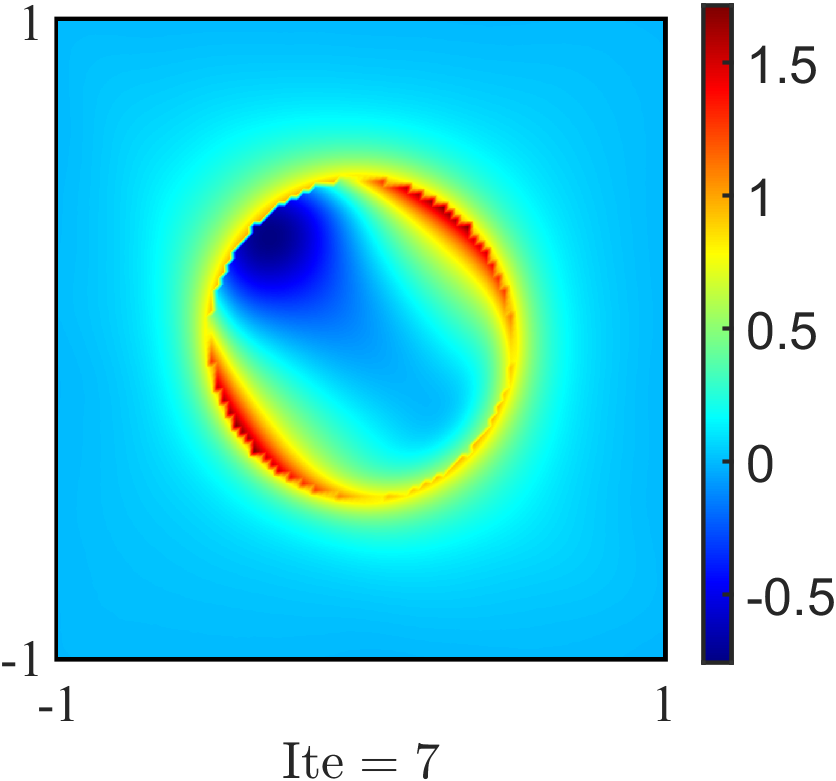}\vspace*{-0.08cm}
\end{minipage}
& 
\begin{minipage}{.23\textwidth}
\centering
\includegraphics[width=1\linewidth]{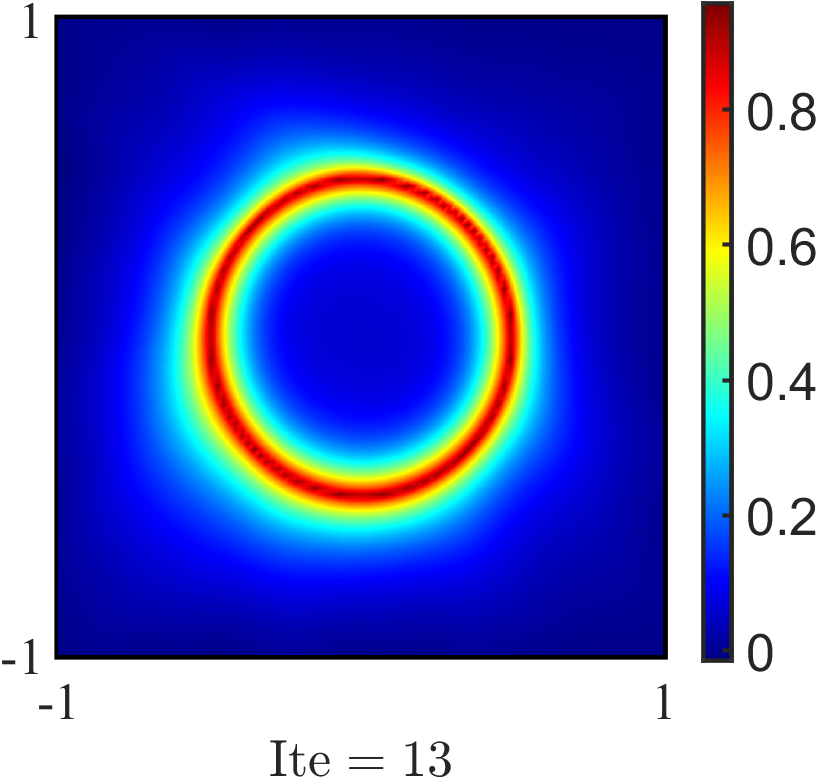}\vspace*{-0.08cm}
\end{minipage}
\end{tabular}}
\end{table}

\vspace{0.5cm}

\begin{table}[!htb]
\caption{The iterative solutions $\hat{u}^{[k]}(x,y;\theta)$ for numerical example \eqref{NumXmp2-Zigzag} with $(c_1,c_2)=(1,10^3)$.}
\centering
\adjustbox{max width=0.85\textwidth}{
\centering
\begin{tabular}{ c c  c  c }
\makecell{DeepDDM} &
\begin{minipage}{.23\textwidth}
\centering
\includegraphics[width=1\linewidth]{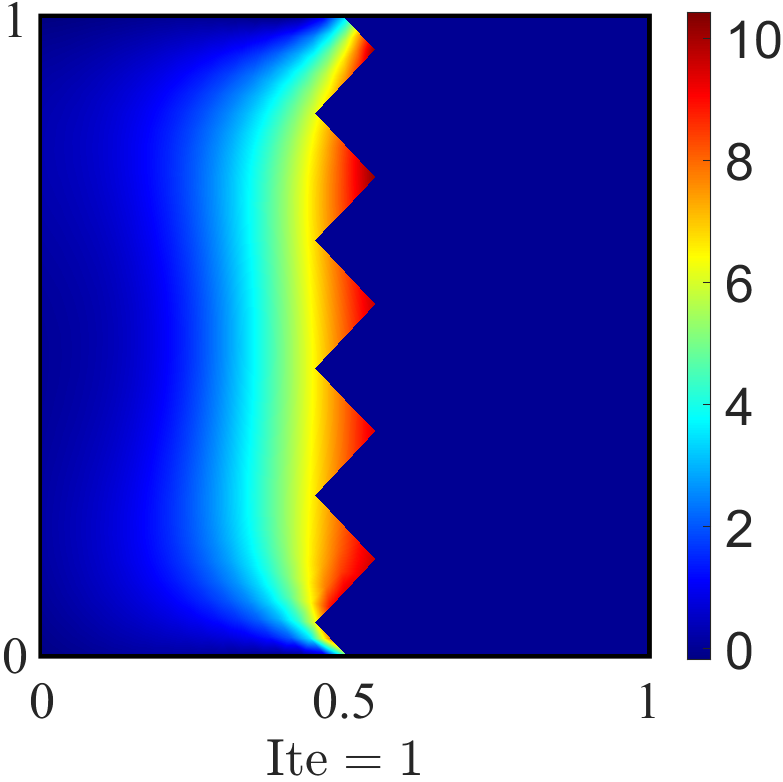}\vspace*{-0.08cm}
\end{minipage}
&
\begin{minipage}{.23\textwidth}
\centering
\includegraphics[width=1\linewidth]{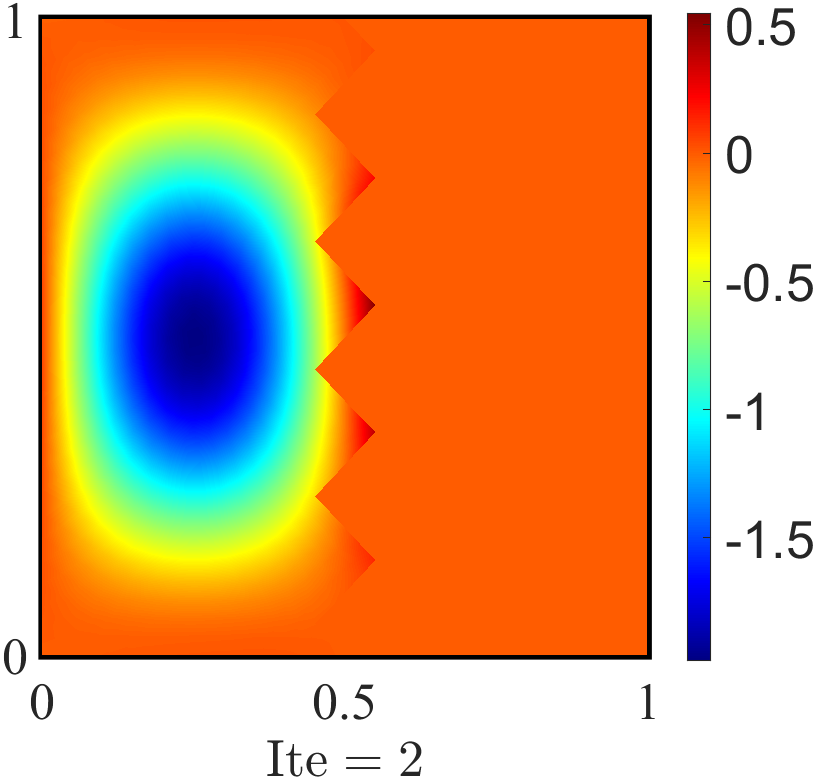}\vspace*{-0.08cm}
\end{minipage}
& 
\begin{minipage}{.23\textwidth}
\centering
\includegraphics[width=1\linewidth]{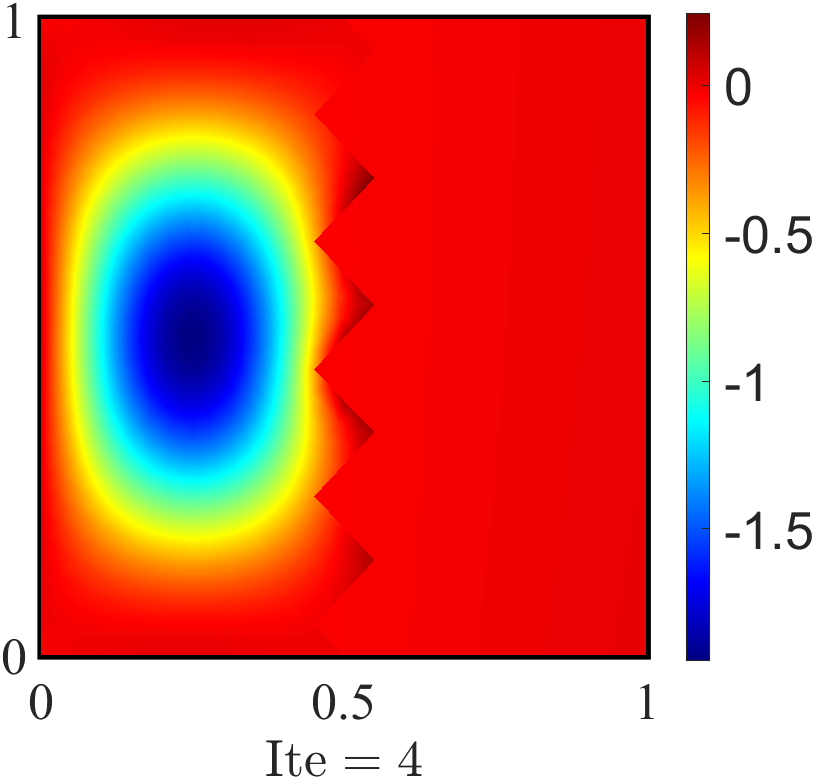}\vspace*{-0.08cm}
\end{minipage}
\\ 
\\
\makecell{DNLA \\ (PINNs)} &
\begin{minipage}{.23\textwidth}
\centering
\includegraphics[width=1\linewidth]{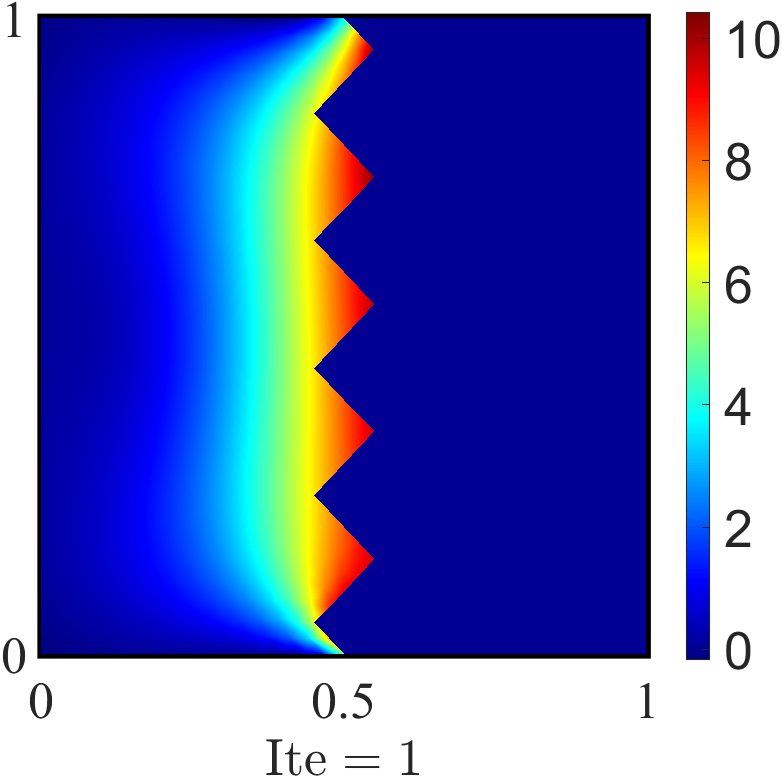}\vspace*{-0.08cm}
\end{minipage}
&
\begin{minipage}{.23\textwidth}
\centering
\includegraphics[width=1\linewidth]{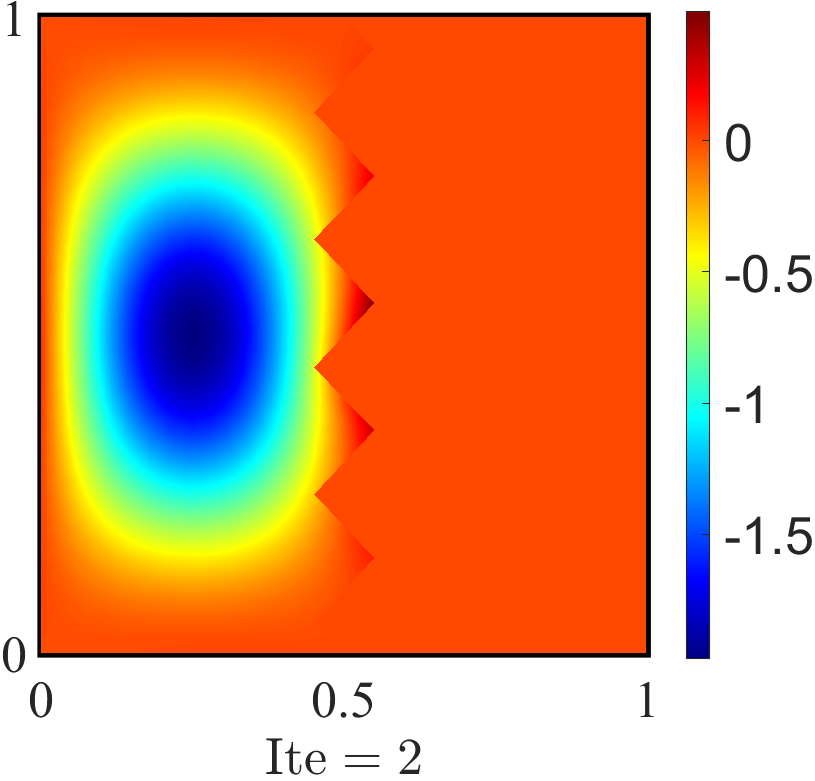}\vspace*{-0.08cm}
\end{minipage}
& 
\begin{minipage}{.23\textwidth}
\centering
\includegraphics[width=1\linewidth]{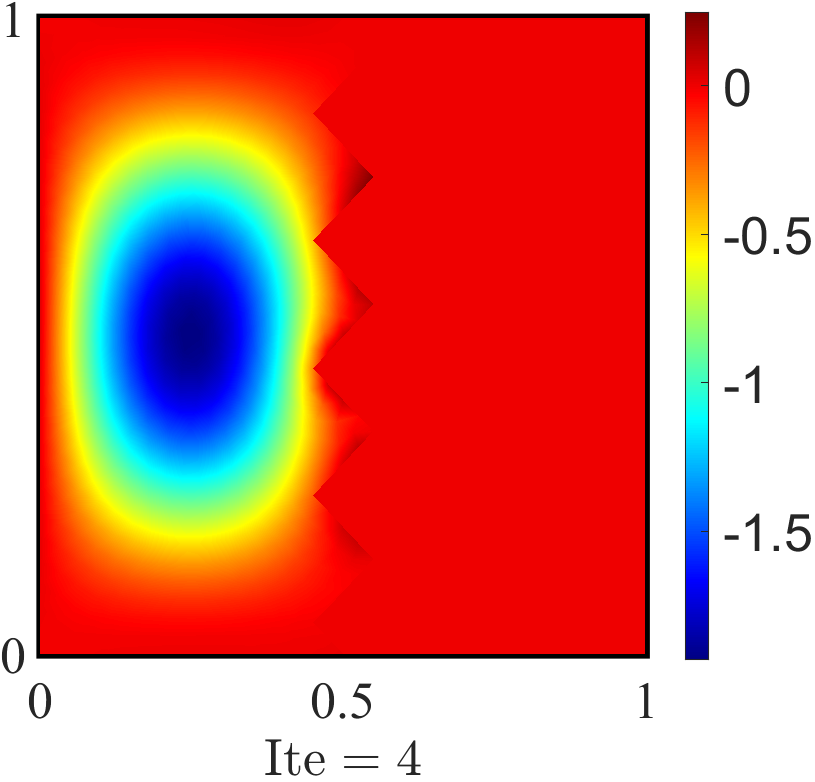}\vspace*{-0.08cm}
\end{minipage}
\\
\\
\makecell{DNLA \\ (deep Ritz)} &
\begin{minipage}{.23\textwidth}
\centering
\includegraphics[width=1\linewidth]{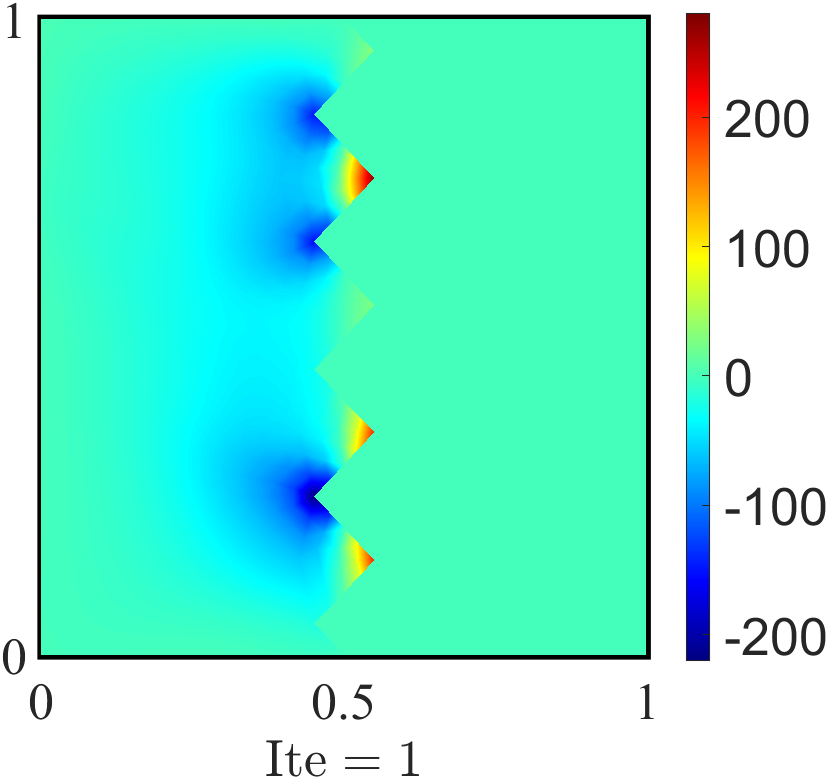}\vspace*{-0.08cm}
\end{minipage}
&
\begin{minipage}{.23\textwidth}
\centering
\includegraphics[width=1\linewidth]{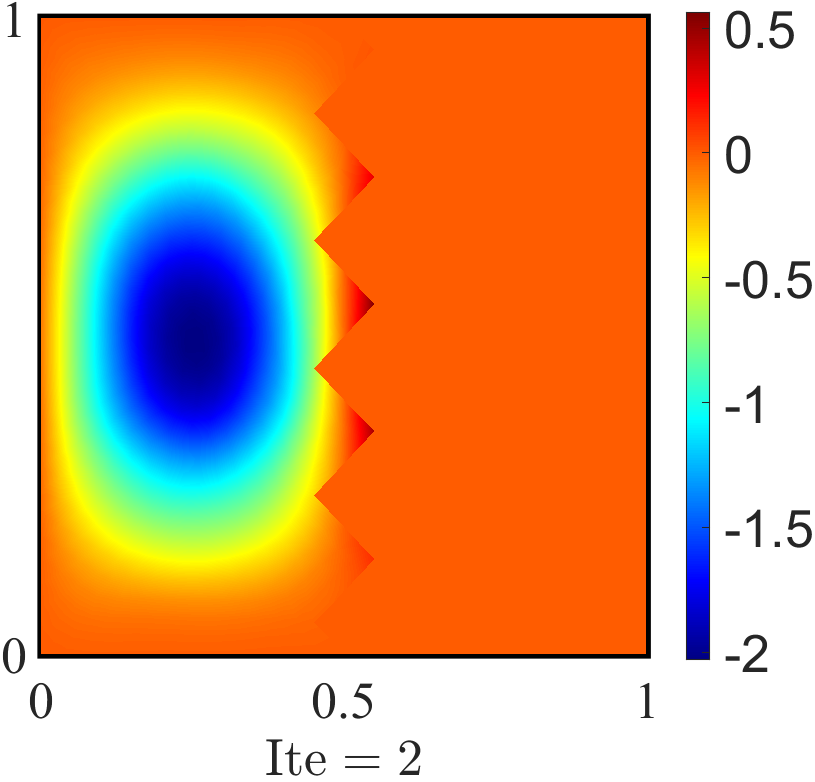}\vspace*{-0.08cm}
\end{minipage}
& 
\begin{minipage}{.23\textwidth}
\centering
\includegraphics[width=1\linewidth]{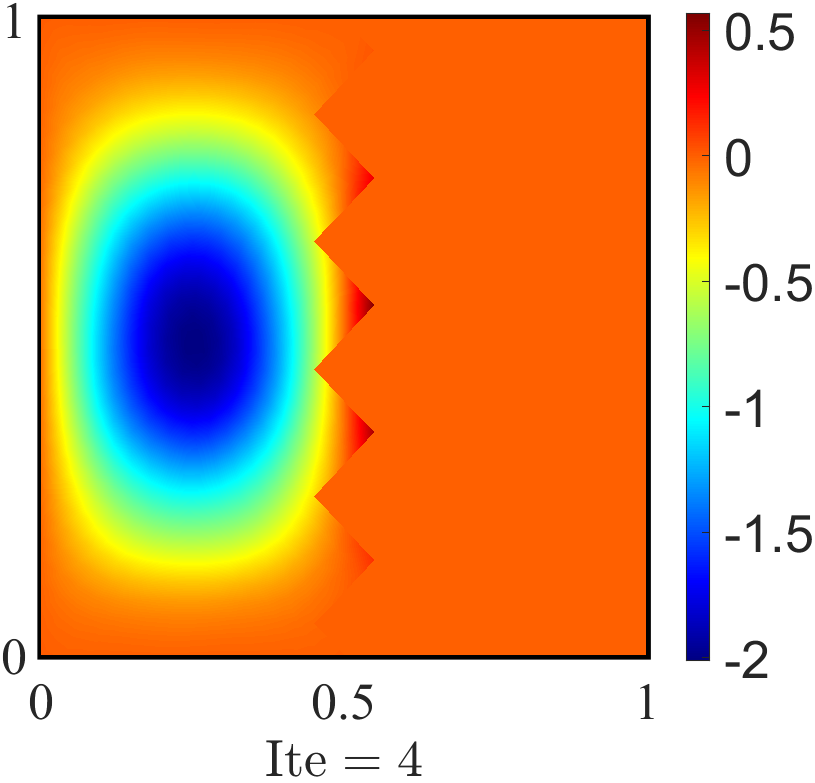}\vspace*{-0.08cm}
\end{minipage}
\end{tabular}}
\end{table}

\hfill

\begin{table}[!htb]
\caption{The iterative solutions $\hat{u}^{[k]}(x,y;\theta)$ for numerical example \eqref{NumXmp2-Zigzag} with $(c_1,c_2)=(1,1)$.}
\centering
\adjustbox{max width=\textwidth}{
\centering
\begin{tabular}{ c c  c  c  c }
\makecell{DeepDDM} &
\begin{minipage}{.23\textwidth}
\centering
\includegraphics[width=1\linewidth]{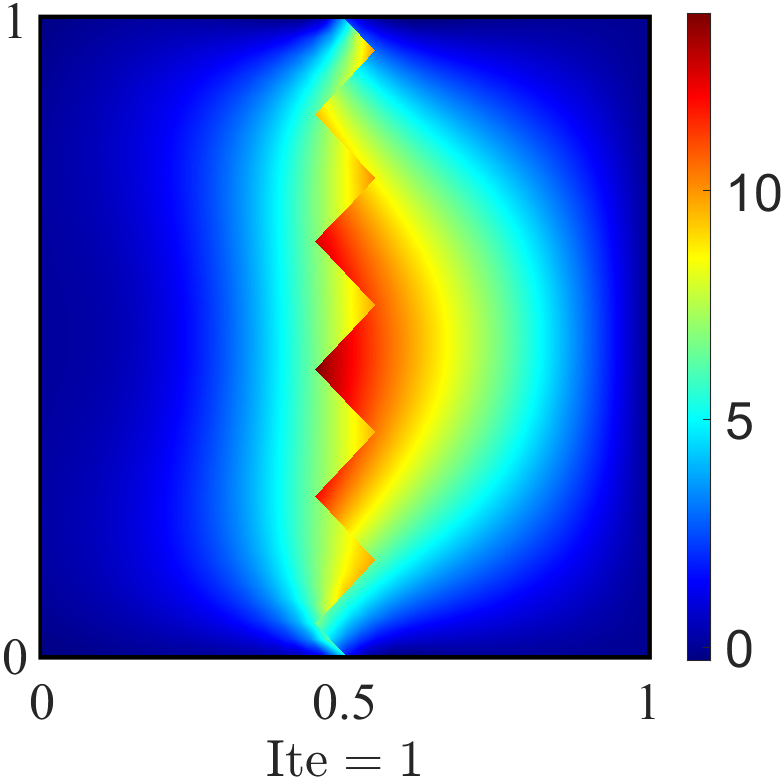}\vspace*{-0.08cm}
\end{minipage}
&
\begin{minipage}{.23\textwidth}
\centering
\includegraphics[width=1\linewidth]{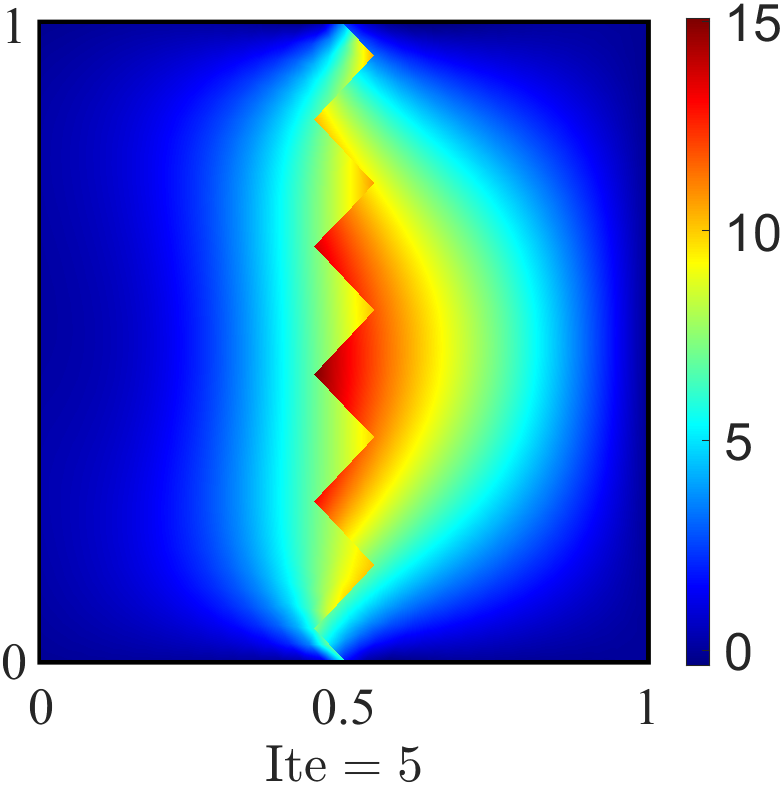}\vspace*{-0.08cm}
\end{minipage}
& 
\begin{minipage}{.23\textwidth}
\centering
\includegraphics[width=1\linewidth]{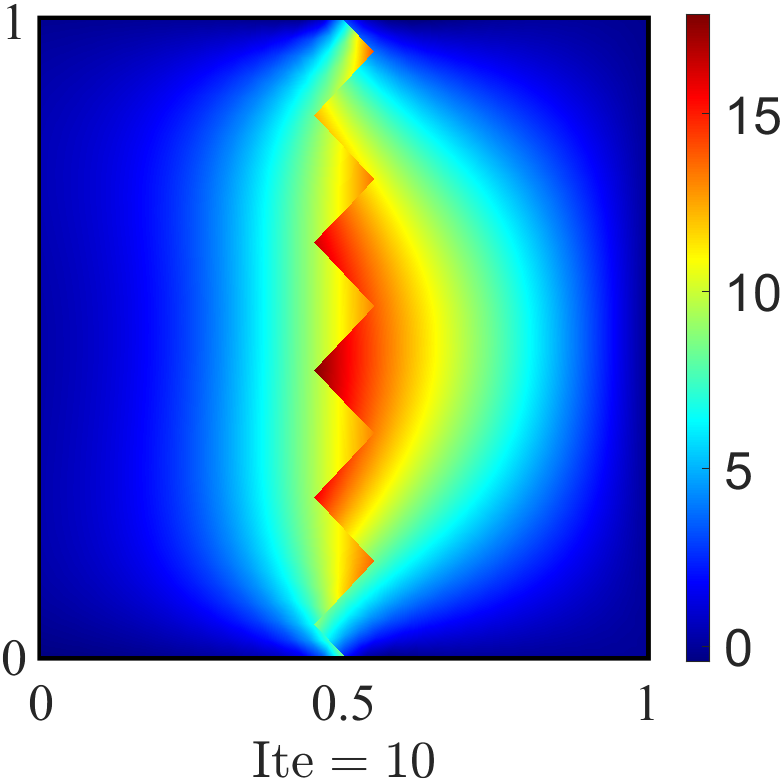}\vspace*{-0.08cm}
\end{minipage}
& 
\begin{minipage}{.23\textwidth}
\centering
\includegraphics[width=1\linewidth]{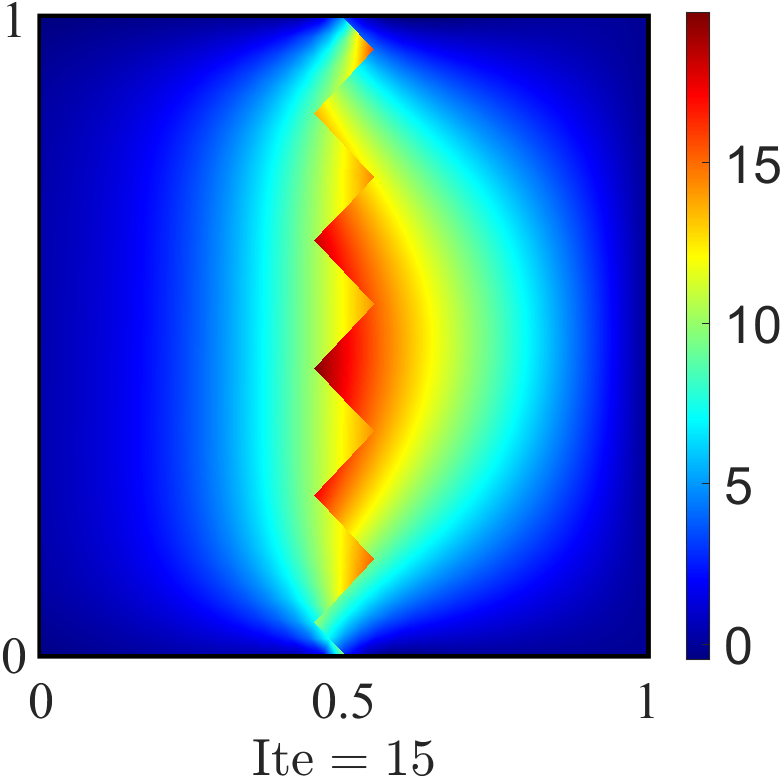}\vspace*{-0.08cm}
\end{minipage}
\\ 
\\
\makecell{DNLA \\ (PINNs)} &
\begin{minipage}{.23\textwidth}
\centering
\includegraphics[width=1\linewidth]{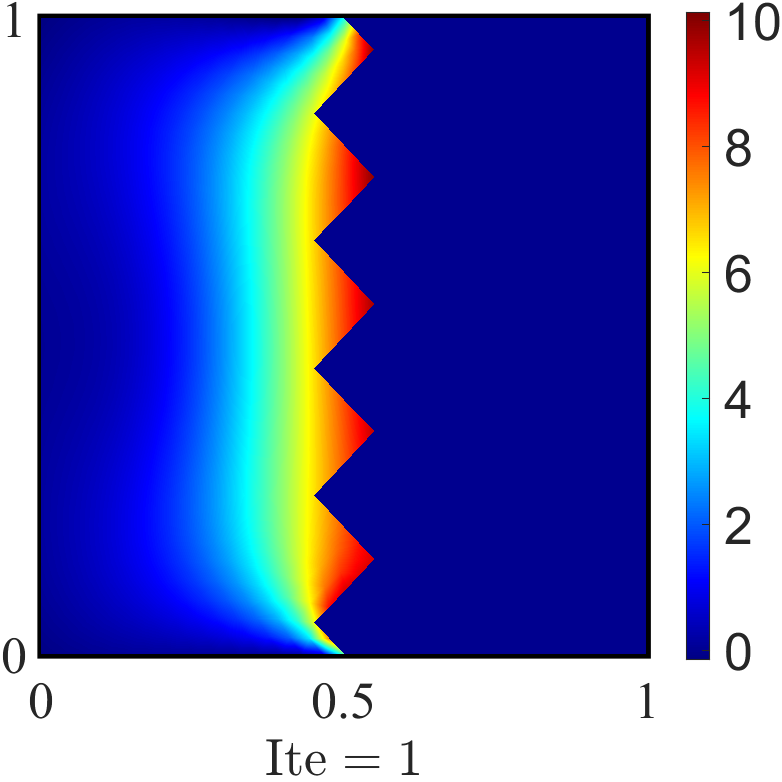}\vspace*{-0.08cm}
\end{minipage}
&
\begin{minipage}{.23\textwidth}
\centering
\includegraphics[width=1\linewidth]{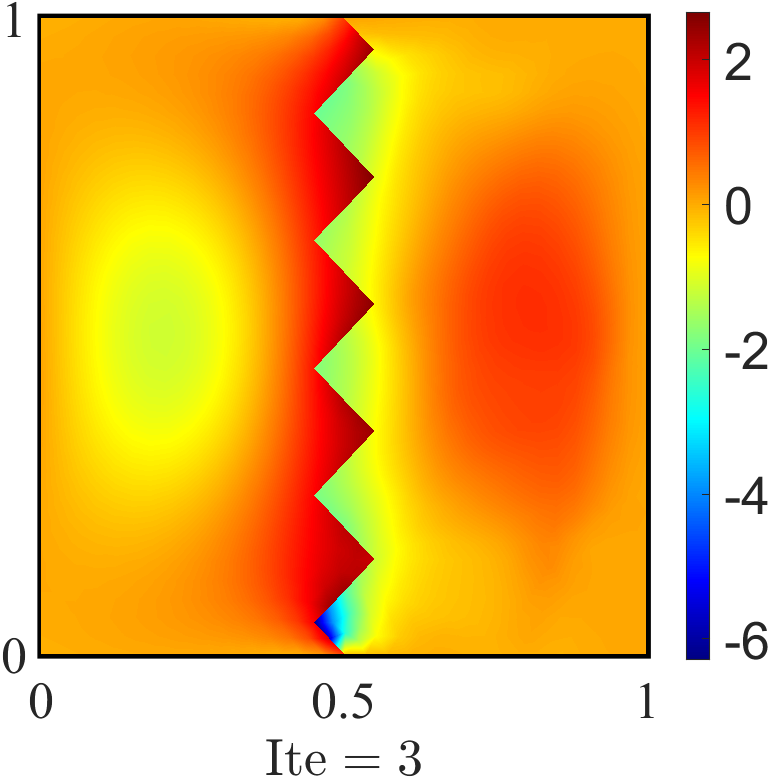}\vspace*{-0.08cm}
\end{minipage}
& 
\begin{minipage}{.23\textwidth}
\centering
\includegraphics[width=1\linewidth]{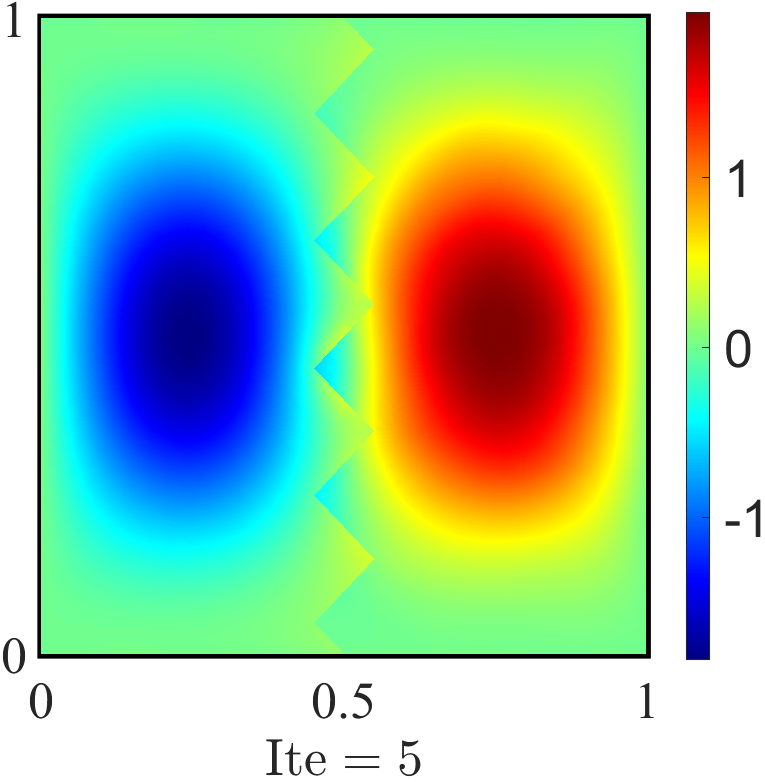}\vspace*{-0.08cm}
\end{minipage}
& 
\begin{minipage}{.23\textwidth}
\centering
\includegraphics[width=1\linewidth]{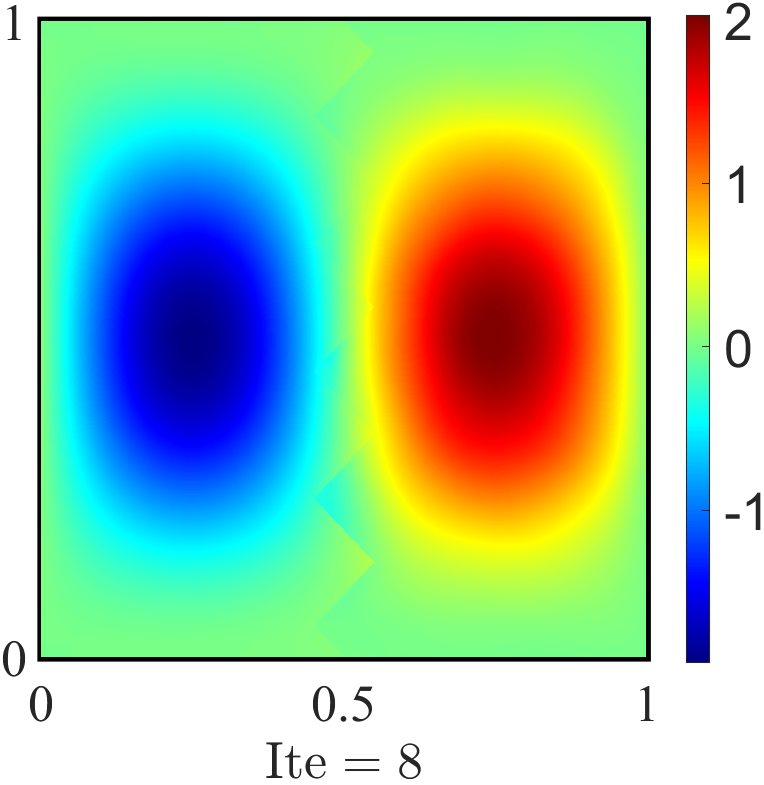}\vspace*{-0.08cm}
\end{minipage}
\\
\\
\makecell{DNLA \\ (deep Ritz)} &
\begin{minipage}{.23\textwidth}
\centering
\includegraphics[width=1\linewidth]{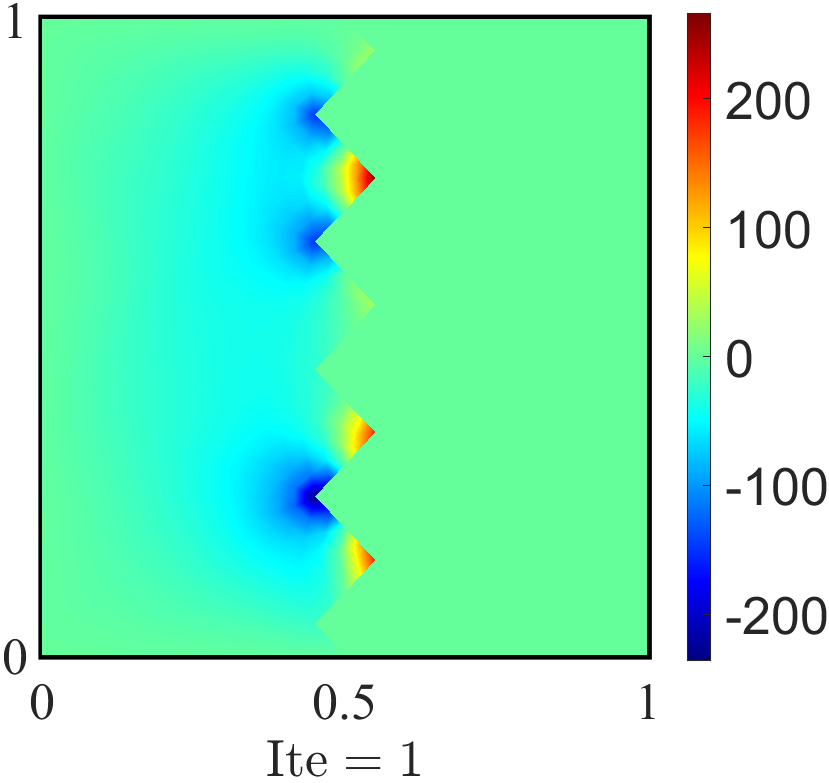}\vspace*{-0.08cm}
\end{minipage}
&
\begin{minipage}{.23\textwidth}
\centering
\includegraphics[width=1\linewidth]{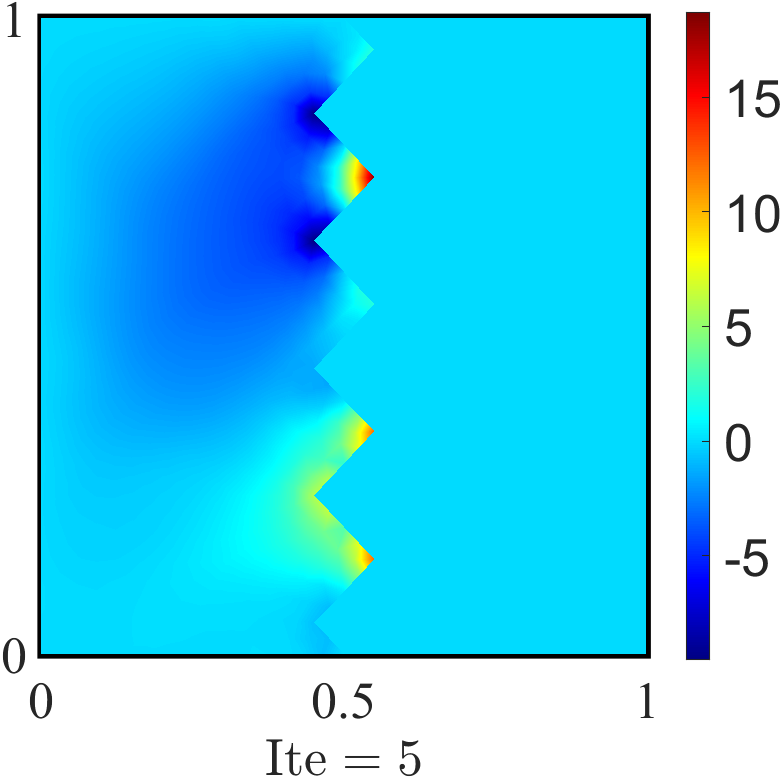}\vspace*{-0.08cm}
\end{minipage}
& 
\begin{minipage}{.23\textwidth}
\centering
\includegraphics[width=1\linewidth]{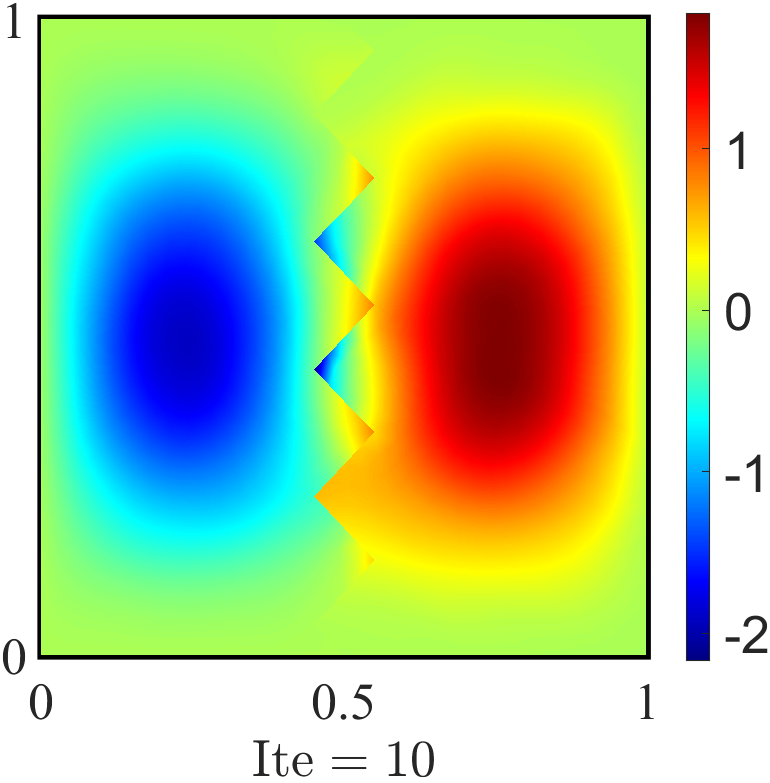}\vspace*{-0.08cm}
\end{minipage}
& 
\begin{minipage}{.23\textwidth}
\centering
\includegraphics[width=1\linewidth]{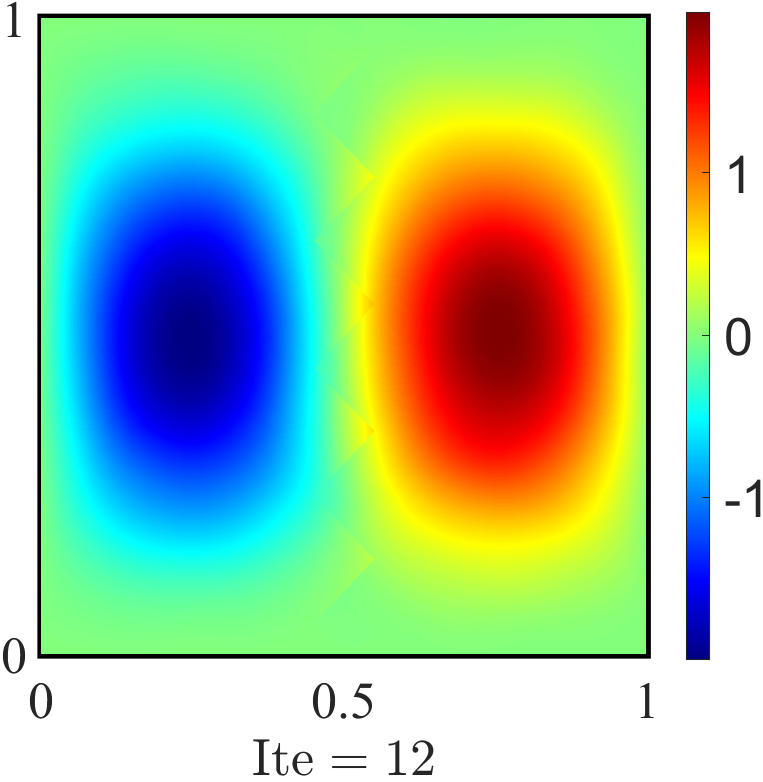}\vspace*{-0.08cm}
\end{minipage}
\end{tabular}}
\end{table}

\vspace{0.5cm}

\begin{table}[!htb]
\caption{The iterative solutions $\hat{u}^{[k]}(x,y;\theta)$ for numerical example \eqref{NumXmp3-Checkerboard} with $(c_1,c_2)=(1,10^3)$.}
\centering
\adjustbox{max width=0.85\textwidth}{
\centering
\begin{tabular}{ c c  c  c }
\makecell{DeepDDM} &
\begin{minipage}{.23\textwidth}
\centering
\includegraphics[width=1\linewidth]{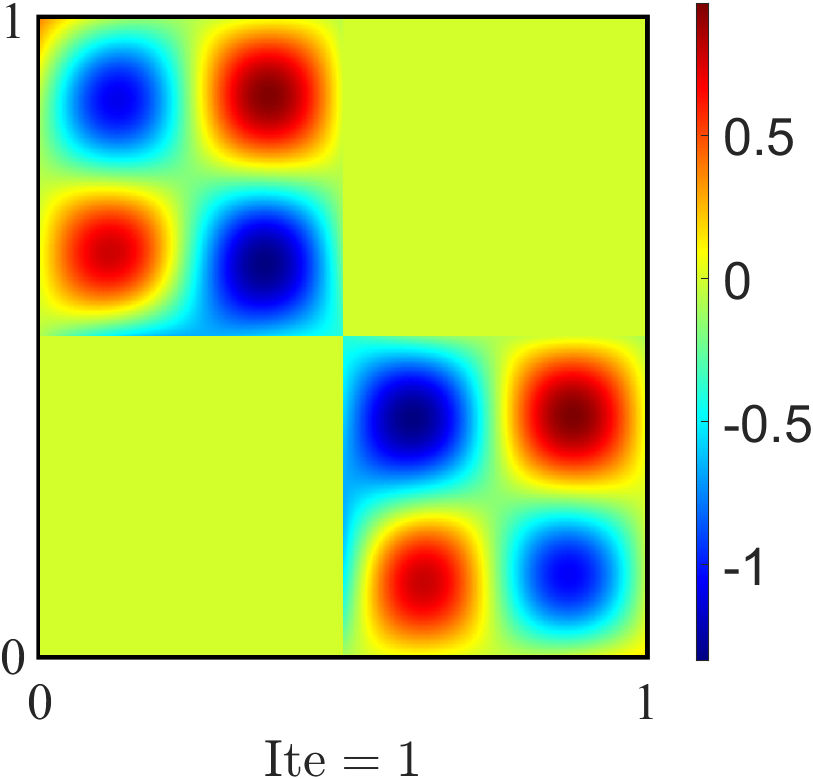}\vspace*{-0.08cm}
\end{minipage}
&
\begin{minipage}{.23\textwidth}
\centering
\includegraphics[width=1\linewidth]{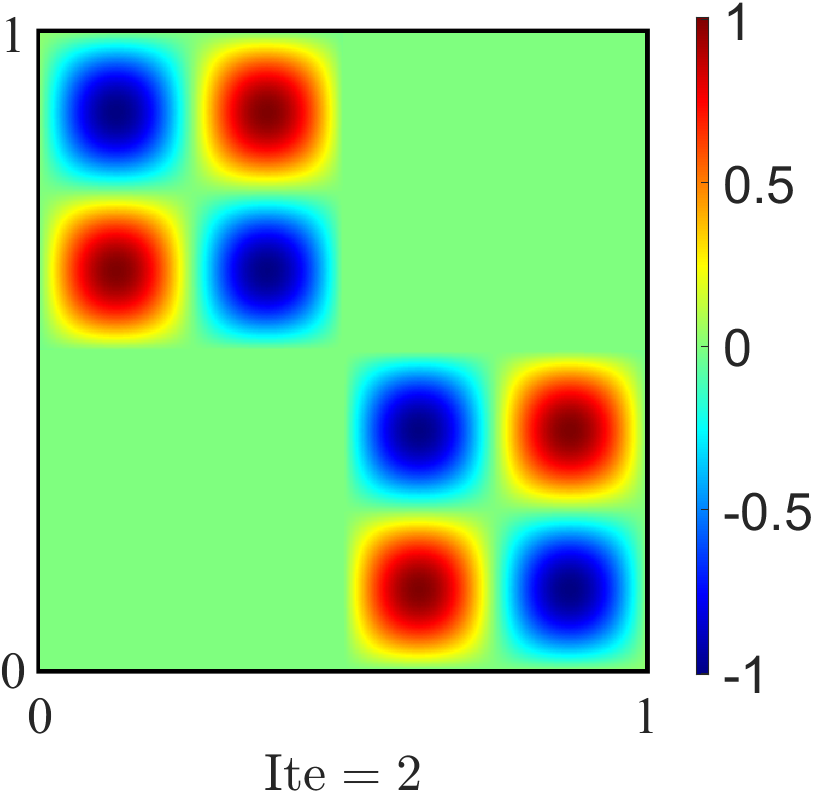}\vspace*{-0.08cm}
\end{minipage}
& 
\begin{minipage}{.23\textwidth}
\centering
\includegraphics[width=1\linewidth]{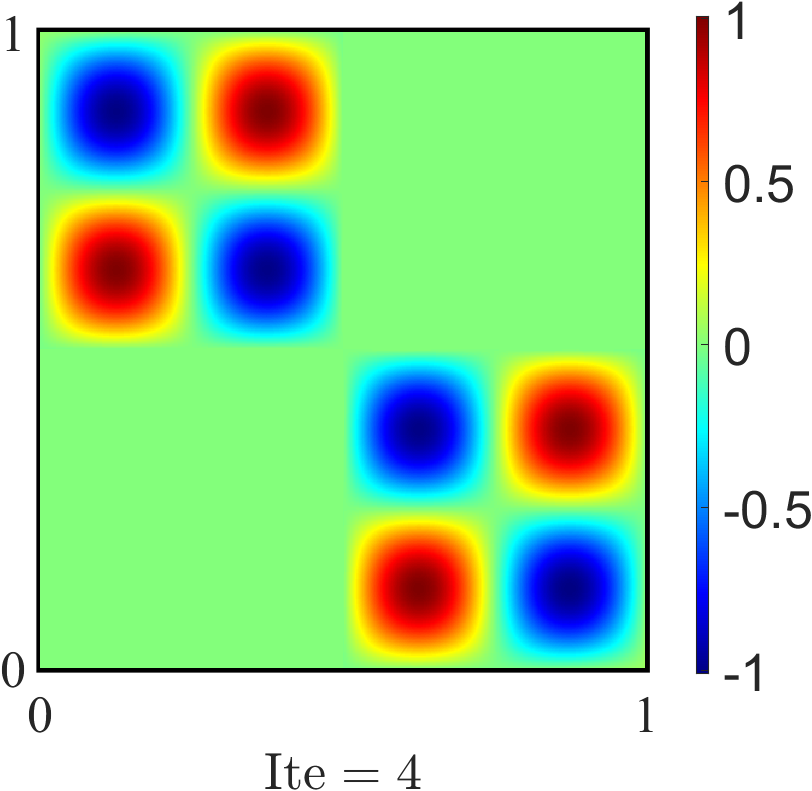}\vspace*{-0.08cm}
\end{minipage}
\\ 
\\
\makecell{DNLA \\ (PINNs)} &
\begin{minipage}{.23\textwidth}
\centering
\includegraphics[width=1\linewidth]{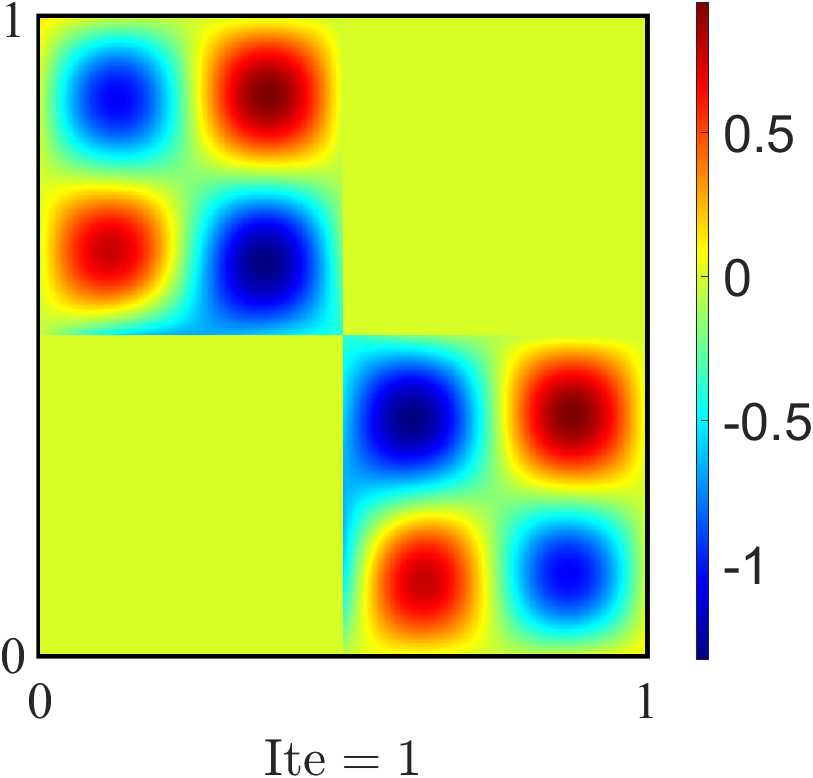}\vspace*{-0.08cm}
\end{minipage}
&
\begin{minipage}{.23\textwidth}
\centering
\includegraphics[width=1\linewidth]{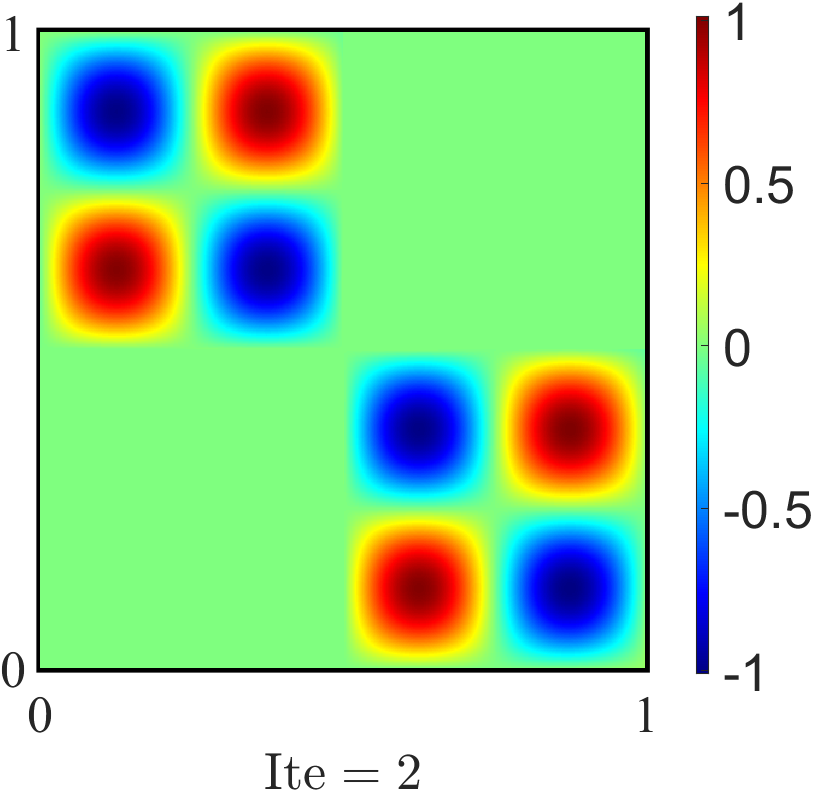}\vspace*{-0.08cm}
\end{minipage}
& 
\begin{minipage}{.23\textwidth}
\centering
\includegraphics[width=1\linewidth]{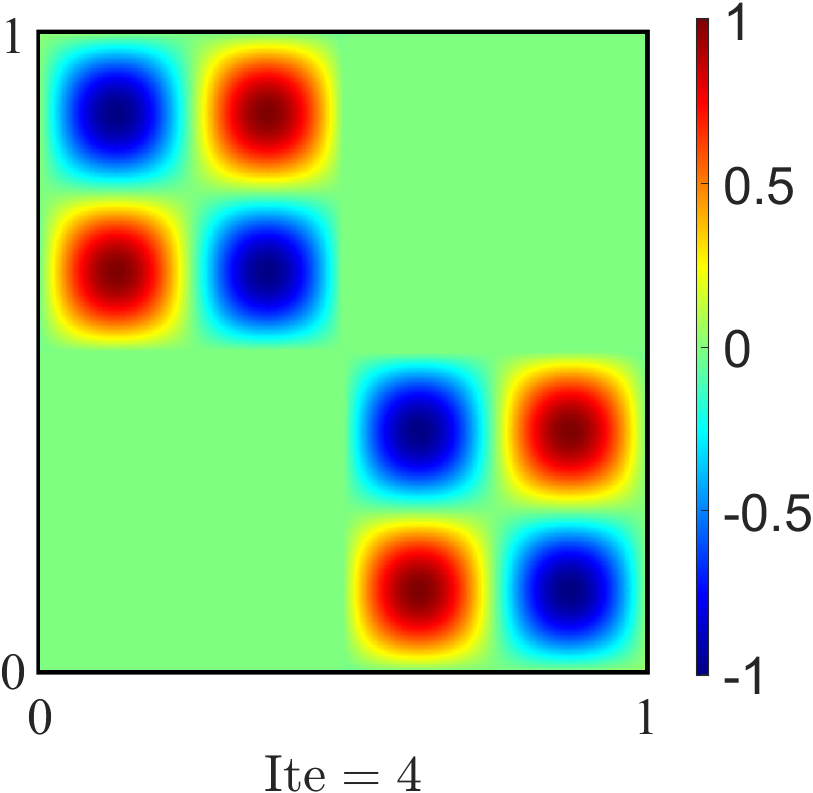}\vspace*{-0.08cm}
\end{minipage}
\\
\\
\makecell{DNLA \\ (deep Ritz)} &
\begin{minipage}{.23\textwidth}
\centering
\includegraphics[width=1.\linewidth]{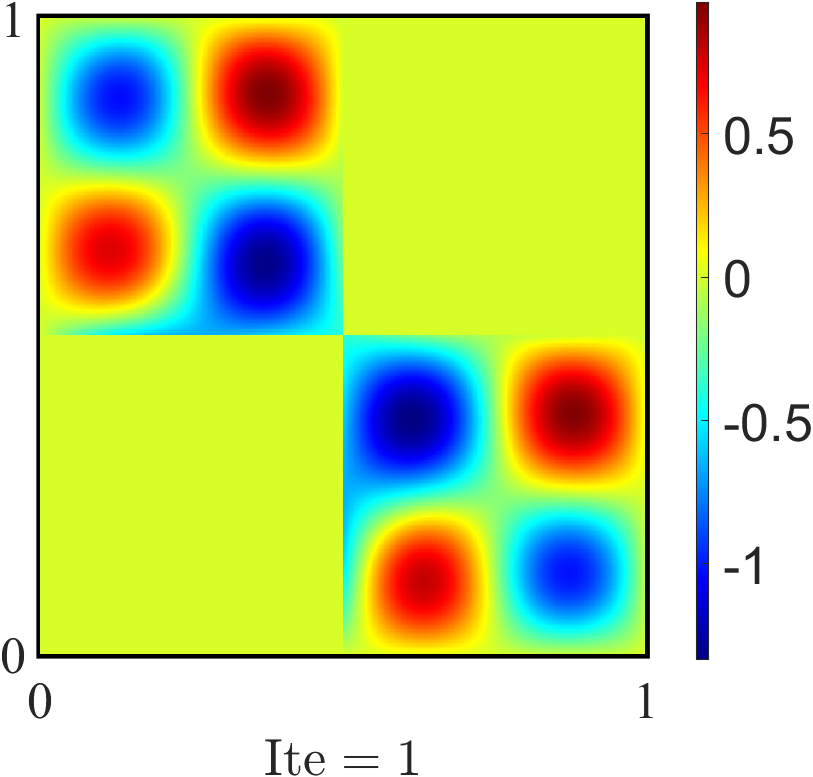}\vspace*{-0.08cm}
\end{minipage}
&
\begin{minipage}{.23\textwidth}
\centering
\includegraphics[width=1\linewidth]{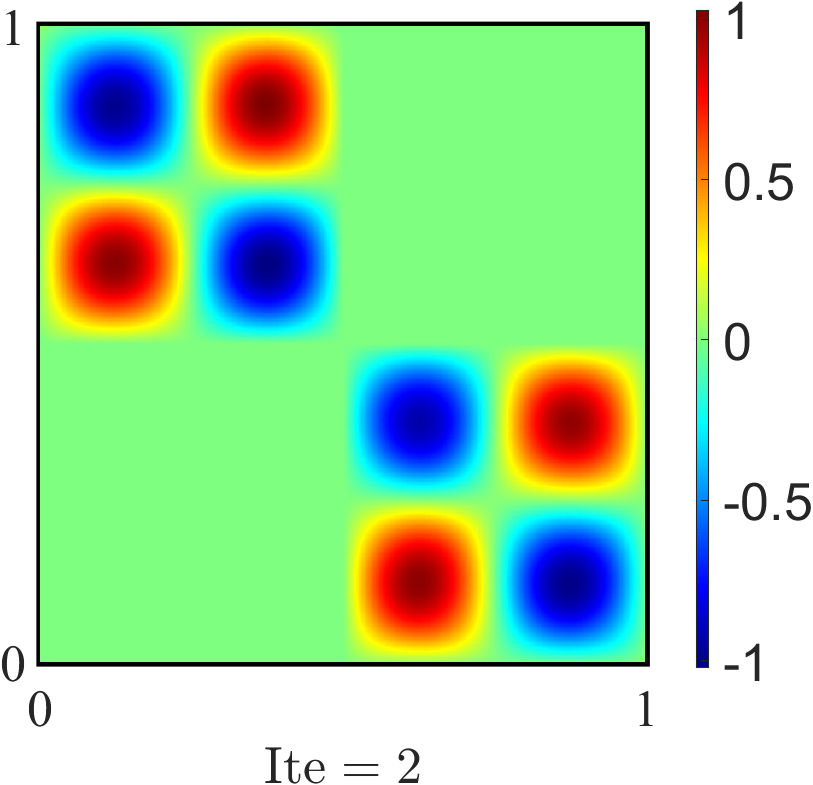}\vspace*{-0.08cm}
\end{minipage}
& 
\begin{minipage}{.23\textwidth}
\centering
\includegraphics[width=1\linewidth]{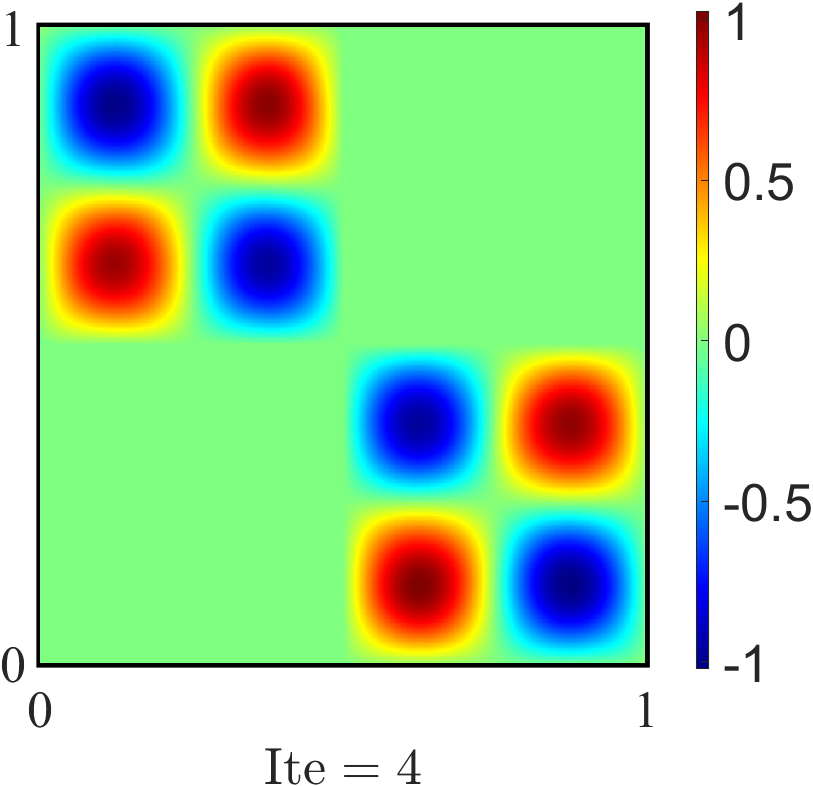}\vspace*{-0.08cm}
\end{minipage}
\end{tabular}}
\end{table}

\hfill

\begin{table}[!htb]
\caption{The iterative solutions $\hat{u}^{[k]}(x,y;\theta)$  for numerical example \eqref{NumXmp3-Checkerboard} with $(c_1,c_2)=(1,1)$.}
\centering
\adjustbox{max width=\textwidth}{
\centering
\begin{tabular}{ c c  c  c  c }
\makecell{DeepDDM} &
\begin{minipage}{.23\textwidth}
\centering
\includegraphics[width=\linewidth]{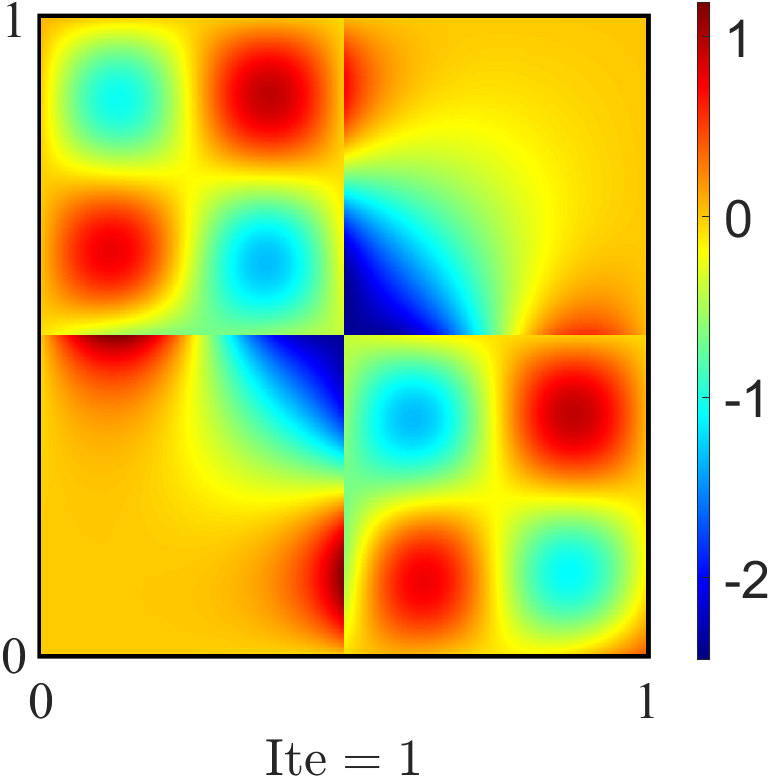}\vspace*{-0.08cm}
\end{minipage}
&
\begin{minipage}{.23\textwidth}
\centering
\includegraphics[width=\linewidth]{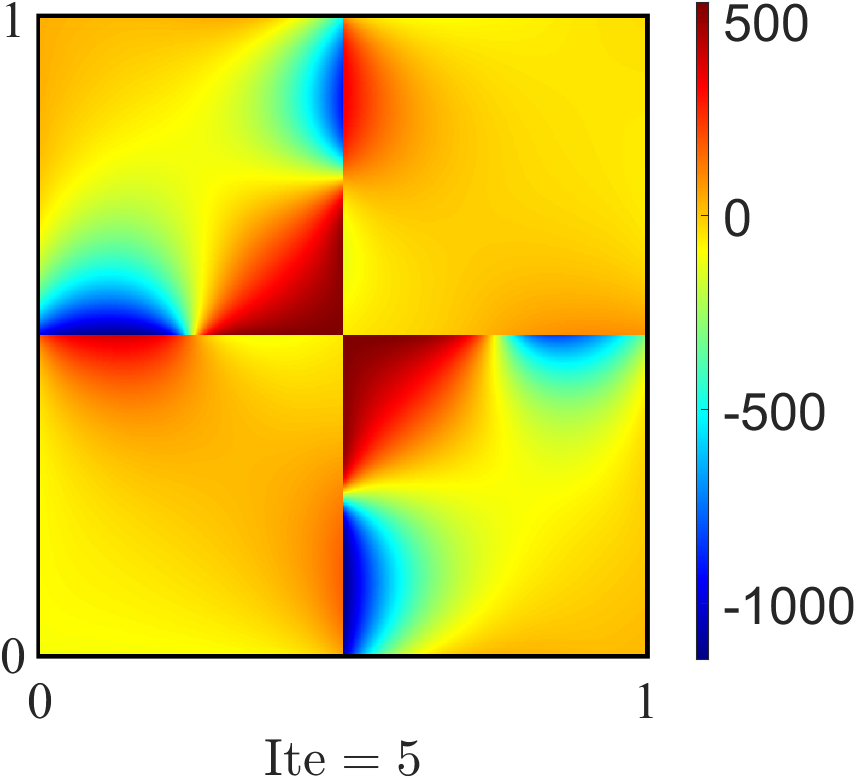}\vspace*{-0.08cm}
\end{minipage}
& 
\begin{minipage}{.23\textwidth}
\centering
\includegraphics[width=\linewidth]{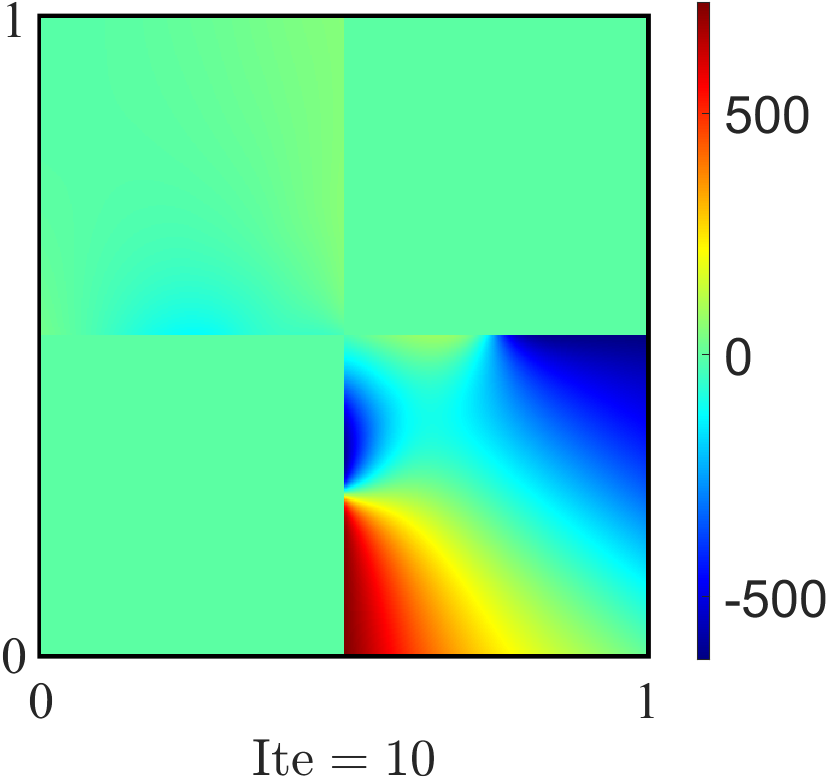}\vspace*{-0.08cm}
\end{minipage}
& 
\begin{minipage}{.23\textwidth}
\centering
\includegraphics[width=\linewidth]{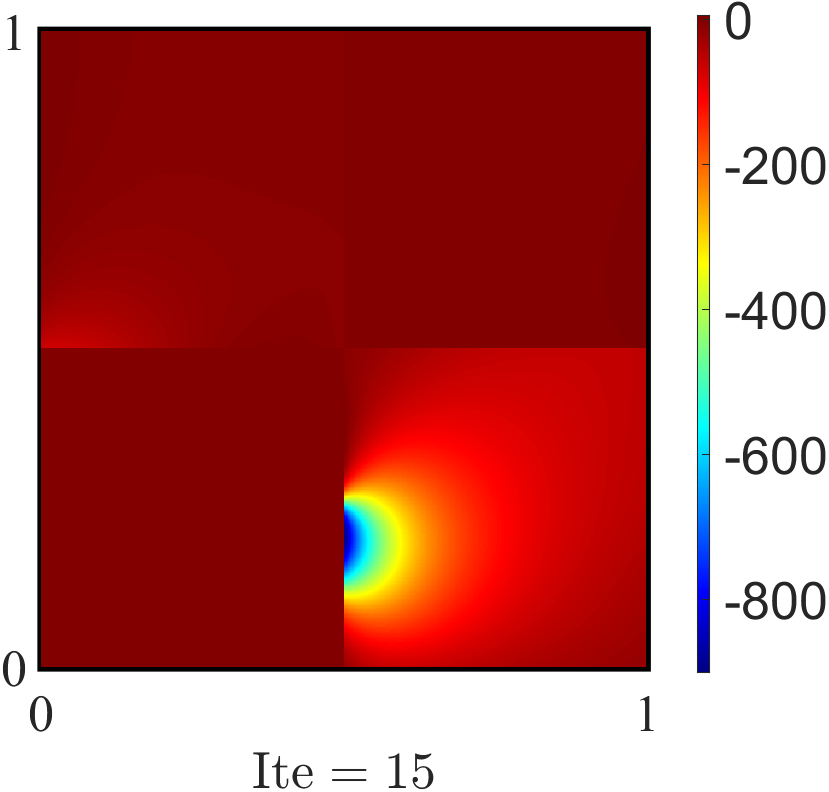}\vspace*{-0.08cm}
\end{minipage}
\\ 
\\
\makecell{DNLA \\ (PINNs)} &
\begin{minipage}{.23\textwidth}
\centering
\includegraphics[width=1\linewidth]{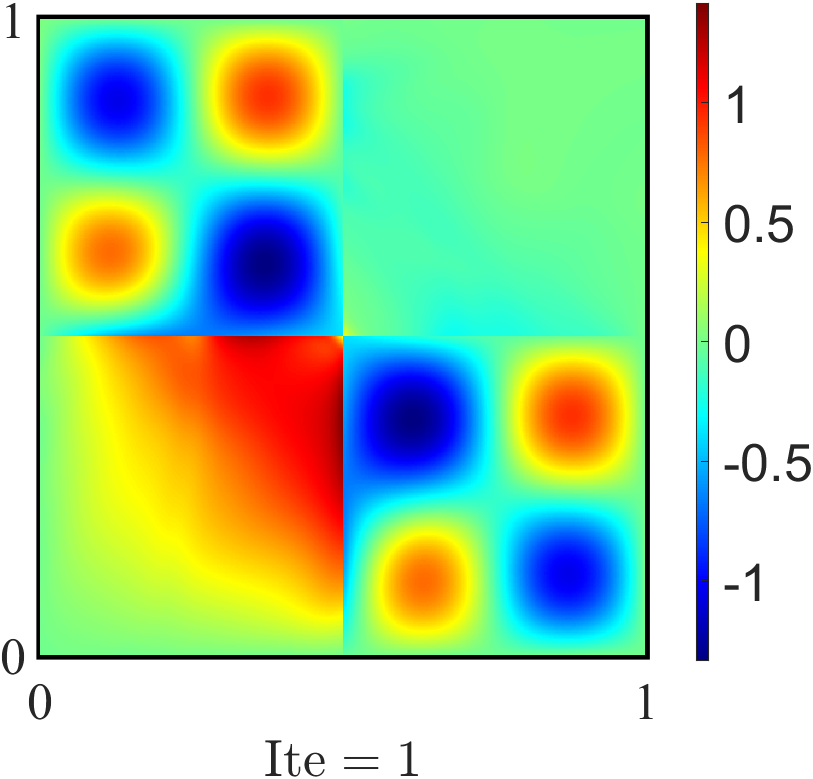}\vspace*{-0.08cm}
\end{minipage}
&
\begin{minipage}{.23\textwidth}
\centering
\includegraphics[width=1\linewidth]{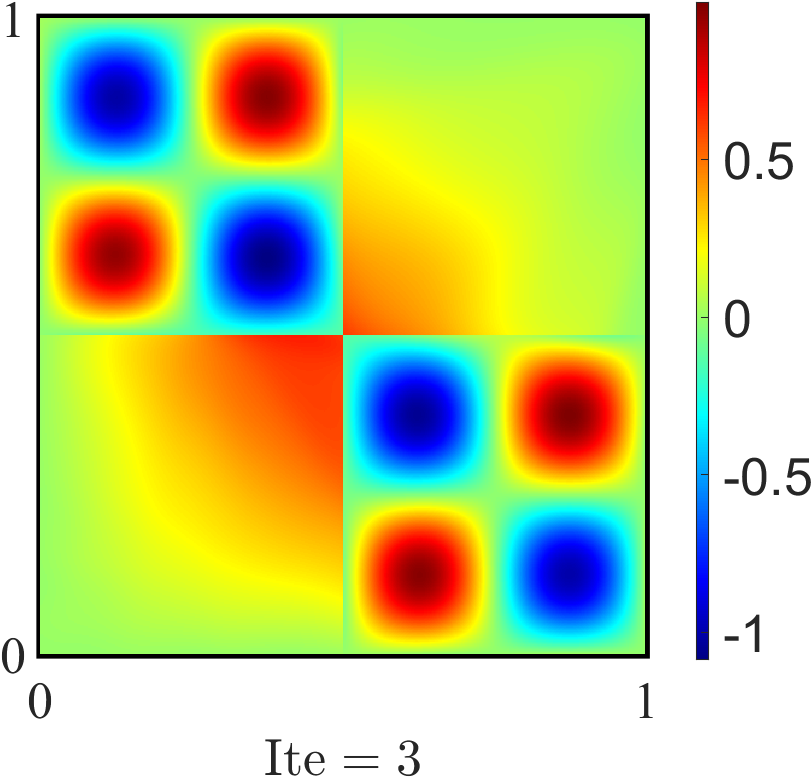}\vspace*{-0.08cm}
\end{minipage}
& 
\begin{minipage}{.23\textwidth}
\centering
\includegraphics[width=1\linewidth]{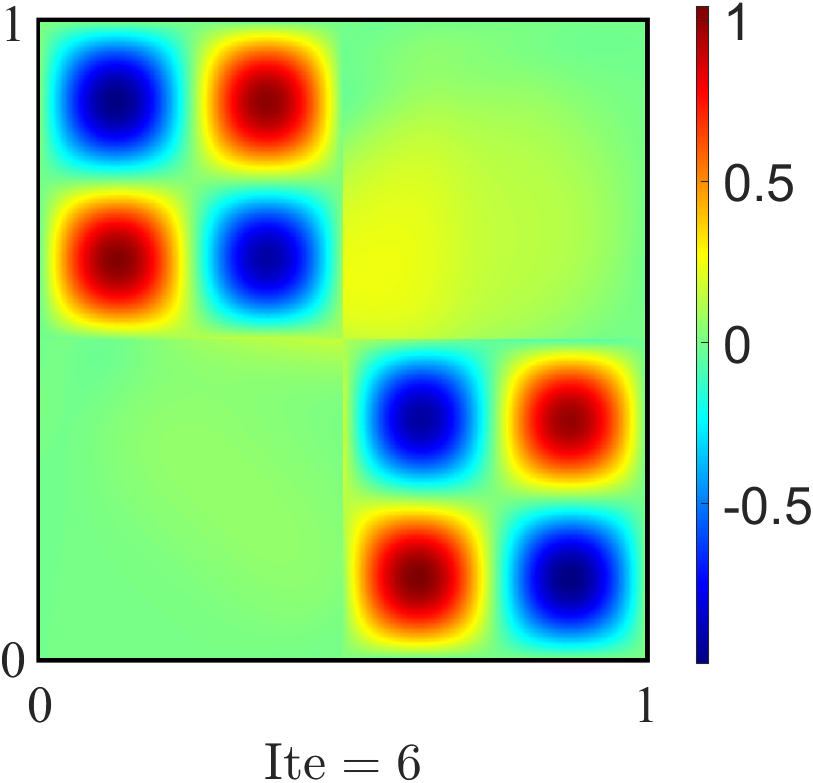}\vspace*{-0.08cm}
\end{minipage}
& 
\begin{minipage}{.23\textwidth}
\centering
\includegraphics[width=1\linewidth]{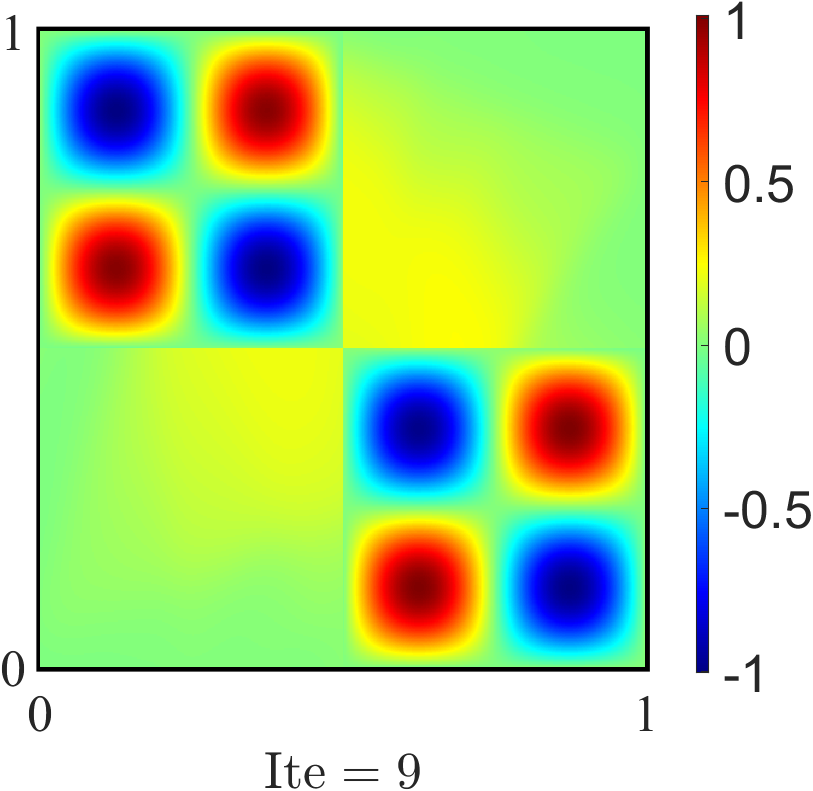}\vspace*{-0.08cm}
\end{minipage}
\\
\\
\makecell{DNLA \\ (deep Ritz)} &
\begin{minipage}{.23\textwidth}
\centering
\includegraphics[width=1.\linewidth]{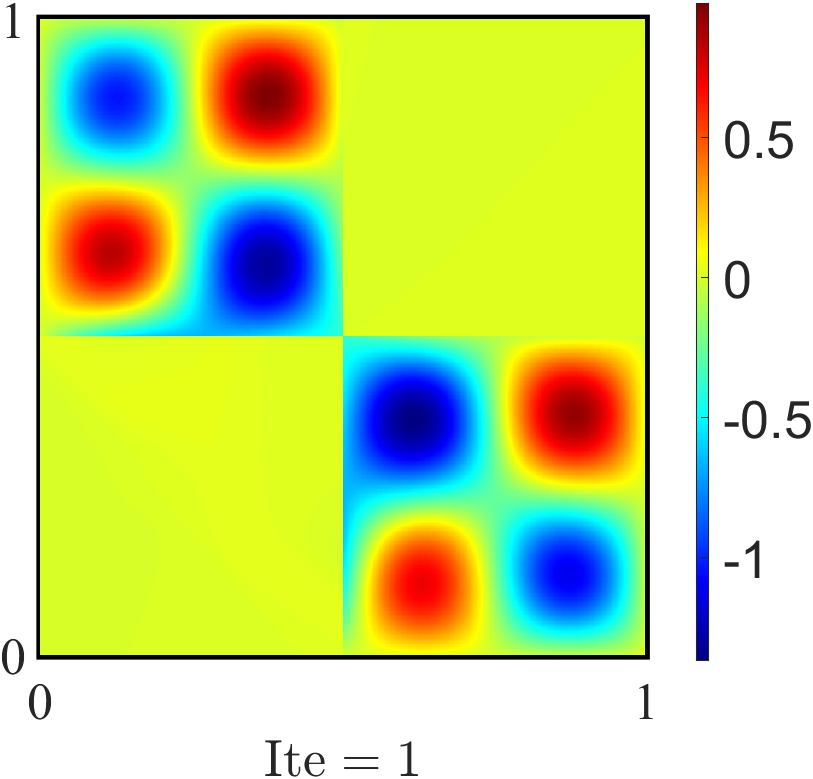}\vspace*{-0.08cm}
\end{minipage}
&
\begin{minipage}{.23\textwidth}
\centering
\includegraphics[width=1\linewidth]{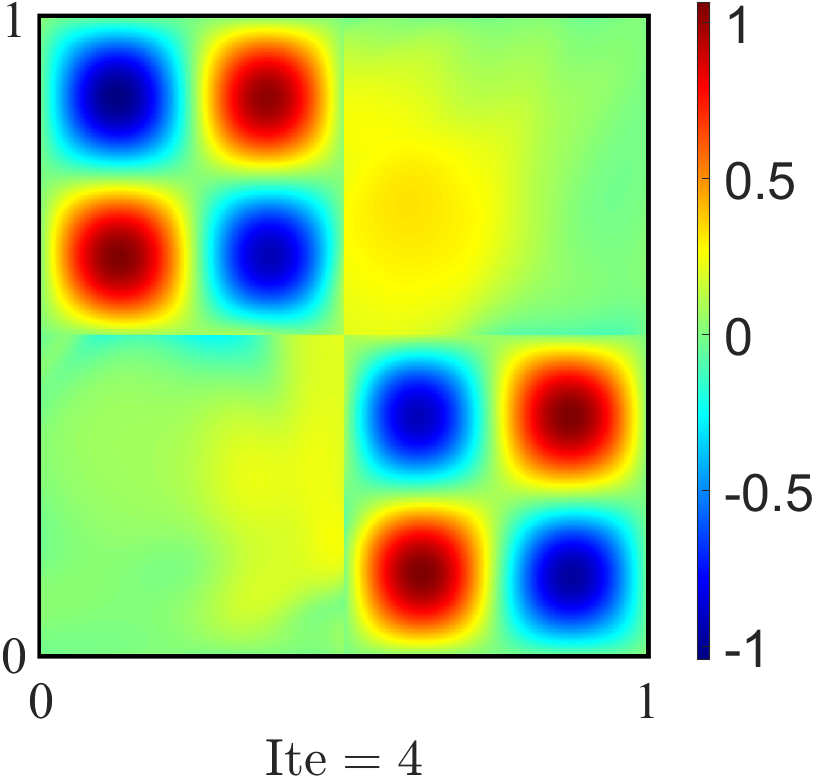}\vspace*{-0.08cm}
\end{minipage}
& 
\begin{minipage}{.23\textwidth}
\centering
\includegraphics[width=1\linewidth]{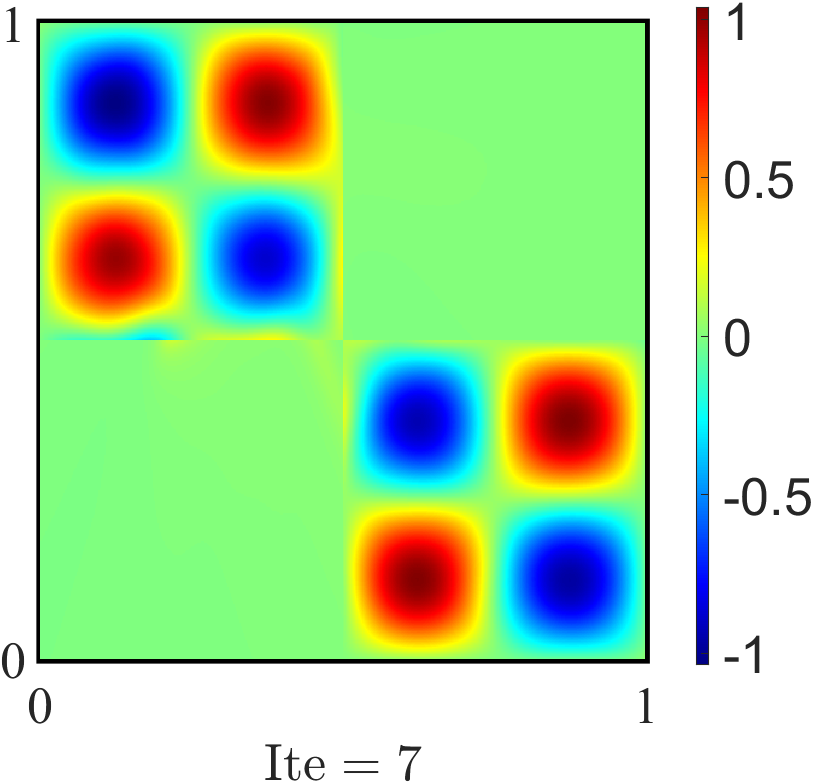}\vspace*{-0.08cm}
\end{minipage}
& 
\begin{minipage}{.23\textwidth}
\centering
\includegraphics[width=1\linewidth]{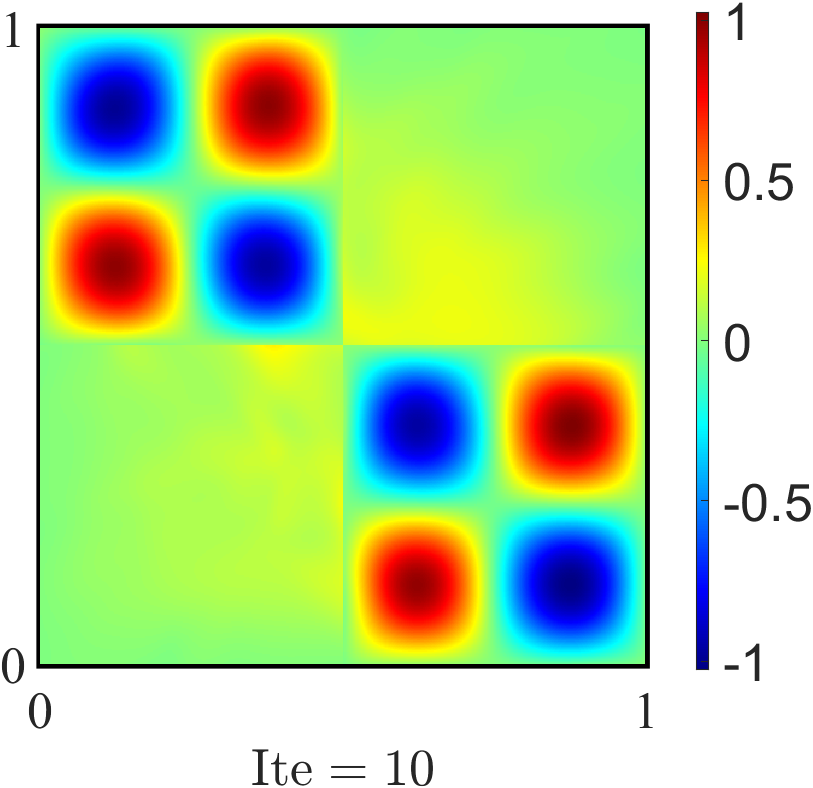}\vspace*{-0.08cm}
\end{minipage}
\end{tabular}}
\end{table}


\newpage

\section*{Acknowledgments}
The computations were done on the high performance computers of School of Mathematical Sciences, Tongji University.

\bibliographystyle{siamplain}
\bibliography{references}

\begin{thebibliography}{10}

\bibitem{bajaj2021robust}
{\sc C.~Bajaj, L.~McLennan, T.~Andeen, and A.~Roy}, {\em Robust learning of
  physics informed neural networks}, arXiv preprint arXiv:2110.13330,  (2021).

\bibitem{baker2019workshop}
{\sc N.~Baker, F.~Alexander, T.~Bremer, A.~Hagberg, Y.~Kevrekidis, H.~Najm,
  M.~Parashar, A.~Patra, J.~Sethian, S.~Wild, et~al.}, {\em Workshop report on
  basic research needs for scientific machine learning: Core technologies for
  artificial intelligence}, tech. report, 2019.

\bibitem{bramble1995interpolation}
{\sc J.~H. Bramble}, {\em Interpolation between {S}obolev spaces in {L}ipschitz
  domains with an application to multigrid theory}, Mathematics of Computation,
  64 (1995), pp.~1359--1365.

\bibitem{brenner2008mathematical}
{\sc S.~C. Brenner, L.~R. Scott, and L.~R. Scott}, {\em The mathematical theory
  of finite element methods}, vol.~3, Springer, 2008.

\bibitem{chen2017interface}
{\sc L.~Chen, H.~Wei, and M.~Wen}, {\em An interface-fitted mesh generator and
  virtual element methods for elliptic interface problems}, Journal of
  Computational Physics, 334 (2017), pp.~327--348.

\bibitem{chen1998finite}
{\sc Z.~Chen and J.~Zou}, {\em Finite element methods and their convergence for
  elliptic and parabolic interface problems}, Numerische Mathematik, 79 (1998),
  pp.~175--202.

\bibitem{daw2022rethinking}
{\sc A.~Daw, J.~Bu, S.~Wang, P.~Perdikaris, and A.~Karpatne}, {\em Rethinking
  the importance of sampling in physics-informed neural networks}, arXiv
  preprint arXiv:2207.02338,  (2022).

\bibitem{dockhorn2019discussion}
{\sc T.~Dockhorn}, {\em A discussion on solving partial differential equations
  using neural networks}, arXiv preprint arXiv:1904.07200,  (2019).

\bibitem{dolean2015introduction}
{\sc V.~Dolean, P.~Jolivet, and F.~Nataf}, {\em An introduction to domain
  decomposition methods: algorithms, theory, and parallel implementation},
  SIAM, 2015.

\bibitem{dong2021local}
{\sc S.~Dong and Z.~Li}, {\em Local extreme learning machines and domain
  decomposition for solving linear and nonlinear partial differential
  equations}, Computer Methods in Applied Mechanics and Engineering, 387
  (2021), p.~114129.

\bibitem{duan2021analysis}
{\sc C.~Duan, Y.~Jiao, Y.~Lai, X.~Lu, Q.~Quan, and J.~Z. Yang}, {\em Analysis
  of deep {R}itz methods for laplace equations with dirichlet boundary
  conditions}, arXiv preprint arXiv:2111.02009,  (2021).

\bibitem{dwivedi2021distributed}
{\sc V.~Dwivedi, N.~Parashar, and B.~Srinivasan}, {\em Distributed learning
  machines for solving forward and inverse problems in partial differential
  equations}, Neurocomputing, 420 (2021), pp.~299--316.

\bibitem{evans2010partial}
{\sc L.~C. Evans}, {\em Partial Differential Equations}, vol.~19, American
  Mathematical Society, 2010.

\bibitem{fries2010extended}
{\sc T.-P. Fries and T.~Belytschko}, {\em The extended/generalized finite
  element method: an overview of the method and its applications},
  International Journal for Numerical Methods in Engineering, 84 (2010),
  pp.~253--304.

\bibitem{gander2015optimized}
{\sc M.~J. Gander and O.~Dubois}, {\em Optimized {S}chwarz methods for a
  diffusion problem with discontinuous coefficient}, Numerical Algorithms, 69
  (2015), pp.~109--144.

\bibitem{gilbarg1977elliptic}
{\sc D.~Gilbarg, N.~S. Trudinger, D.~Gilbarg, and N.~Trudinger}, {\em Elliptic
  partial differential equations of second order}, vol.~224, Springer, 1977.

\bibitem{goodfellow2016deep}
{\sc I.~Goodfellow, Y.~Bengio, and A.~Courville}, {\em Deep learning}, MIT
  Press, 2016.

\bibitem{guo2021deep}
{\sc H.~Guo and X.~Yang}, {\em Deep unfitted {N}itsche method for elliptic
  interface problems}, arXiv preprint arXiv:2107.05325,  (2021).

\bibitem{he2022mesh}
{\sc C.~He, X.~Hu, and L.~Mu}, {\em A mesh-free method using piecewise deep
  neural network for elliptic interface problems}, Journal of Computational and
  Applied Mathematics, 412 (2022), pp.~114--358.

\bibitem{heinlein2021combining}
{\sc A.~Heinlein, A.~Klawonn, M.~Lanser, and J.~Weber}, {\em Combining machine
  learning and domain decomposition methods for the solution of partial
  differential equations—{A} review}, GAMM-Mitteilungen, 44 (2021),
  p.~e202100001.

\bibitem{hu2022discontinuity}
{\sc W.-F. Hu, T.-S. Lin, and M.-C. Lai}, {\em A discontinuity capturing
  shallow neural network for elliptic interface problems}, Journal of
  Computational Physics, 469 (2022), p.~111576.

\bibitem{hu2022hybrid}
{\sc W.-F. Hu, T.-S. Lin, Y.-H. Tseng, and M.-C. Lai}, {\em A hybrid
  neural-network and finite-difference method for solving poisson equation with
  jump discontinuities on interfaces}, arXiv preprint arXiv:2210.05523,
  (2022).

\bibitem{hu2021extended}
{\sc Z.~Hu, A.~D. Jagtap, G.~E. Karniadakis, and K.~Kawaguchi}, {\em When do
  extended physics-informed neural networks ({XPINN}s) improve
  generalization?}, arXiv preprint arXiv:2109.09444,  (2021).

\bibitem{hu2022augmented}
{\sc Z.~Hu, A.~D. Jagtap, G.~E. Karniadakis, and K.~Kawaguchi}, {\em Augmented
  physics-informed neural networks ({APINN}s): A gating network-based soft
  domain decomposition methodology}, arXiv preprint arXiv:2211.08939,  (2022).

\bibitem{jagtap2020extended}
{\sc A.~D. Jagtap and G.~E. Karniadakis}, {\em Extended physics-informed neural
  networks ({XPINN}s): A generalized space-time domain decomposition based deep
  learning framework for nonlinear partial differential equations},
  Communications in Computational Physics, 28 (2020), pp.~2002--2041.

\bibitem{jagtap2020conservative}
{\sc A.~D. Jagtap, E.~Kharazmi, and G.~E. Karniadakis}, {\em Conservative
  physics-informed neural networks on discrete domains for conservation laws:
  Applications to forward and inverse problems}, Computer Methods in Applied
  Mechanics and Engineering, 365 (2020), pp.~113--028.

\bibitem{jiao2021error}
{\sc Y.~Jiao, Y.~Lai, Y.~Lo, Y.~Wang, and Y.~Yang}, {\em Error analysis of deep
  {R}itz methods for elliptic equations}, arXiv preprint arXiv:2107.14478,
  (2021).

\bibitem{karniadakis2021physics}
{\sc G.~E. Karniadakis, I.~G. Kevrekidis, L.~Lu, P.~Perdikaris, S.~Wang, and
  L.~Yang}, {\em Physics-informed machine learning}, Nature Reviews Physics, 3
  (2021), pp.~422--440.

\bibitem{kingma2014adam}
{\sc D.~P. Kingma and J.~Ba}, {\em Adam: A method for stochastic optimization},
  arXiv preprint arXiv:1412.6980,  (2014).

\bibitem{lagaris1998artificial}
{\sc I.~E. Lagaris, A.~Likas, and D.~I. Fotiadis}, {\em Artificial neural
  networks for solving ordinary and partial differential equations}, IEEE
  transactions on neural networks, 9 (1998), pp.~987--1000.

\bibitem{lagaris2000neural}
{\sc I.~E. Lagaris, A.~C. Likas, and D.~G. Papageorgiou}, {\em Neural-network
  methods for boundary value problems with irregular boundaries}, IEEE
  Transactions on Neural Networks, 11 (2000), pp.~1041--1049.

\bibitem{lai2022shallow}
{\sc M.-C. Lai, C.-C. Chang, W.-S. Lin, W.-F. Hu, and T.-S. Lin}, {\em A
  shallow {R}itz method for elliptic problems with singular sources}, Journal
  of Computational Physics,  (2022), p.~111547.

\bibitem{leveque2007finite}
{\sc R.~J. LeVeque}, {\em Finite difference methods for ordinary and partial
  differential equations: steady-state and time-dependent problems}, SIAM,
  2007.

\bibitem{leveque2002finite}
{\sc R.~J. LeVeque et~al.}, {\em Finite volume methods for hyperbolic
  problems}, vol.~31, Cambridge university press, 2002.

\bibitem{leveque1994immersed}
{\sc R.~J. LeVeque and Z.~Li}, {\em The immersed interface method for elliptic
  equations with discontinuous coefficients and singular sources}, SIAM Journal
  on Numerical Analysis, 31 (1994), pp.~1019--1044.

\bibitem{li2019d3m}
{\sc K.~Li, K.~Tang, T.~Wu, and Q.~Liao}, {\em {D3M}: A deep domain
  decomposition method for partial differential equations}, IEEE Access, 8
  (2019), pp.~5283--5294.

\bibitem{li2022deep}
{\sc S.~Li, Y.~Xia, Y.~Liu, and Q.~Liao}, {\em A deep domain decomposition
  method based on {F}ourier features}, arXiv preprint arXiv:2205.01884,
  (2022).

\bibitem{li2020deep}
{\sc W.~Li, X.~Xiang, and Y.~Xu}, {\em Deep domain decomposition method:
  Elliptic problems}, in Mathematical and Scientific Machine Learning, PMLR,
  2020, pp.~269--286.

\bibitem{li2006immersed}
{\sc Z.~Li and K.~Ito}, {\em The immersed interface method: Numerical solutions
  of {PDE}s involving interfaces and irregular domains}, SIAM, 2006.

\bibitem{lions2012non}
{\sc J.~L. Lions and E.~Magenes}, {\em Non-homogeneous boundary value problems
  and applications: Vol. 1}, vol.~181, Springer Science \& Business Media,
  2012.

\bibitem{mercier2003minimal}
{\sc D.~Mercier}, {\em Minimal regularity of the solutions of some transmission
  problems}, Mathematical Methods in the Applied Sciences, 26 (2003),
  pp.~321--348.

\bibitem{mercier2021coarse}
{\sc V.~Mercier, S.~Gratton, and P.~Boudier}, {\em A coarse space acceleration
  of deep-{DDM}}, arXiv preprint arXiv:2112.03732,  (2021).

\bibitem{metropolis1949monte}
{\sc N.~Metropolis and S.~Ulam}, {\em The {Monte} {Carlo} method}, Journal of
  the American Statistical Association, 44 (1949), pp.~335--341.

\bibitem{muller2022error}
{\sc J.~M{\"u}ller and M.~Zeinhofer}, {\em Error estimates for the deep {R}itz
  method with boundary penalty}, in Mathematical and Scientific Machine
  Learning, PMLR, 2022, pp.~215--230.

\bibitem{na2022domain}
{\sc X.~Na and X.~Xu}, {\em Domain decomposition methods for elliptic problems
  with high contrast coefficients revisited}, arXiv preprint arXiv:2212.12216,
  (2022).

\bibitem{paszke2017automatic}
{\sc A.~Paszke, S.~Gross, S.~Chintala, G.~Chanan, E.~Yang, Z.~DeVito, Z.~Lin,
  A.~Desmaison, L.~Antiga, and A.~Lerer}, {\em Automatic differentiation in
  pytorch},  (2017).

\bibitem{quarteroni1999domain}
{\sc A.~Quarteroni and A.~Valli}, {\em Domain decomposition methods for partial
  differential equations}, Oxford University Press, 1999.

\bibitem{raissi2019physics}
{\sc M.~Raissi, P.~Perdikaris, and G.~E. Karniadakis}, {\em Physics-informed
  neural networks: A deep learning framework for solving forward and inverse
  problems involving nonlinear partial differential equations}, Journal of
  Computational physics, 378 (2019), pp.~686--707.

\bibitem{sheng2022pfnn}
{\sc H.~Sheng and C.~Yang}, {\em {PFNN}-2: A domain decomposed penalty-free
  neural network method for solving partial differential equations}, arXiv
  preprint arXiv:2205.00593,  (2022).

\bibitem{shukla2021parallel}
{\sc K.~Shukla, A.~D. Jagtap, and G.~E. Karniadakis}, {\em Parallel
  physics-informed neural networks via domain decomposition}, Journal of
  Computational Physics, 447 (2021), p.~110683.

\bibitem{sirignano2018dgm}
{\sc J.~Sirignano and K.~Spiliopoulos}, {\em {DGM}: A deep learning algorithm
  for solving partial differential equations}, Journal of Computational
  Physics, 375 (2018), pp.~1339--1364.

\bibitem{sun2022domain}
{\sc Q.~Sun, X.~Xu, and H.~Yi}, {\em Domain decomposition learning methods for
  solving elliptic problems}, arXiv preprint arXiv:2207.10358,  (2022).

\bibitem{taghibakhshi2022learning}
{\sc A.~Taghibakhshi, N.~Nytko, T.~Zaman, S.~MacLachlan, L.~Olson, and
  M.~West}, {\em Learning interface conditions in domain decomposition
  solvers}, arXiv preprint arXiv:2205.09833,  (2022).

\bibitem{toselli2004domain}
{\sc A.~Toselli and O.~Widlund}, {\em Domain decomposition methods: Algorithms
  and theory}, vol.~34, Springer Science \& Business Media, 2004.

\bibitem{tseng2022cusp}
{\sc Y.-H. Tseng, T.-S. Lin, W.-F. Hu, and M.-C. Lai}, {\em A cusp-capturing
  {PINN} for elliptic interface problems}, arXiv preprint arXiv:2210.08424,
  (2022).

\bibitem{wang2021understanding}
{\sc S.~Wang, Y.~Teng, and P.~Perdikaris}, {\em Understanding and mitigating
  gradient flow pathologies in physics-informed neural networks}, SIAM Journal
  on Scientific Computing, 43 (2021), pp.~A3055--A3081.

\bibitem{wang2020mesh}
{\sc Z.~Wang and Z.~Zhang}, {\em A mesh-free method for interface problems
  using the deep learning approach}, Journal of Computational Physics, 400
  (2020), p.~108963.

\bibitem{wu2022inn}
{\sc S.~Wu and B.~Lu}, {\em {INN}: Interfaced neural networks as an accessible
  meshless approach for solving interface pde problems}, Journal of
  Computational Physics, 470 (2022), p.~111588.

\bibitem{wu2022improved}
{\sc W.~Wu, X.~Feng, and H.~Xu}, {\em Improved deep neural networks with domain
  decomposition in solving partial differential equations}, Journal of
  Scientific Computing, 93 (2022), pp.~1--34.

\bibitem{yang1997parallel}
{\sc D.~Yang}, {\em A parallel nonoverlapping {S}chwarz domain decomposition
  method for elliptic interface problems},  (1997).

\bibitem{yang2000finite}
{\sc D.~Yang}, {\em Finite elements for elliptic problems with wild
  coefficients}, Mathematics and computers in simulation, 54 (2000),
  pp.~383--395.

\bibitem{yu2018deep}
{\sc B.~Yu et~al.}, {\em The deep {Ritz} method: a deep learning-based
  numerical algorithm for solving variational problems}, Communications in
  Mathematics and Statistics, 6 (2018), pp.~1--12.

\bibitem{zang2020weak}
{\sc Y.~Zang, G.~Bao, X.~Ye, and H.~Zhou}, {\em Weak adversarial networks for
  high-dimensional partial differential equations}, Journal of Computational
  Physics, 411 (2020), p.~109409.

\end{thebibliography}
\end{document}